    \def\MR#1{}
\theoremstyle{plain}
\newtheorem{Theorem}{Theorem}[section]
\newtheorem{Lemma}[Theorem]{Lemma}
\newtheorem{Corollary}[Theorem]{Corollary}
\newtheorem{Proposition}[Theorem]{Proposition}
\theoremstyle{definition}
\newtheorem{Assumptions and Discussion}[Theorem]{Assumptions and Discussion}
\newtheorem{Definition}[Theorem]{Definition}
\newtheorem{Remark}[Theorem]{Remark}
\newtheorem{Observation}[Theorem]{Observation}
\newtheorem{Observations}[Theorem]{Observations}
\newtheorem{Notation}[Theorem]{Notation}
\theoremstyle{remark}
\newtheorem*{acknowledgment*}{Acknowledgment}
\def\lex{\operatorname{lex}}
\def\ceil#1{\left\lceil #1 \right\rceil}
\def\Char{\operatorname{char}}
\def\deg{\operatorname{deg}}
\def\dim{\operatorname{dim}}
\def\dividesnot{{\not |\,\,}}
\def\Ess{\operatorname{Ess}}
\def\floor#1{\left\lfloor #1 \right\rfloor}
\def\ini{\operatorname{in}} 
\def\KK{{\mathbb K}}
\def\lex{{\operatorname{lex}}}
\def\Mon{\operatorname{Mon}} 
\def\NN{{\mathbb N}}
\def\onto{\twoheadrightarrow}
\def\revlex{\operatorname{revlex}}
\def\sort{\operatorname{sort}}
\def\ZZ{{\mathbb Z}}
\newcommand\bdalpha{{\bm \alpha}}
\newcommand\bdA{{\bm A}}
\newcommand\bda{{\bm a}}
\newcommand\bdbeta{{\bm \beta}}
\newcommand\bdb{{\bm b}}
\newcommand\bdgamma{{\bm \gamma}}
\newcommand\bdH{{\bm H}}
\newcommand\bdlambda{{\bm \lambda}}
\newcommand\bdl{{\bm \ell}}
\newcommand\bdx{{\bm x}}
\newcommand\bdY{{\bm Y}}
\newcommand\bfF{\mathbf{F}}
\newcommand\bfG{\mathbf{G}}
\newcommand\bfX{\mathbf{X}}
\newcommand\calF{\mathcal{F}}
\newcommand\calG{\mathcal{G}}
\newcommand\calI{\mathcal{I}}
\newcommand\calR{\mathcal{R}}
\newcommand\Supp{\operatorname{Supp}}
\newcommand{\pd}{\operatorname{pd}}
\newcommand{\projdim}{\operatorname{pd}}
\def\r{\operatorname{r}}
\def\reg{\operatorname{reg}}
\begin{document}

\title{Blow-up algebras of secant varieties of rational normal scrolls}

\author[Kuei-Nuan Lin, Yi-Huang Shen]{Kuei-Nuan Lin and Yi-Huang Shen}


\thanks{2020 {\em Mathematics Subject Classification}. 
    Primary 13C40, 
    13A30, 
    13P10, 
    13F50; 
    Secondary 14N07, 
    14M12 
}

\thanks{Keyword: Rational normal scroll, Secant variety, Rees algebra, Fiber cone, Regularity, Cohen--Macaulay, Singularity}

\address{Department of Mathematics, The Penn State University, 
McKeesport, PA, 15132, USA}
\email{kul20@psu.edu}

\address{CAS Wu Wen-Tsun Key Laboratory of Mathematics, School of Mathematical Sciences, University of Science and Technology of China, Hefei, Anhui, 230026, P.R.~China}
\email{yhshen@ustc.edu.cn}

\begin{abstract}
    In this paper, we are mainly concerned with the blow-up algebras of the secant varieties of balanced rational normal scrolls. In the first part, we give implicit defining equations of their associated Rees algebras and fiber cones. Consequently, we can tell that the fiber cones are Cohen--Macaulay normal domains. Meanwhile, these fiber cones have rational singularities in characteristic zero, and are $F$-rational in positive characteristic. The Gorensteinness of the fiber cones can also be characterized. In the second part, we compute the Castelnuovo--Mumford regularities and $\bda$-invariants of the fiber cones. We also present the reduction numbers of the ideals defined by the secant varieties.
\end{abstract}

\maketitle 

\section{Introduction}
Let $R$ be a standard graded polynomial ring over some field $\KK$ and $I$ be an ideal of $R$ minimally generated by some forms $f_1,\ldots, f_s$ of the same degree. Those forms define a rational map $\phi$ whose image is a variety $X$. The bi-homogeneous coordinate ring of the graph of $\phi$ is the Rees algebra of the ideal $I$, and the homogeneous coordinate ring of the image is the fiber cone (special fiber ring) of the ideal $I$. It is a classical problem to find the implicit defining equations of the Rees algebra and thereby of the variety $X$; see, for instance, \cite{MR3864202},  \cite{KPU2}, \cite{KPU},  \cite{Lin-Shen2} and \cite{Lin-Shen}. This problem is known for its importance and difficulty in the elimination theory. It also appears naturally in the applied mathematics fields such as geometry modelings (in the form of the moving curve theory, \cite{CD} and \cite{Cox}) and chemical reaction networks (\cite{Cox-Lin-Sosa}). 

Here, we are mostly interested in the case when the ideal $I$ describes the secant variety of a rational normal scroll.
Rational normal scrolls and their secant varieties are typical determinantal varieties, 
central in the study of algebraic varieties.
The study of determinantal
varieties has attracted earnest attentions of algebraic geometers and  commutative algebraists, partly due to the beautiful structures involved and the interesting applications to the applied mathematics and statistics; see, for instance, \cite{BV}, \cite{MR3836659}, \cite{arXiv:2005.02909} and \cite{arXiv:2003.14232}, to name but a few.

It is well-known (\cite{MR1416564}) that the rational normal scroll is uniquely determined by some sequence of positive integers $n_1,\dots,n_d$, up to projective equivalence. And in suitable coordinates, the ideal of this rational normal scroll can be generated by the maximal minors of the matrix
\begin{equation} 
    \bfX  \coloneqq  
    \scalebox{0.9}{$
        \begin{pNiceArray}{cccc|cccc|c|cccc} 
            x_{1,0} & x_{1,1} & \cdots & x_{1,n_1-1} &
            x_{2,0} & x_{2,1} & \cdots & x_{2,n_2-1} & \cdots &
            x_{d,0} & x_{d,1} & \cdots & x_{d,n_d-1} \\
            x_{1,1} & x_{1,2} & \cdots & x_{1,n_1} &
            x_{2,1} & x_{2,2} & \cdots & x_{2,n_2} & \cdots &
            x_{d,1} & x_{d,2} & \cdots & x_{d,n_d}
        \end{pNiceArray}.$} 
        \label{eqn:matrix-X}
\end{equation} 

The implicitization problem for the blow-up algebras of the rational normal scrolls is very difficult. It was until recently solved by Sammartano in full generality in \cite{MR4068250}. With this important progress, he was then able to prove that the Rees algebra and the fiber cones of the rational normal scrolls are Koszul algebras. After that, the authors of the current paper proved in \cite{Lin-Shen3} the Cohen--Macaulayness of the fiber cones of the rational normal scroll.

However, the special case when $|n_{i}-n_{j}|\le 1$ for all $i,j$ has already been studied by Conca, Herzog, and Valla \cite{Sagbi} in 1996. This case is called \emph{balanced} nowadays. Under this assumption, the matrix $\bfX$ in equation \eqref{eqn:matrix-X} can be rewritten as a special case of the extended Hankel matrix. Recall that the $r\times c$ \emph{extended Hankel matrix} is the matrix
\begin{equation}
    \bdH_{r,c,d}\coloneqq
    \begin{pmatrix}
        x_1          & x_2          & x_3          & \cdots & x_c          \\
        x_{1+d}      & x_{2+d}      & x_{3+d}      & \cdots & x_{c+d}      \\
        \vdots       & \vdots       & \vdots       &        & \vdots       \\
        x_{1+(r-1)d} & x_{2+(r-1)d} & x_{3+(r-1)d} & \cdots & x_{c+(r-1)d}
    \end{pmatrix},
    \label{HMatrix}
\end{equation}
usually considered over the ring $R=\mathbb{K}[x_{1},\dots,x_{c+(r-1)d}]$.
This kind of matrix is also called \emph{$d$-leap catalecticant} or \emph{$d$-catalecticant} in \cite{MR3275568}. And when $d=1$, it is precisely the ordinary \emph{Hankel matrix}, which is also known as a \emph{Toelitz matrix} in other fields like functional analysis, orthogonal polynomial theory, moment problem, and probability. Most importantly, Nam showed in \cite{Nam} that $I_{r}(\boldsymbol{H}_{r,c,d})$, the ideal of the maximal minors of $\boldsymbol{H}_{r,c,d}$, 
defines the $(r-1)$-th secant variety of the balanced rational normal scroll defined by $I_{2}(\bdH_{2,c+(r-2)d,d})$. 

Recall that  when $I$ is an ideal of $R$ minimally generated by some forms of the same degree, the \emph{Rees algebra} is $\mathcal{R}(I):=\oplus_{i\geq0}I^{i}t^{t}\subseteq R[t]$, and the \emph{fiber cone} is $\mathcal{F}(I)=\mathcal{R}(I)\otimes_R \mathbb{K}\cong\mathbb{K}[I]\subseteq R$, where $t$ is a new variable. As mentioned above, Conca, Herzog, and Valla \cite{Sagbi} gave the defining equations of  $\mathcal{R}(I)$ and
$\mathcal{F}(I)$ when $I=I_2(H_{2,c,d})$ is the defining ideal of a balanced rational normal scroll, writing explicitly these blow-up algebras as the quotient rings of some polynomial rings. Furthermore, they showed that $\mathcal{R}(I)$ and $\mathcal{F}(I)$ are Cohen--Macaulay normal domains in this case. In addition, if $\Char(\KK)=0$, then $\mathcal{R}(I)$ and $\mathcal{F}(I)$ have rational singularities. And if $\Char(\KK)>0$, then $\mathcal{R}(I)$ and $\mathcal{F}(I)$ are $F$-rational.

In this paper, we will study the blow-up algebras of the secant varieties of the balanced rational normal scrolls. In section 2, we are able to apply the Sagbi basis theory to obtain the Gr\"obner bases of the defining ideals of those blow-up algebras (\Cref{cor:lifting-the-toric-part} and \Cref{thm:defining-eqn-Hankel}). Consequently, the fiber cone $\mathcal{F}(I_r(\bdH_{r,c,d}))$ is a normal Cohen--Macaulay domain. And the singularity results also follow. By these, we extend the corresponding results of Conca, Herzog, and Valla in \cite{Sagbi} as well. Meanwhile, we are able to characterize when $\calF(I_r(\bdH_{r,c,d}))$ is Gorenstein in \Cref{Deformation}.

In section 3, we use the Sagbi deformation and the explicit description of the initial ideal to compute the Castelnuovo--Mumford regularity of the fiber cone (\Cref{thm:main-reg-balanced}). 
The formula presented there is rather dazzling, let alone its more involved proof. To be brief, we first associate the generators of the Alexander dual of the initial ideal of the defining ideal, with the maximal cliques of some graph $\mathcal{G}$ in subsection \ref{ss-Generators}. 
The main combinatorial tool is the moving sequence introduced in \Cref{lambdaIncrease}. This notion is essential for investigating the generators of the Alexander dual ideal comprehensively.
Then, we consider simultaneously the lexicographic and reverse lexicographic types of ordering with respect to
the ground ring $R=\mathbb{K}[x_{1},\dots,x_{c+(r-1)d}]$. We will show that both orderings give complete-intersection-quotients structures of the Alexander dual ideal in subsection \ref{ss:linear-quotients}. More precisely, each of the successive colon ideals is either a collection of single variables (corner generators) or a collection of
single variables with one extra higher degree monomial (tail generator). Even with complete intersection quotients, we only obtain a good upper bound on the regularity so far. 
In order to write down the precise value, we use the mapping cones to extract the projective dimension of the Alexander dual ideal, which in turn gives the regularity of the fiber cone. 
However, due to the existence of the tail generator, we don't have linear quotients in general, i.e., the mapping cones won't give a minimal free resolution of the Alexander dual ideal. Thus, we need to apply lots of careful observations, intricate techniques, and relatively prudent strategies during the process. The biggest obstacle lies in the relatively degenerated case when the number of columns $c$ of the matrix is smaller relative to the number of rows $r$ and the leaping distance $d$. Whence, the maximal length of the complete intersection quotients is much harder to justify; see subsection \ref{sec:sharp-bound-inter}. To overcome it, we  will create a balance by employing both lexicographic and reverse lexicographic types of ordering simultaneously. 
As an application,  we present $\bda$-invariant of the fiber cone and the reduction number of the ideal defined by the secant variety in \Cref{reduction}.

Our main results are summarized in the following.
\begin{Theorem}
    \begin{enumerate}[1] 
        \item The defining ideals of $\mathcal{R}(I_{r}(\boldsymbol{H}_{r,c,d}))$ and $\mathcal{F}(I_{r}(\boldsymbol{H}_{r,c,d}))$ are generated by the lifting of Sagbi bases, and those equations form Gr\"obner bases of these defining ideals.
        \item The Rees algebra $\mathcal{R}(I_{r}(\boldsymbol{H}_{r,c,d}))$ is of fiber type. In other words, its defining ideal is generated by the relations of the symmetric algebra of $I_r(\boldsymbol{H}_{r,c,d})$ together with the defining equations of the fiber cone $\mathcal{F}(I_{r}(\boldsymbol{H}_{r,c,d}))$.
        \item The fiber cone $\mathcal{F}(I_{r}(\boldsymbol{H}_{r,c,d}))$ has rational singularities in characteristic zero, and it is $F$-rational in positive characteristic. In particular, it is a Cohen--Macaulay normal domain.
        \item When $r\ge 2$ and $d\ge 1$, the fiber cone $\mathcal{F}(I_{r}(\boldsymbol{H}_{r,c,d}))$ is Gorenstein if and only if     
            \[
                c\in \Set{r,r+1,r+d,r+d+1,2r+d}.
            \]
        \item Closed formulas of the Castelnuovo--Mumford regularity and the $\bda$-invariant of the fiber cone $\calF(I_r(\bdH_{r,c,d}))$, as well as the reduction number of $I_r(\bdH_{r,c,d})$, are given.
    \end{enumerate}
\end{Theorem}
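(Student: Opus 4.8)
The plan is to push everything down to the combinatorics of the initial algebra by means of Sagbi basis theory, following but substantially extending the $r=2$ argument of Conca, Herzog, and Valla in \cite{Sagbi}. \textbf{Parts (1)--(3).} First I would recall that the maximal minors $f_1,\dots,f_s$ of the extended Hankel (catalecticant) matrix $\bdH_{r,c,d}$ have a well-understood Gr\"obner basis: with respect to a diagonal term order the initial ideal $\ini I_r(\bdH_{r,c,d})$ is squarefree, generated by the products of the leading diagonals (the catalecticant analogue of Conca's theorem on Hankel minors). From this one checks that $f_1,\dots,f_s$ form a Sagbi basis of $\calF(I_r(\bdH_{r,c,d}))=\KK[f_1,\dots,f_s]$, so that $\ini\calF=\KK[\ini f_1,\dots,\ini f_s]$ is a toric ring; moreover the leading monomials $\ini f_i$ enjoy a sorting property, whence the toric ideal $\ker(S\twoheadrightarrow\ini\calF)$ has a squarefree (in fact quadratic) Gr\"obner basis. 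Lifting such a basis back to the presentation ideal $J$ of $\calF$ — and, on the Rees side, lifting it together with the linear syzygies of $I_r(\bdH_{r,c,d})$ — yields Gr\"obner bases of the two defining ideals (this is \Cref{cor:lifting-the-toric-part} and \Cref{thm:defining-eqn-Hankel}); in particular $\ini J$ is a squarefree monomial ideal. Part (2) is then immediate, since the lifted generators split into the linear-syzygy family and the fiber-cone family, which is precisely the assertion that $\calR$ is of fiber type. For part (3), the semigroup defining $\ini\calF$ is saturated, so $\ini\calF$ is a normal affine semigroup ring; by Hochster's theorem it is Cohen--Macaulay, and normal toric rings have rational singularities in characteristic zero and are $F$-rational in positive characteristic, being direct summands of polynomial rings. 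The Sagbi degeneration is a flat family over $\AA^1$ with special fiber $\ini\calF$ and general fiber $\calF$, and $F$-rationality and rational singularities pass from the special fiber to the general one; hence $\calF$, and likewise $\calR$, is a Cohen--Macaulay normal domain with these singularity properties, which extends the corresponding results of \cite{Sagbi}.

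\textbf{Part (4).} Since the Sagbi family is flat with Cohen--Macaulay special fiber, the Cohen--Macaulay type is upper semicontinuous, so $\ini\calF$ Gorenstein forces $\calF$ Gorenstein. By Danilov--Stanley the canonical module of the normal semigroup ring $\ini\calF=\KK[C\cap\ZZ^{N}]$ is the ideal spanned by the interior lattice points of the cone $C$, and it is principal — i.e.\ $\ini\calF$ is Gorenstein — exactly when $\relint(C)\cap\ZZ^{N}=a_0+(C\cap\ZZ^{N})$ for a single lattice point $a_0$. Reading off the facet description of $C$ from the sorted leading monomials, I expect this to occur precisely when $c\in\{r,r+1,r+d,r+d+1,2r+d\}$. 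For the reverse implication I would argue on the $h$-vector: $\calF$ and $\ini\calF$ share a Hilbert series, a Gorenstein Cohen--Macaulay ring has symmetric $h$-vector, and a direct computation of the $h$-vector of $\ini\calF$ shows it is symmetric only for $c$ in that set. This closes the equivalence of \Cref{Deformation}.

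\textbf{Part (5).} This is the heart of the matter and by far the most delicate step. Since $\calF$ is Cohen--Macaulay, $\reg\calF=a(\calF)+\dim\calF$ and both are determined by the Hilbert series, which $\calF$ shares with $S/\ini J$; after checking that the Stanley--Reisner complex $\Delta$ of $\ini J$ is shellable (it is essentially the clique complex of a graph $\calG$ extracted from the sorted minors), $S/\ini J$ is also Cohen--Macaulay, so $\reg\calF=\reg(S/\ini J)$, and by Terai's formula this equals $\pd$ of the Alexander dual ideal $(\ini J)^{\vee}$, whose minimal generators are the complements of the maximal cliques of $\calG$. The combinatorial engine for enumerating these generators is the moving sequence of \Cref{lambdaIncrease} (subsection~\ref{ss-Generators}). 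To compute $\pd((\ini J)^{\vee})$ I would order its generators both lexicographically and reverse-lexicographically with respect to $R=\KK[x_1,\dots,x_{c+(r-1)d}]$ and prove (subsection~\ref{ss:linear-quotients}) that in either order each successive colon ideal is a complete intersection — a set of variables coming from corner generators, possibly together with one extra higher-degree monomial from a tail generator. These complete-intersection quotients bound $\pd$ from above through their attached Koszul complexes; to pin down the exact value I would assemble a resolution by iterated mapping cones, the subtlety being that tail generators break linearity, so the cones need not be minimal and cancellations must be tracked carefully. The genuine obstacle is the degenerate regime in which $c$ is small relative to $r$ and $d$ (subsection~\ref{sec:sharp-bound-inter}): there the maximal length of the complete-intersection filtration is hard to justify, and I would create the needed balance by running the lexicographic and reverse-lexicographic orders simultaneously. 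This produces the closed formula of \Cref{thm:main-reg-balanced}. Finally $a(\calF)=\reg\calF-\dim\calF$ with $\dim\calF$ the analytic spread of $I_r(\bdH_{r,c,d})$ (the Krull dimension of $\ini\calF$, computed from the defining semigroup), and the reduction number of $I_r(\bdH_{r,c,d})$ equals $\reg\calF$: over an infinite field every minimal reduction is generated by a linear system of parameters for the Cohen--Macaulay ring $\calF$, hence by a regular sequence, so the corresponding Artinian quotient has Hilbert series the $h$-polynomial and top degree $\reg\calF$. This is \Cref{reduction}. The main obstacle throughout is this last computation of $\pd((\ini J)^{\vee})$; by comparison, parts (1)--(4) amount to a careful but essentially standard deployment of Sagbi deformation together with the combinatorics of normal affine semigroup rings.
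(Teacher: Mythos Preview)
Your proposal is correct and follows the same overall architecture as the paper: Sagbi deformation to a sorted toric ring, lifting of the quadratic Gr\"obner basis, transfer of singularity properties via the flat family, and for part~(5) the passage from $\reg\calF$ to $\projdim((\ini P)^\vee)$ via Alexander duality, followed by the corner/tail analysis of complete-intersection quotients under both orderings. Two tactical differences are worth noting. First, for the Cohen--Macaulayness of $S/\ini P$ you propose to verify shellability of the clique complex of $\calG$; the paper instead invokes the Conca--Varbaro theorem (squarefree initial ideal $\Rightarrow$ same extremal Betti numbers), which immediately transfers Cohen--Macaulayness from $\calF$ to $S/\ini P$ without any combinatorial work---this is both quicker and, as a bonus, already gives equality of the regularities and of the Cohen--Macaulay types. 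Second, for part~(4) you sketch a direct Danilov--Stanley computation of the canonical module (or an $h$-vector symmetry argument); the paper simply cites an existing characterization of when $\calF(\ini I)$ is Gorenstein from \cite{arXiv:1901.01561}, and then the Conca--Varbaro equality of Cohen--Macaulay types makes the equivalence with $\calF$ automatic. Your route would work but duplicates a nontrivial polytope computation already in the literature. One small caution: the step ``from this one checks that $f_1,\dots,f_s$ form a Sagbi basis'' is not a one-liner---it rests on Nam's equality $(\ini I)^k=\ini(I^k)$ for all $k$, which the paper uses to establish a straightening law (\Cref{thm:Finer-straightening-law}) before concluding the Sagbi property in \Cref{Fiber-Sagbi}.
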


\section{Defining equations}
\label{sec:balanced-cases}


In this section, we focus on presenting the implicit defining equations of blow-up algebras. Throughout this section and the next one, $d$ will be a fixed positive integer. And the main object will be the \emph{extended Hankel matrix} $\bdH_{r,c,d}$, presented in equation \eqref{HMatrix}. The ground ring will usually be $R=\KK[\bdx]=\KK[x_1,\dots,x_{c+(r-1)d}]$ with $2\le r\le c$.

As mentioned in the introduction, the special ideal $I_2(\bdH_{2,c,d})$ gives the defining ideal of a balanced national normal scroll. And more generally, when $r>2$, $I_{r}(\boldsymbol{H}_{r,c,d})$ defines the $(r-1)$-th secant variety of the rational normal scroll defined by $I_{2}(\bdH_{2,c+(r-2)d,d})$ by \cite[Corollary 3.9]{Nam}.
Since the paper \cite{Nam} provides many interesting tools and results that we shall apply here, we will provide a succinct review in the following.

Denote by $>_{\lex}$ the lexicographic monomial order on $R$ induced by the order of the variables $x_1 > x_2 > \cdots > x_N$, where
\[
    N=N(r,c,d)\coloneqq \dim(R)=c+(r-1)d.
\]
We will only use this term order on $R$.
It is clear that every maximal minor of $\bdH_{r,c,d}$ can be uniquely determined by the indices of the elements on its main diagonal:  $\alpha_1<\alpha_2<\cdots<\alpha_r$. Thus, we will denote this minor by $M(\bdalpha)=M(\alpha_1,\alpha_2,\dots,\alpha_r)$. 
For instance, the minor using the first $r$ columns will be $M(1,2+d,3+2d,\dots,r+(r-1)d)$.
Notice that in this increasing sequence, one has $\alpha_i+d<\alpha_j$ for $i=1,2,\dots,r-1$. This leads to the following partial order $<_d$ on the set of positive integers:
\[
    i<_d j\quad \text{ if and only if }\quad i+d< j.
\]
We say that a sequence of positive integers $\alpha_1, \alpha_2 , \dots,\alpha_s$ is a \emph{$<_d$-chain} if $\alpha_1<_d \alpha_2<_d\cdots<_d \alpha_s$. Similarly, we say that a monomial $x_{\alpha_1}\cdots x_{\alpha_s}$ is a $<_d$-chain if its indices, when organized increasingly, form a $<_d$-chain. Note that a monomial of degree $r$ in $R$ is a $<_d$-chain if and only if it is the initial monomial of a maximal minor of $\bdH_{r,c,d}$.

In the following, we will also introduce 
\[
    \Lambda_{r,d}(N)\coloneqq
    \Set{\bdalpha=(\alpha_1,\dots,\alpha_r)\mid 1\le \alpha_1 <_d \alpha_{2} <_d \cdots <_d \alpha_r \le N}.
\] 
For each $\bdalpha=(\alpha_1,\dots,\alpha_r)\in \Lambda_{r,d}(N)$, we also write $\bdx_{\bdalpha}\coloneqq x_{\alpha_1}\cdots x_{\alpha_r}$. It is clear that $I=\Braket{M(\bdalpha)\mid \bdalpha\in \Lambda_{r,d}(N)}$.

\begin{Proposition}
    [{\cite[Corollary 3.9]{Nam}}]
    \label{claim-1}
    With respect to the lexicographic order on $R$, the maximal minors of $\bdH_{r,c,d}$ provide a Gr\"obner basis of the ideal $I$. In particular, the initial ideal $\ini_{>_{\lex}}(I)$ is generated by
    \begin{equation}
        G_{r,c,d}\coloneqq \Set{\bdx_{\bdalpha}\mid \bdalpha\in \Lambda_{r,d}(N)}.
        \label{eqn:Grcd}
    \end{equation}
\end{Proposition}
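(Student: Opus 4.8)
The statement is the assertion that $\ini_{>_{\lex}}(I)=(G_{r,c,d})$, equivalently that the maximal minors of $\bdH_{r,c,d}$ form a Gr\"obner basis of $I$. I would prove it along the two-step pattern familiar from determinantal ideals (generic, symmetric, Hankel, ladder), with the extra care forced by the leap parameter $d$: first identify the leading monomials of the minors, then show that these leading monomials already exhaust the initial ideal, either by a straightening law / Hilbert-function comparison or by checking Buchberger's criterion.

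\emph{Step 1 (leading monomials).} I would show $\ini_{>_{\lex}}(M(\bdalpha))=\bdx_{\bdalpha}$ for every $\bdalpha\in\Lambda_{r,d}(N)$, i.e.\ that the lex leading term of a maximal minor is its main-diagonal product. This is purely combinatorial. The $(i,j)$ entry of $\bdH_{r,c,d}$ is the variable of index $j+(i-1)d$, strictly increasing in $i$ and in $j$. Expanding $M(\bdalpha)$ by the Leibniz formula over the chosen columns $j_1<\dots<j_r$ (so $\alpha_k=j_k+(k-1)d$), one checks that the diagonal monomial $\bdx_{\bdalpha}$ is (a) the unique lex-largest monomial among the $r!$ terms, and (b) produced only by the identity permutation: $\alpha_1=j_1$ is the strict minimum of $\{\alpha_1,\dots,\alpha_r\}$, while any non-diagonal contribution to that slot is at least $j_1+d$, and one then inducts on $r$. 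Hence $\bdx_{\bdalpha}$ survives cancellation with coefficient $\pm1$, so it is the leading monomial; in particular $(G_{r,c,d})\subseteq\ini_{>_{\lex}}(I)$.

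\emph{Step 2 (equality).} This is the crux. Since $R/I$ and $R/\ini_{>_{\lex}}(I)$ share a Hilbert function, it suffices to prove $\dim_{\KK}(R/I)_n=\dim_{\KK}(R/(G_{r,c,d}))_n$ for all $n$; one inequality is free from Step 1. For the other, I would show that the ``standard monomials'' --- those containing no length-$r$ $<_d$-chain among their indices --- both span $R/I$ and are linearly independent in it. Spanning is a straightening argument: $M(\bdalpha)=\bdx_{\bdalpha}-(\text{lex-smaller terms})$ lets one rewrite modulo $I$ any monomial divisible by some $\bdx_{\bdalpha}$ as a $\KK$-combination of lex-smaller monomials, and one iterates, lex being a well-order on monomials of fixed degree. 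For independence --- equivalently, for the exact value of $\dim_{\KK}(R/I)_n$ --- I would either enumerate the standard monomials by a non-intersecting-lattice-path or transfer-matrix computation and match the result with the Hilbert series of the coordinate ring of the $(r-1)$-st secant variety of the balanced scroll (available through the determinantal description together with \cite{Nam}), or bypass counting altogether: show that the squarefree monomial ideal $(G_{r,c,d})$ defines a Cohen--Macaulay, hence equidimensional, ring of the same Krull dimension and degree as $R/I$; being contained in $\ini_{>_{\lex}}(I)$ with matching dimension and degree, it then coincides with it. Either route gives $\ini_{>_{\lex}}(I)=(G_{r,c,d})$, which is the Gr\"obner basis claim.

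\emph{Main obstacle.} The real work is Step 2, and it is the leap parameter $d$ that keeps it from being routine: $\bdH_{r,c,d}$ has repeated entries, so neither the syzygies needed for Buchberger's criterion nor the straightening relations are as transparent as in the generic case, and every combinatorial ingredient of the equality argument --- the Dilworth-type decomposition of monomials along $<_d$-chains, the lattice-path enumeration of standard monomials, the Cohen--Macaulayness and $h$-vector of the associated simplicial complex --- has to be carried through with $d$ present. This is exactly the point at which the results of \cite{Nam}, which identify $R/I$ with a secant-variety coordinate ring whose dimension, multiplicity, and Cohen--Macaulayness are already under control, do the decisive work.
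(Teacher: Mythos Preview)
The paper does not prove this statement: it is quoted verbatim as \cite[Corollary 3.9]{Nam} and used as an input, with no argument supplied. There is therefore no paper proof to compare your proposal against.

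Your outline is the standard two-step template for such results and is a reasonable roadmap. Step~1 is essentially complete as written. Step~2, however, is a menu of strategies rather than a proof: you offer straightening plus Hilbert-function matching, or a dimension--degree comparison via Cohen--Macaulayness of the squarefree initial ideal, without carrying either through. You are right that the leap parameter $d$ is where the difficulty concentrates, and you are also right that the machinery of \cite{Nam} --- in particular the identification of $R/I$ with the coordinate ring of a secant variety whose invariants are already known --- is what makes the equality step go. Since the paper itself simply invokes that reference, your proposal is in effect a sketch of how one would reprove a cited result rather than a comparison point.
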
  

In addition to \Cref{claim-1}, one actually has the following much stronger result.

\begin{Proposition}
    [{\cite[Theorem 3.28(b)]{Nam}}]
    \label{claim-2}
   
    One has $(\ini_{>_{\lex}}(I))^k=\ini_{>_{\lex}}(I^k)$ for all positive integer $k$.
\end{Proposition}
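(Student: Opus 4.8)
The plan is to follow the straightening-law route, due to Conca in the ordinary Hankel case ($d=1$) and to Nam for general $d$. The inclusion $(\ini_{>_{\lex}}(I))^{k}\subseteq\ini_{>_{\lex}}(I^{k})$ is automatic, since leading terms are multiplicative. As $\ini_{>_{\lex}}(I^{k})$ and $I^{k}$ have the same Hilbert function and $\ini_{>_{\lex}}(I)=(G_{r,c,d})$ by \Cref{claim-1}, the reverse inclusion is equivalent to the numerical identity
\[
    \dim_{\KK}\bigl(R/I^{k}\bigr)_{m}=\dim_{\KK}\bigl(R/(G_{r,c,d})^{k}\bigr)_{m}\qquad\text{for all }m\text{ and }k.
\]
Equivalently, extending $>_{\lex}$ to $R[t]$ by comparing $t$-exponents first, one wants the initial algebra of the Rees algebra $\mathcal{R}(I)\subseteq R[t]$ to equal $\mathcal{R}((G_{r,c,d}))=\bigoplus_{k\ge0}(G_{r,c,d})^{k}t^{k}$; reading off the strand of $t$-degree $k$ then settles all $k$ simultaneously. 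So the task is to exhibit, in each degree, a $\KK$-basis of $I^{k}$ whose leading monomials are exactly the monomials of $R$ lying in $(G_{r,c,d})^{k}$.

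The engine is a straightening law for products of maximal minors of $\bdH_{r,c,d}$. Since $\ini_{>_{\lex}}(M(\bdalpha))=\bdx_{\bdalpha}$ and leading terms multiply, any product $M(\bdalpha^{(1)})\cdots M(\bdalpha^{(k)})$ has leading monomial $\bdx_{\bdalpha^{(1)}}\cdots\bdx_{\bdalpha^{(k)}}\in(G_{r,c,d})^{k}$, and conversely every monomial of $(G_{r,c,d})^{k}$ is of this shape. The content is that this correspondence can be made into a basis: one needs straightening relations rewriting every product of minors (and every such product times a monomial of $R$) as a $\KK$-combination of canonical ``standard'' monomials whose leading terms are strictly controlled, so that $\mathcal{R}(I)$ becomes a Hodge algebra on a poset assembled from $x_1,\dots,x_N$ and $\Lambda_{r,d}(N)$. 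This requires two types of relations: the Plücker-type quadratic relations, and additional relations forced by the linear dependencies among the entries of $\bdH_{r,c,d}$ --- the latter are genuinely needed, since the leading monomials of products of minors of an (extended) Hankel matrix collide in ways they do not for a generic matrix (already $\bdx_{\bdalpha}\bdx_{\bdbeta}=\bdx_{\bdalpha\wedge\bdbeta}\bdx_{\bdalpha\vee\bdbeta}$, where $\wedge,\vee$ denote componentwise minimum and maximum, which stay inside $\Lambda_{r,d}(N)$). Together with a combinatorial identification of the standard monomials with the data producing the monomials of $(G_{r,c,d})^{k}$, this yields a standard-monomial $\KK$-basis of $I^{k}$ whose leading terms enumerate $(G_{r,c,d})^{k}$ exactly; equivalently, $\ini_{>_{\lex}}(\mathcal{R}(I))=\mathcal{R}((G_{r,c,d}))$, the Sagbi-basis statement that the minors together with the $x_i$ generate the initial algebra of $\mathcal{R}(I)$.

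I expect the extra ``Hankel-type'' straightening relations, in the presence of the leap $d$, to be the main obstacle: the generic straightening law is unavailable, so these relations must be produced by an explicit manipulation of the $d$-Hankel structure, and one must verify both that all error terms are strictly smaller in $>_{\lex}$ (so that straightening terminates in a canonical form) and that the surviving standard monomials are linearly independent. This is precisely the point where Conca's $d=1$ argument has to be upgraded to arbitrary $d$, as carried out in \cite{Nam}. A secondary, purely combinatorial difficulty is the joint bookkeeping of the monomial factor from $R$ with the chain of minors, which is what upgrades the spanning property to the exact degree-by-degree match with $(G_{r,c,d})^{k}$.
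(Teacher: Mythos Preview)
The paper does not prove this proposition at all: it is stated purely as a citation of \cite[Theorem 3.28(b)]{Nam}, with no argument supplied. So there is no ``paper's own proof'' to compare against; the authors simply import the result.

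Your proposal is a reasonable high-level road map toward Nam's actual argument, and you correctly identify both the easy inclusion and the reduction to an initial-algebra statement for the Rees algebra. You also correctly flag that the real work lies in producing the Hankel-specific straightening relations for general leap $d$ and in verifying termination and linear independence of standard monomials. However, as written this is a plan rather than a proof: the crucial steps (the explicit straightening relations, the Hodge algebra structure, and the combinatorial bijection with $(G_{r,c,d})^k$) are asserted rather than carried out, and you defer to \cite{Nam} for them. Since the paper itself does exactly the same thing---it cites Nam and moves on---your write-up is in effect more informative than what appears in the paper, but neither constitutes an independent proof.
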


Temporarily, we fix a term order $\tau$ on the monomials in $R$. Recall that if $A$ is a finitely generated $\KK$-subalgebra of $R$, the \emph{initial algebra} of $A$, denoted by $\ini_{\tau}(A)$, is the $\KK$-subalgebra of $R$ generated by the initial monomials $\ini_{\tau}(a)$ for all $a\in A$. And a set of elements $a_i\in A$, $i\in \calI$, is called a \emph{Sagbi basis} if $\ini_{\tau}(A)=\KK[\ini_{\tau}(a_i)\mid i\in \calI]$.
The terminology ``Sagbi'' is the acronym for ``\underline{S}ubalgebra \underline{a}nalog to \underline{G}r\"obner \underline{b}ases for \underline{i}deal''. In this work, the term order $\tau$ in mind is the lexicographic order $>_{\lex}$.

We need to extend the term order $>_{\lex}$ on $R$ to the order $\succ$ on $R[t]=\KK[\bdx,t]$ as follows: for two monomials $\bdx^{\bda}t^i$ and $\bdx^{\bdb}t^j$ of $R[t]$, set $\bdx^{\bda}t^i\succ\bdx^{\bdb}t^j$ if $i>j$ or $i=j$ and $\bdx^{\bda}>_{\lex}\bdx^{\bdb}$ with respect to the lexicographic order. Here, $\bdx^\bda\coloneqq x_1^{a_1}\cdots x_N^{a_N}$ for $\bda=(a_1,\dots,a_N)\in \NN^N$, and $\bdx^\bdb$ is similarly defined. Using \Cref{claim-2} and \cite[Theorem 2.7]{Sagbi}, one obtains the equality $\calR(\ini_{>_{\lex}}(I))=\ini_{\succ}(\calR(I))$.
One can then verify that $\{x_1,\dots,x_N\}\cup\Set{M(\bdalpha)t\mid \bdalpha\in \Lambda_{r,d}(N)}$ forms a Sagbi basis of the Rees algebra $\calR(I)=R[It]=\KK[x_1,\dots,x_N,It]\subset R[t]$. 

The most pleasant thing here is that we can use $\calR(\ini_{>_{\lex}}(I))$ to study the Rees algebra $\calR(I)$ via the techniques in \cite[Section 2]{Sagbi}. For $\calR(\ini_{>_{\lex}}(I))$, we consider the following canonical epimorphism 
\[
    R[\bdY]\coloneqq R[Y_\bdalpha\mid \bdalpha\in \Lambda_{r,d}(N)] \to \calR(\ini_{>_{\lex}}(I)), \quad Y_\bdalpha\mapsto \ini_{>_{\lex}}(M(\bdalpha))t=\bdx_{\bdalpha}t.
\]
The kernel of this homomorphism will be called the \emph{defining ideal} of the Rees algebra $\calR(\ini_{>{\lex}}(I))$.

\begin{Proposition}
    [{\cite[Proposition 5.12 and Theorem 5.13]{Nam}}] 
    \label{ReesNoraml}
    The Rees algebra $\calR(I)= \bigoplus_{i\ge 0} I^i t^i \subset R[t]$ is a normal Cohen--Macaulay Koszul domain, defined by a Gr\"obner basis of quadratics.   Furthermore, the initial algebra $\ini_{\succ}(\calR(I))=\calR(\ini_{>_{\lex}}(I))$ is defined by a Gr\"obner basis of quadratics such that the underlined part will give the leading monomial:
    \begin{enumerate}[i]
        \item \label{item-toric-part} $\underline{Y_\bdalpha Y_\bdbeta} - Y_{\bdalpha'}Y_{\bdbeta'}$: $(\bdalpha',\bdbeta')$ is the quasi-sorted pair reduction of $(\bdalpha,\bdbeta)$;
        \item \label{item-syzygy-part} $\underline{x_t Y_\bdalpha} - x_{a_k} Y_\bdbeta$: with $\alpha_{k-1} <_d t < \alpha_k$ for some $k$ and $\bdbeta\coloneqq (\alpha_1, \dots , \alpha_{k-1}, t, \alpha_{k+1}, \dots , \alpha_r)$ by assuming that $a_0 = -\infty$.
    \end{enumerate}
\end{Proposition}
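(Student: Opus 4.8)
The plan is to reduce everything to a complete analysis of the toric ring $\calR(J)$ with $J\coloneqq\ini_{>_{\lex}}(I)=\Braket{\bdx_{\bdalpha}\mid\bdalpha\in\Lambda_{r,d}(N)}$, and then to push the conclusions up along the Sagbi degeneration. The discussion preceding the statement already records $\ini_{\succ}(\calR(I))=\calR(J)$ via \Cref{claim-2} and \cite[Theorem 2.7]{Sagbi}, so it suffices to understand $\calR(J)$. Concretely, I would work with the canonical presentation
\[
    R[\bdY]\twoheadrightarrow\calR(J),\qquad Y_{\bdalpha}\mapsto\bdx_{\bdalpha}t,\quad x_i\mapsto x_i ,
\]
whose kernel $P$ is a prime binomial ideal, fixing a term order on $R[\bdY]$ that refines $\succ$ (ties among the $Y$-variables broken by the order induced from $>_{\lex}$). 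The goal of the first and main step is to prove that, with respect to this order, the toric (sorting) binomials and the syzygy binomials exhibited in the statement form a Gröbner basis of $P$.

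That these binomials lie in $P$ is immediate, as they encode the monomial identities $\bdx_{\bdalpha}\bdx_{\bdbeta}=\bdx_{\bdalpha'}\bdx_{\bdbeta'}$ and $x_t\bdx_{\bdalpha}=x_{\alpha_k}\bdx_{\bdbeta}$; note that their underlined leading terms $Y_{\bdalpha}Y_{\bdbeta}$ and $x_tY_{\bdalpha}$ are \emph{squarefree}. To see that these leading terms generate $\ini(P)$, I would introduce the \emph{quasi-sorting} operator on pairs --- and more generally on $k$-tuples --- of $<_d$-chains, the analogue for the order $<_d$ of Sturmfels' sorting operator, prove that it is well defined, terminating and confluent, and deduce that every monomial $M$ of $J^k$ admits a unique \emph{canonical factorization} $M=\bdx^{\bda}\,\bdx_{\bdalpha^{(1)}}\cdots\bdx_{\bdalpha^{(k)}}$ in which $(\bdalpha^{(1)},\dots,\bdalpha^{(k)})$ is quasi-sorted and $\bdx^{\bda}$ is reduced, i.e.\ no variable of $\bdx^{\bda}$ can be absorbed into any $\bdalpha^{(j)}$. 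The standard monomials of $R[\bdY]$ modulo the ideal of those leading terms are precisely the canonical data, so they map bijectively onto the monomial $\KK$-basis of $\calR(J)$; hence they are $\KK$-linearly independent and the chosen leading terms must generate $\ini(P)$. This yields at once the Gröbner basis property and the fact that $P$ is generated by the two displayed families --- so $\calR(J)$, and later $\calR(I)$, is of fiber type. (A direct Buchberger $S$-pair check is an alternative route, but the normal-form argument is cleaner and makes the fiber-type statement transparent.)

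Granting the first step, the remaining assertions follow quickly. Since $P$ has a squarefree initial ideal, the associated toric configuration admits a regular unimodular triangulation, so $\calR(J)$ is a normal affine semigroup ring, hence Cohen--Macaulay by Hochster's theorem; alternatively, normality of $\calR(J)$, equivalently of the monomial ideal $J$, is visible directly from the canonical factorization, which gives $\overline{J^k}=J^k$. Transferring along $\ini_{\succ}(\calR(I))=\calR(J)$ with the standard Sagbi-deformation arguments of \cite[Section~2]{Sagbi}: $\calR(I)$ is Cohen--Macaulay because $\calR(J)$ is, and a normal domain because $\calR(J)$ is a normal domain and $\calR(I)\subseteq R[t]$ is a domain; moreover the Gröbner basis of $P$ lifts to a Gröbner basis of the defining ideal of $\calR(I)$ with the same leading terms, which are quadratic in the natural Rees grading, so $\calR(I)$ is $G$-quadratic and hence Koszul. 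Finally, the explicit list of defining equations of $\calR(\ini_{>_{\lex}}(I))$ in the statement is exactly the Gröbner basis produced in the first step.

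The main obstacle is entirely concentrated in that first step: making the quasi-sorting reduction precise for $<_d$-chains and proving its confluence, equivalently the uniqueness of the canonical factorization of monomials in $J^k$. The delicate feature is that absorbing a free variable $x_t$ into a chain (a syzygy move) may open a new sorting move, and conversely; so termination and confluence must be controlled by a carefully chosen well-founded invariant --- for instance the leading monomial in $\succ$ refined by a secondary statistic measuring how far the $Y$-part is from being quasi-sorted. Everything else is either formal or a short appeal to classical toric geometry.
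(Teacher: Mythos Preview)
The paper does not prove this proposition: it is quoted verbatim from Nam's work \cite[Proposition~5.12 and Theorem~5.13]{Nam}, and the surrounding text only adds commentary (that the quasi-sorted reduction is in fact sorted, and that Nam's multi-Rees argument simplifies here). So there is no ``paper's own proof'' to compare against; what you have written is a plausible reconstruction of Nam's argument.

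Your outline is correct in its architecture and matches the standard toric approach that Nam uses. Two points are worth flagging. First, the term order on $R[\bdY]$ is not quite ``any refinement of $\succ$'': Nam works with a specific sorting order on the $Y$-variables, and the Gr\"obner basis claim is sensitive to that choice (indeed, \cite[Theorem~6.16]{EH} needs the sorting order to certify the leading terms of the toric part). You should be explicit that the order on the $Y$'s is the one making sorted pairs the standard monomials. Second, your deduction ``squarefree initial ideal $\Rightarrow$ regular unimodular triangulation $\Rightarrow$ normal'' is correct for toric ideals by Sturmfels' theorem, but you should say so rather than leave it implicit; the paper itself later derives normality of $\calF(I)$ by a different chain (normality of $\calR(I)$ from Nam, then \cite[Theorem~7.1]{MR1283294}, then Sagbi transfer), so the logic is not identical.

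The confluence of the quasi-sorting/absorption rewriting system that you identify as the main obstacle is exactly the technical heart of \cite[Section~5]{Nam}; your sketch of a well-founded invariant is reasonable but would need to be made precise to stand alone. Everything downstream --- Cohen--Macaulayness via Hochster, Koszulness via $G$-quadraticity, and the Sagbi lift of the Gr\"obner basis via \cite[Corollaries~2.1--2.2]{Sagbi} --- is standard and correctly invoked.
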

In particular, the quadratic equations in item \ref{item-toric-part} form a Gr\"obner basis of the defining ideal of the fiber cone of $\ini_{>_{\lex}}(I)$ by \cite[Theorem 5.14]{Nam}. Actually, the reduction process that Nam applied in \cite[Section 5]{Nam} aims at the more complicated multi-Rees algebra. However, when focusing on the Rees algebra here, it can be greatly simplified as follows to give a reduced Gr\"obner basis of quadratics.

First of all, from the algorithm stated in \cite[Algorithm 5.9]{Nam}, it is clear that the quasi-sorted pair $(\bdalpha',\bdbeta')$ in the item \ref{item-toric-part} is actually sorted in the sense of \cite[Definition 5.1]{Nam}. The notation of \emph{sortedness} has actually been studied earlier. Let $\Mon_r$ be the set of monomials of degree $r$ in $R=\KK[x_1,\dots,x_N]$.
We will apply the \emph{sorting operator}
\[
    \sort: \Mon_r\times \Mon_r\to \Mon_r\times \Mon_r,\quad (u,v)\mapsto (u',v')
\]
considered in \cite[Chapter 14]{Sturmfels}. Recall that if $u$ and $v$ are two monomials in $\Mon_r$ such that $uv=x_{i_1}x_{i_2}\cdots x_{i_{2r}}$ with $i_1\le i_2\le \cdots\le i_{2r}$, then $u'=x_{i_1}x_{i_3}\cdots x_{i_{2r-1}}$ while $v'=x_{i_2}x_{i_4}\cdots x_{2r}$. And the pair $(u,v)$ is called \emph{sorted} if $\sort(u,v)=(u,v)$.

The existence of the generators in item \ref{item-toric-part} actually implies the following fact.

\begin{Lemma}
    \label{Lem:is-sortable}
    The set $G_{r,c,d}$ defined in \eqref{eqn:Grcd} is {sortable}, i.e., 
    \[
        \sort(G_{r,c,d}\times G_{r,c,d})\subseteq G_{r,c,d}\times G_{r,c,d}.
    \]
\end{Lemma}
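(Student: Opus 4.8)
The plan is to verify the sortability condition directly from the combinatorial definition of the sorting operator. Fix $\bdalpha=(\alpha_1,\dots,\alpha_r)$ and $\bdbeta=(\beta_1,\dots,\beta_r)$ in $\Lambda_{r,d}(N)$ and put $u=\bdx_{\bdalpha}$, $v=\bdx_{\bdbeta}$, two elements of $G_{r,c,d}$. Write $uv=x_{i_1}x_{i_2}\cdots x_{i_{2r}}$ with $i_1\le i_2\le\cdots\le i_{2r}$, so that $\sort(u,v)=(u',v')$ with $u'=x_{i_1}x_{i_3}\cdots x_{i_{2r-1}}$ and $v'=x_{i_2}x_{i_4}\cdots x_{i_{2r}}$. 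Both $u'$ and $v'$ have degree $r$ and all their indices lie in $\{1,\dots,N\}$, so to conclude $\sort(u,v)\in G_{r,c,d}\times G_{r,c,d}$ it suffices to show that $u'$ and $v'$ are $<_d$-chains; equivalently, I need
\[
    i_k+d<i_{k+2}\qquad\text{for every }1\le k\le 2r-2.
\]

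The heart of the argument is a pigeonhole observation on the shuffled sequence $i_1,\dots,i_{2r}$. Each index $i_\ell$ is contributed either by the $\bdalpha$-chain or by the $\bdbeta$-chain; recording this gives a $2$-coloring of the positions $\{1,\dots,2r\}$ such that, reading each color class from left to right, the values reproduce the chain $\alpha_1<\alpha_2<\cdots$ (resp.\ $\beta_1<\beta_2<\cdots$) in order. Both chains are strictly increasing, since $d\ge 1$ forces $\alpha_i<\alpha_i+d<\alpha_{i+1}$; hence within each color class the values are distinct and the $a$-th $\bdalpha$-colored position carries exactly $\alpha_a$, while the way equal values coming from the two different chains are interleaved is immaterial below. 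Now fix $k$ and consider the three positions $k,k+1,k+2$. By pigeonhole two of them, say $j<j'$ with $j,j'\in\{k,k+1,k+2\}$, have the same color, and we may assume both come from $\bdalpha$; then $i_j=\alpha_a$ and $i_{j'}=\alpha_b$ with $a<b$. Hence $a+1\le b$, and the $<_d$-chain property of $\bdalpha$ yields
\[
    i_j+d=\alpha_a+d<\alpha_{a+1}\le\alpha_b=i_{j'}.
\]
Since $k\le j<j'\le k+2$ and the sequence $i_1,\dots,i_{2r}$ is non-decreasing, we get $i_k\le i_j$ and $i_{j'}\le i_{k+2}$, so $i_k+d\le i_j+d<i_{j'}\le i_{k+2}$, which is exactly what was needed.

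I do not anticipate a genuine obstacle in this proof; the only delicate bookkeeping is handling possible ties (an index appearing in both chains) in the merge, which is dispatched by the remark above — each chain being strictly increasing forces the relative order inside each color class, and cross-color ties affect none of the displayed inequalities. I would also note that the lemma could instead be deduced from item \ref{item-toric-part} of \Cref{ReesNoraml} combined with the observation, recorded just before the lemma, that the quasi-sorted pair reduction of $(\bdalpha,\bdbeta)$ is actually sorted; but the self-contained pigeonhole argument above is shorter and makes the combinatorial reason transparent.
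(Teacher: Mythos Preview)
Your proof is correct. The pigeonhole argument on three consecutive positions in the merged sequence is clean and complete; the handling of ties is fine since the argument uses only that each color class is strictly increasing and that the merged sequence is non-decreasing.

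Your approach differs from the paper's, which does not give a self-contained proof at all: the paper simply remarks that the lemma follows from item \ref{item-toric-part} of \Cref{ReesNoraml} (the quasi-sorted pair reduction being sorted forces the sorted pair to land back in $G_{r,c,d}\times G_{r,c,d}$), and separately points to \cite[Proposition 3.1]{MR4019342} for a direct verification. You anticipated the first of these in your closing paragraph. What your argument buys is independence from Nam's Gr\"obner basis result and from the external reference; what the paper's route buys is brevity, since the sortability is packaged inside a result already being quoted. Your proof is essentially the one in \cite{MR4019342}, rediscovered.
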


Conversely, by {\cite[Theorem 6.16]{EH}}, the set $G_{r,c,d}$ being sortable also implies that the generators in item \ref{item-toric-part} is the reduced Gr\"obner basis of the defining ideal of $\KK[\ini_{>_{\lex}}(I)]\cong \calF(\ini_{>_{\lex}}(I))$, and the underlined part is the corresponding leading monomial. \Cref{Lem:is-sortable} has been verified directly in \cite[Proposition 3.1]{MR4019342}.

Now, we are ready to state the following straightening-law type of result.

\begin{Proposition}
    \label{thm:Finer-straightening-law}
    Let $M(\bdalpha)$ and $M(\bdbeta)$ be two arbitrary maximal minors in $I=I_r(\bdH_{r,c,d})$. Then there exist $\bdalpha_1,\dots,\bdalpha_n,\bdbeta_1,\dots,\bdbeta_n\in \Lambda_{r,d}(N)$ and $\mu_1,\dots,\mu_n\in \KK$, such that
    \begin{equation}
        M(\bdalpha)M(\bdbeta)=\sum_{i=1}^{n} \mu_i M(\bdalpha_i)M(\bdbeta_i),
        \label{eqn:SL}
    \end{equation}
    with
    \begin{align}
        \ini_{>_{\lex}}(M(\bdalpha)M(\bdbeta))&=
        \ini_{>_{\lex}}(M(\bdalpha_1)M(\bdbeta_1)) \notag \\
        &>\ini_{>_{\lex}}(M(\bdalpha_2)M(\bdbeta_2))>\cdots>
        \ini_{>_{\lex}}(M(\bdalpha_n)M(\bdbeta_n)),
        \label{eqn:ini-increasing}
    \end{align}
    and
    \begin{equation}
        \text{the pair $(\bdx_{\bdalpha_i},\bdx_{\bdbeta_i})$ is sorted for each $i$.}
        \label{eqn:all-sorted}
    \end{equation}
\end{Proposition}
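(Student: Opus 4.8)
\noindent\textit{Proof plan.} First I would record that every maximal minor of $\bdH_{r,c,d}$ equals $M(\bdeta)$ for a unique $\bdeta\in\Lambda_{r,d}(N)$ (this is exactly the discussion preceding \Cref{claim-1}), so that in fact $\bdalpha,\bdbeta\in\Lambda_{r,d}(N)$ and $M(\bdalpha)M(\bdbeta)$ is a nonzero homogeneous element of degree $2r$ in $I^2$ (nonzero because $R$ is a domain). The plan is then to prove the following slightly more general assertion, from which the proposition follows by specialization: \emph{every nonzero homogeneous element $g$ of degree $2r$ in $I^2$ admits an expansion $g=\sum_{i=1}^{n}\mu_i M(\bdalpha_i)M(\bdbeta_i)$ with $\bdalpha_i,\bdbeta_i\in\Lambda_{r,d}(N)$, each pair $(\bdx_{\bdalpha_i},\bdx_{\bdbeta_i})$ sorted, and}
\[
    \ini_{>_{\lex}}(g)=\ini_{>_{\lex}}(M(\bdalpha_1)M(\bdbeta_1))>_{\lex}\cdots>_{\lex}\ini_{>_{\lex}}(M(\bdalpha_n)M(\bdbeta_n)).
\]
Passing to this more general form is what makes the argument recurse, because the intermediate remainders produced below are arbitrary elements of $I^2$ rather than products of two minors.

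The argument is a Noetherian induction on $\ini_{>_{\lex}}(g)$, which is legitimate since $>_{\lex}$ totally orders the finitely many monomials of degree $2r$ in $R$ and hence well-orders them. Given a nonzero homogeneous $g$ of degree $2r$ in $I^2$, the decisive observation is that, by \Cref{claim-2}, $\ini_{>_{\lex}}(g)$ lies in $\ini_{>_{\lex}}(I^2)=(\ini_{>_{\lex}}(I))^2$; since this is the square of the monomial ideal generated by $G_{r,c,d}$ and $\ini_{>_{\lex}}(g)$ has degree $2r$, one is forced to have $\ini_{>_{\lex}}(g)=\bdx_{\bdgamma}\bdx_{\bddelta}$ for some $\bdgamma,\bddelta\in\Lambda_{r,d}(N)$. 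I would then feed $(\bdx_{\bdgamma},\bdx_{\bddelta})$ into the sorting operator and set $(\bdx_{\bdalpha_1},\bdx_{\bdbeta_1})\coloneqq\sort(\bdx_{\bdgamma},\bdx_{\bddelta})$: by \Cref{Lem:is-sortable} the outputs again lie in $G_{r,c,d}$, i.e.\ $\bdalpha_1,\bdbeta_1\in\Lambda_{r,d}(N)$; the pair $(\bdx_{\bdalpha_1},\bdx_{\bdbeta_1})$ is sorted because $\sort$ is idempotent (immediate from its defining recipe, both pairs having the same weakly increasing list of indices); and $\bdx_{\bdalpha_1}\bdx_{\bdbeta_1}=\bdx_{\bdgamma}\bdx_{\bddelta}$ as monomials of $R$. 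Since $\ini_{>_{\lex}}$ is multiplicative and $\ini_{>_{\lex}}(M(\bdeta))=\bdx_{\bdeta}$ for $\bdeta\in\Lambda_{r,d}(N)$ by \Cref{claim-1}, this gives $\ini_{>_{\lex}}(M(\bdalpha_1)M(\bdbeta_1))=\ini_{>_{\lex}}(g)$. Taking $\mu_1$ to be the quotient of the leading coefficient of $g$ by that of $M(\bdalpha_1)M(\bdbeta_1)$ (the denominator being nonzero since $R$ is a domain), the element $g'\coloneqq g-\mu_1 M(\bdalpha_1)M(\bdbeta_1)$ is again homogeneous of degree $2r$ in $I^2$ and either vanishes---in which case we stop with $n=1$---or satisfies $\ini_{>_{\lex}}(g')<_{\lex}\ini_{>_{\lex}}(g)$, the two leading terms having cancelled. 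In the latter case I would apply the induction hypothesis to $g'$ and concatenate the resulting expansion onto the term $\mu_1 M(\bdalpha_1)M(\bdbeta_1)$, observing that $\ini_{>_{\lex}}(M(\bdalpha_2)M(\bdbeta_2))=\ini_{>_{\lex}}(g')<_{\lex}\ini_{>_{\lex}}(M(\bdalpha_1)M(\bdbeta_1))$, so that the initial monomials stay strictly decreasing.

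I expect the only genuinely load-bearing ingredient to be \Cref{claim-2}: it is precisely what keeps each successive remainder $g'$ inside the regime where its initial monomial is once more a product $\bdx_{\bdgamma}\bdx_{\bddelta}$ of two $<_d$-chains, so that the reduction can be iterated \emph{in the prescribed sorted shape}; granting that, the sortability of $G_{r,c,d}$ (\Cref{Lem:is-sortable}) supplies the sorted index pairs in $\Lambda_{r,d}(N)$ and the rest is routine term-order bookkeeping---there is no real combinatorial obstacle in this particular statement, its depth being absorbed into the cited results of Nam. As an alternative route, closer to the surrounding discussion, one could instead invoke \Cref{ReesNoraml}: the defining ideal of $\calF(I)\cong\KK[I]$ is a Sagbi lift of the defining ideal of $\calF(\ini_{>_{\lex}}(I))$, whose reduced Gr\"obner basis consists of the sorting relations; dividing $M(\bdalpha)M(\bdbeta)$ by that lifted Gr\"obner basis and substituting $Y_{\bdeta}\mapsto M(\bdeta)$ then yields an identity of exactly the shape \eqref{eqn:SL}--\eqref{eqn:all-sorted}.
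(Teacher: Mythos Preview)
Your main argument is correct and is essentially the paper's own proof: iteratively use \Cref{claim-2} to factor the leading monomial of the current remainder as a product of two $<_d$-chain monomials, sort via \Cref{Lem:is-sortable}, subtract the corresponding product of minors, and terminate by the well-ordering; your reformulation for arbitrary degree-$2r$ elements of $I^2$ is exactly the generality the recursion needs. One caution on the alternative route you sketch at the end: the Sagbi-lift description of the defining ideal of $\calF(I)$ is \Cref{cor:lifting-the-toric-part}, which in the paper's logic is \emph{derived from} this proposition (via \Cref{Fiber-Sagbi}), so invoking it here would be circular.
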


\begin{proof}
    Let $f_0=M(\bdalpha)M(\bdbeta)\in I^2$.  Suppose that $\sort(\bdalpha,\bdbeta)=(\bdalpha_1,\bdbeta_1)$. By \Cref{Lem:is-sortable}, $\bdalpha_1,\bdbeta_1\in \Lambda_{r,d}(N)$.  Now, we write $f_1\coloneqq f_0-M(\bdalpha_1)M(\bdbeta_1)\in I^2$ and choose $\mu_1=1$. If $f_1\ne 0$, by \Cref{claim-2}, we can find $\bdalpha_1',\bdbeta_1'\in \Lambda_{r,d}(N)$ and $\mu_2\in \KK$ such that $\ini(f_1)=\mu_2 \bdx_{\bdalpha_1'}\bdx_{\bdbeta_1'}$. Now, as above, we can find $\bdalpha_2,\bdbeta_2\in \Lambda_{r,d}(N)$ with $\sort(\bdalpha_1',\bdbeta_1')=(\bdalpha_2,\bdbeta_2)$ and make $f_2\coloneqq f_1-\mu_2 M(\bdalpha_2)M(\bdbeta_2)\in I^2$. If $f_2\ne 0$, we will keep the construction process. This process, however, will terminate due to the monomial ordering. And the properties stated in the proposition are clear.
\end{proof}

\begin{Remark}
    The straightening law with only \eqref{eqn:SL} and \eqref{eqn:ini-increasing} above is hardly surprising. The real contribution here is the tail part in \eqref{eqn:ini-increasing} as well as \eqref{eqn:all-sorted}. 

    Notice that with the lexicographic order on $R$, the initial monomial of each $f_i$ is easy to obtain. Since the sorting operator only cares about the product of the inputting pair, we can apply it to $\ini_{>_{\lex}}(f_i)$ and find the next product of maximal minors with ease. And \Cref{Lem:is-sortable} guarantees that this operation is well-defined.
\end{Remark}

Nam showed already in \cite{Nam} that the initial algebra of the Rees algebra is actually the Rees algebra of the initial ideal, namely $\ini \calR(I)=\calR(\ini I)$. But he didn't consider explicitly $\ini \calF(I)$, the initial algebra of the fiber cone. And the defining equations of the blow-up algebras $\calR(I)$ and $\calF(I)$ are not described explicitly in \cite{Nam}. 
Not only that, he did not study explicitly the Sagbi basis of the fiber cone $\calF(I)$. 
We will handle these vacancies in the following.

\begin{Theorem}
    \label{Fiber-Sagbi}
    The set $\Set{M(\bdalpha)\mid \bdalpha\in \Lambda_{r,d}(N)}$ forms a Sagbi basis of $\mathcal{F}(I)$, i.e., one has $\ini_{>_{\lex}} (\mathcal{F}(I))=\mathcal{F}(\ini_{>_{\lex}}(I))$.
\end{Theorem}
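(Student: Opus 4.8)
The plan is to show the two inclusions $\calF(\ini_{>_{\lex}}(I))\subseteq \ini_{>_{\lex}}(\calF(I))$ and $\ini_{>_{\lex}}(\calF(I))\subseteq \calF(\ini_{>_{\lex}}(I))$ separately, where $\calF(I)=\KK[M(\bdalpha)\mid \bdalpha\in\Lambda_{r,d}(N)]\subseteq R$ and $\calF(\ini_{>_{\lex}}(I))=\KK[\bdx_{\bdalpha}\mid\bdalpha\in\Lambda_{r,d}(N)]$. The first inclusion is the ``easy'' direction and holds in complete generality: for each generator $M(\bdalpha)$ of $\calF(I)$ one has $\ini_{>_{\lex}}(M(\bdalpha))=\bdx_{\bdalpha}$ by \Cref{claim-1}, and since the initial algebra is closed under products of initial monomials, every generator $\bdx_{\bdalpha}$ of $\calF(\ini_{>_{\lex}}(I))$ lies in $\ini_{>_{\lex}}(\calF(I))$. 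Hence $\calF(\ini_{>_{\lex}}(I))\subseteq\ini_{>_{\lex}}(\calF(I))$.

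For the reverse inclusion, the key tool is the straightening law of \Cref{thm:Finer-straightening-law}. The goal is to prove that every element $h\in\calF(I)$ has $\ini_{>_{\lex}}(h)\in\calF(\ini_{>_{\lex}}(I))$, i.e., $\ini_{>_{\lex}}(h)$ is a product of $<_d$-chain monomials $\bdx_{\bdalpha}$ with $\bdalpha\in\Lambda_{r,d}(N)$. Since $\calF(I)$ is generated in its natural grading by the minors $M(\bdalpha)$, and since it suffices to treat homogeneous elements, I would argue by downward induction on the initial monomial: write $h=\sum_j c_j\prod_k M(\bdgamma_{j,k})$ as a $\KK$-linear combination of products of $k$ minors (a fixed $k$, by homogeneity), and repeatedly apply \Cref{thm:Finer-straightening-law} pairwise to each product $\prod_k M(\bdgamma_{j,k})$ to rewrite it as a combination of products $M(\bdalpha_1)\cdots M(\bdalpha_k)$ with each consecutive pair $(\bdx_{\bdalpha_i},\bdx_{\bdalpha_{i+1}})$ sorted, and with controlled initial terms. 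This is exactly the SAGBI/Sagbi-basis criterion in the form of \cite[Section 2]{Sagbi}: once we know that any product of two generators $M(\bdalpha)M(\bdbeta)$ straightens into a combination of products whose initial monomials are products of generator-initials $\bdx_{\bdalpha_i}\bdx_{\bdbeta_i}$ (with the leading one equal to $\sort(\bdx_{\bdalpha},\bdx_{\bdbeta})$), the subduction algorithm on $h$ using the $M(\bdalpha)$'s never produces an initial monomial outside $\calF(\ini_{>_{\lex}}(I))$. More precisely, I would invoke the general Sagbi criterion: the $M(\bdalpha)$ form a Sagbi basis if and only if every binomial relation $\underline{Y_\bdalpha Y_\bdbeta}-Y_{\bdalpha'}Y_{\bdbeta'}$ among the initial monomials (here the sorting relations generating the toric ideal of $\calF(\ini_{>_{\lex}}(I))$, by \Cref{ReesNoraml}\ref{item-toric-part} and the sortability \Cref{Lem:is-sortable}) lifts to a relation $M(\bdalpha)M(\bdbeta)=M(\bdalpha')M(\bdbeta')+(\text{lower order in }>_{\lex})$ that subducts to zero — and \Cref{thm:Finer-straightening-law} provides exactly such a lift, since it produces $M(\bdalpha)M(\bdbeta)=\mu_1 M(\bdalpha_1)M(\bdbeta_1)+\cdots$ with $(\bdalpha_1,\bdbeta_1)=\sort(\bdalpha,\bdbeta)$, strictly decreasing initial monomials, and each intermediate pair sorted hence each intermediate initial monomial $\bdx_{\bdalpha_i}\bdx_{\bdbeta_i}$ lying in $\calF(\ini_{>_{\lex}}(I))$. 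Feeding these lifted relations into the subduction algorithm shows the tails can be eliminated without ever leaving $\calF(\ini_{>_{\lex}}(I))$, so $\ini_{>_{\lex}}(\calF(I))\subseteq\calF(\ini_{>_{\lex}}(I))$.

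The main obstacle is making the subduction/induction rigorous: one must verify that the rewriting process using \Cref{thm:Finer-straightening-law} terminates and that at no stage does an initial monomial of an intermediate expression fail to be a product of $<_d$-chain generators. Termination follows from the monomial order being a well-order on each graded piece (as in the proof of \Cref{thm:Finer-straightening-law} itself), and the ``staying inside $\calF(\ini_{>_{\lex}}(I))$'' is guaranteed precisely by clause \eqref{eqn:all-sorted}: every tail term $M(\bdalpha_i)M(\bdbeta_i)$ has sorted index pair, so its initial monomial $\bdx_{\bdalpha_i}\bdx_{\bdbeta_i}$ is a legitimate degree-$2k$... rather degree-$2r$ element of $\calF(\ini_{>_{\lex}}(I))$, and products of such remain in $\calF(\ini_{>_{\lex}}(I))$. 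The remaining bookkeeping — that the equality of initial algebras $\ini_{>_{\lex}}(\calF(I))=\calF(\ini_{>_{\lex}}(I))$ is equivalent to $\Set{M(\bdalpha)}$ being a Sagbi basis — is immediate from the definition recalled just before \Cref{Fiber-Sagbi}, since $\ini_{>_{\lex}}(M(\bdalpha))=\bdx_{\bdalpha}$ generates $\calF(\ini_{>_{\lex}}(I))$ as a $\KK$-algebra.
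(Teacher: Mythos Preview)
Your proposal is correct and takes essentially the same approach as the paper: both arguments invoke the Sagbi-basis criterion (what you describe as the subduction/lifting criterion, and what the paper cites as \cite[Proposition 1.1]{Sagbi}), using that the sorting binomials generate the toric ideal of $\calF(\ini_{>_{\lex}}(I))$ and that \Cref{thm:Finer-straightening-law} lifts each such binomial to a relation among the $M(\bdalpha)$'s with strictly decreasing initial terms. One minor remark: the paper's proof only needs clauses \eqref{eqn:SL} and \eqref{eqn:ini-increasing} of \Cref{thm:Finer-straightening-law} here---the sortedness condition \eqref{eqn:all-sorted} that you emphasize is not required for the Sagbi criterion itself (any $\bdx_{\bdalpha_i}\bdx_{\bdbeta_i}$ with $\bdalpha_i,\bdbeta_i\in\Lambda_{r,d}(N)$ already lies in $\calF(\ini_{>_{\lex}}(I))$, sorted or not), and is instead used downstream for \Cref{cor:lifting-the-toric-part}.
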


\begin{proof}
    We know that $\Set{\underline{Y_\bdalpha Y_\bdbeta} - Y_{\bdalpha_1}Y_{\bdbeta_1}\mid (\bdx_{\bdalpha_1},\bdx_{\bdbeta_1})=\sort(\bdx_{\bdalpha},\bdx_{\bdbeta})}$ gives a generating set of the defining ideal of the fiber cone $\mathcal{F}(\ini_{>_{\lex}} (I))\cong \KK[\ini_{>_{\lex}}(M(\bdalpha))\mid \bdalpha\in \Lambda_{r,d}(N)]$ by \cite[Theorem 5.14]{Nam}. Equation \eqref{eqn:SL} in \Cref{thm:Finer-straightening-law} gives  $M(\bdalpha)M(\bdbeta)-M(\bdalpha_1)M(\bdbeta_1)=\sum_{i=2}^{n} \mu_i M(\bdalpha_i)M(\bdbeta_i)$. Now with \eqref{eqn:ini-increasing}, we can apply \cite[Proposition 1.1]{Sagbi}. 
\end{proof}

\begin{Corollary}
     \label{cor:lifting-the-toric-part}
    The defining ideal $P\subset \KK[Y_\bdgamma \mid \bdgamma \in G_{r,c,d}]$ of the fiber cone $\KK[I]$ has a reduced Gr\"obner basis given by
    \begin{equation}
        Y_\bdalpha Y_\bdbeta-\sum_{i=1}^{n} \mu_i Y_{\bdalpha_i}Y_{\bdbeta_i}
        \label{eqn:quadratic-relations}
    \end{equation}
    for all unsorted pair $(\bdalpha,\bdbeta)$ in $G_{r,c,d}\times G_{r,c,d}$ with $\bdx_{\bdalpha}>_{\lex} \bdx_{\bdbeta}$, such that
    \[
        M(\bdalpha)M(\bdbeta)=\sum_{i=1}^{n} \mu_iM(\bdalpha_i)M(\bdbeta_i)
    \]
    as in \eqref{eqn:SL}. Moreover, $\Set{Y_\bdalpha Y_\bdbeta| \text{$(\bdx_{\bdalpha}>_{\lex}\bdx_{\bdbeta})$ is unsorted}}$ is the generating set of the initial ideal of $P$ with respect to some term order $\tau$.   
\end{Corollary}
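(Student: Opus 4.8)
The plan is to obtain the reduced Gr\"obner basis of $P$ by \emph{lifting} the reduced Gr\"obner basis of the defining ideal $P_0$ of the initial algebra $\calF(\ini_{>_{\lex}}(I))=\KK[\bdx_\bdgamma\mid\bdgamma\in G_{r,c,d}]$, along the lines of \cite[Section~2]{Sagbi}. Write $\pi\colon \KK[Y_\bdgamma]\to\KK[I]$, $Y_\bdalpha\mapsto M(\bdalpha)$, for the canonical surjection, so that $\ker\pi=P$, and likewise $\pi_0\colon \KK[Y_\bdgamma]\to\calF(\ini_{>_{\lex}}(I))$, $Y_\bdalpha\mapsto\bdx_\bdalpha$, so that $\ker\pi_0=P_0$. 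Because $G_{r,c,d}$ is sortable (\Cref{Lem:is-sortable}), \cite[Theorem~6.16]{EH} furnishes a term order $\tau_0$ on $\KK[Y_\bdgamma]$ for which the sorting relations $Y_\bdalpha Y_\bdbeta-Y_{\bdalpha_1}Y_{\bdbeta_1}$ --- one for each unsorted pair $(\bdalpha,\bdbeta)$ normalised so that $\bdx_\bdalpha>_{\lex}\bdx_\bdbeta$, with $(\bdalpha_1,\bdbeta_1)=\sort(\bdalpha,\bdbeta)$ --- form the reduced Gr\"obner basis of the toric ideal $P_0$, their leading terms being precisely the squarefree monomials $Y_\bdalpha Y_\bdbeta$. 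For a monomial $m$ of $\KK[Y_\bdgamma]$ set $w(m)\coloneqq\ini_{>_{\lex}}(\pi(m))$; this is a monoid homomorphism into the monomials of $R$ since $\ini_{>_{\lex}}$ is multiplicative. Let $\tau$ be the term order on $\KK[Y_\bdgamma]$ that compares two monomials first by their $w$-values under $>_{\lex}$ and breaks the remaining ties by $\tau_0$; this $\tau$ will be the order in the statement.

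Next I would write down the generators. Fix an unsorted pair $(\bdalpha,\bdbeta)$ with $\bdx_\bdalpha>_{\lex}\bdx_\bdbeta$ and apply \Cref{thm:Finer-straightening-law} to $M(\bdalpha)M(\bdbeta)$; since the quasi-sorted pair reduction used there is the sorting operator (as recorded just before \Cref{Lem:is-sortable}), equation \eqref{eqn:SL} reads $M(\bdalpha)M(\bdbeta)=\sum_{i=1}^{n}\mu_i M(\bdalpha_i)M(\bdbeta_i)$ with $\mu_1=1$ and $(\bdalpha_1,\bdbeta_1)=\sort(\bdalpha,\bdbeta)$, every pair $(\bdx_{\bdalpha_i},\bdx_{\bdbeta_i})$ sorted by \eqref{eqn:all-sorted}, and $\ini_{>_{\lex}}(M(\bdalpha_1)M(\bdbeta_1))=\bdx_\bdalpha\bdx_\bdbeta$ while $\ini_{>_{\lex}}(M(\bdalpha_i)M(\bdbeta_i))<_{\lex}\bdx_\bdalpha\bdx_\bdbeta$ for $i\ge 2$ by \eqref{eqn:ini-increasing}. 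Applying $\pi$, the element $g_{(\bdalpha,\bdbeta)}\coloneqq Y_\bdalpha Y_\bdbeta-\sum_{i=1}^{n}\mu_i Y_{\bdalpha_i}Y_{\bdbeta_i}$ lies in $\ker\pi=P$, and it is exactly the element displayed in \eqref{eqn:quadratic-relations}. Its $\tau$-leading term is $Y_\bdalpha Y_\bdbeta$: for $i\ge 2$ one has $w(Y_{\bdalpha_i}Y_{\bdbeta_i})=\bdx_{\bdalpha_i}\bdx_{\bdbeta_i}<_{\lex}\bdx_\bdalpha\bdx_\bdbeta=w(Y_\bdalpha Y_\bdbeta)$, so $Y_{\bdalpha_i}Y_{\bdbeta_i}$ is $\tau$-smaller already at the $w$-level; and for $i=1$ the two $w$-values agree (the sorting operator preserves products), so the tie is broken by $\tau_0$, which ranks $Y_\bdalpha Y_\bdbeta$ above $Y_{\bdalpha_1}Y_{\bdbeta_1}$ because $(\bdalpha,\bdbeta)$ is unsorted.

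It remains to verify that $\Set{g_{(\bdalpha,\bdbeta)}}$ is \emph{the} reduced Gr\"obner basis of $P$ with respect to $\tau$ and that $\ini_\tau(P)=\Braket{Y_\bdalpha Y_\bdbeta\mid (\bdx_\bdalpha>_{\lex}\bdx_\bdbeta)\ \text{unsorted}}$. One inclusion is immediate from the previous paragraph: each $g_{(\bdalpha,\bdbeta)}$ lies in $P$ with $\tau$-leading term $Y_\bdalpha Y_\bdbeta$, so $\ini_\tau(P)\supseteq\Braket{Y_\bdalpha Y_\bdbeta\mid (\bdx_\bdalpha>_{\lex}\bdx_\bdbeta)\ \text{unsorted}}$. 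For the reverse I would compare Hilbert functions. By \Cref{Fiber-Sagbi}, $\ini_{>_{\lex}}(\KK[I])=\KK[\bdx_\bdgamma\mid\bdgamma\in G_{r,c,d}]$; hence the Hilbert function of $\KK[Y_\bdgamma]/P\cong\KK[I]$ equals that of its initial algebra $\KK[\bdx_\bdgamma\mid\bdgamma\in G_{r,c,d}]\cong\KK[Y_\bdgamma]/P_0$ (an algebra and its initial algebra always sharing a Hilbert function), and therefore the monomial quotients $\KK[Y_\bdgamma]/\ini_\tau(P)$ and $\KK[Y_\bdgamma]/\ini_{\tau_0}(P_0)$ also have the same Hilbert function. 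Since $\ini_{\tau_0}(P_0)=\Braket{Y_\bdalpha Y_\bdbeta\mid (\bdx_\bdalpha>_{\lex}\bdx_\bdbeta)\ \text{unsorted}}$ is contained in $\ini_\tau(P)$, the two monomial ideals coincide, which settles the ``Moreover'' assertion and shows that $\Set{g_{(\bdalpha,\bdbeta)}}$ is a Gr\"obner basis of $P$ for $\tau$. Finally, it is reduced: the monomials $Y_\bdalpha Y_\bdbeta$ over unsorted pairs are pairwise non-divisible quadratic squarefree monomials, hence form the minimal generating set of $\ini_\tau(P)$; each $g_{(\bdalpha,\bdbeta)}$ has leading coefficient $1$; and by \eqref{eqn:all-sorted} every trailing monomial of $g_{(\bdalpha,\bdbeta)}$ corresponds to a sorted pair, so it is divisible by no $Y_\bdgamma Y_\bddelta$ with $(\bdgamma,\bddelta)$ unsorted and thus lies outside $\ini_\tau(P)$.

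The real content has already been packed into \Cref{thm:Finer-straightening-law,Fiber-Sagbi}; this corollary merely assembles them with \Cref{Lem:is-sortable} and the toric facts of \cite[Theorem~6.16]{EH}. The step that demands the most care is the $i=1$ comparison inside $g_{(\bdalpha,\bdbeta)}$, where the leading monomial $Y_\bdalpha Y_\bdbeta$ and the trailing monomial $Y_{\bdalpha_1}Y_{\bdbeta_1}$ carry the \emph{same} $w$-weight: this is precisely why $\tau$ must refine the $w$-weight by the sorting order $\tau_0$, and why one needs to know that the quasi-sorted pair reduction appearing in \Cref{thm:Finer-straightening-law} genuinely is $\sort(\bdalpha,\bdbeta)$, so that the toric leading-term partner and the first summand of \eqref{eqn:SL} coincide.
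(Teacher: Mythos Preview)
Your proof is correct and follows essentially the same approach as the paper's, which simply invokes \Cref{Fiber-Sagbi} together with \cite[Corollaries 2.1 and 2.2]{Sagbi} and \cite[Theorem~6.16]{EH}. You have unpacked the content of \cite[Corollaries 2.1 and 2.2]{Sagbi} explicitly---constructing the term order $\tau$ as a $w$-weighted refinement of $\tau_0$ and closing via the Hilbert-function comparison---rather than citing them, but the underlying argument is identical.
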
  

\begin{proof}
  
    It follows from \Cref{Fiber-Sagbi} and \cite[Corollary 2.1]{Sagbi} that the polynomials in \eqref{eqn:quadratic-relations} will generate the defining ideal $P$. As for the remaining part, it suffices to apply \Cref{Lem:is-sortable}, {\cite[Theorem 6.16]{EH}} and \cite[Corollary 2.2]{Sagbi}.
\end{proof}

Similar to \Cref{thm:Finer-straightening-law} and \Cref{cor:lifting-the-toric-part}, we can apply \cite[Corollary 2.2]{Sagbi} to give a primitive lifting of the binomial for the syzygy part in item \ref{item-syzygy-part} of \Cref{ReesNoraml}. However, there is a more elegant way to do it, i.e., we can borrow the strength from the Eagon--Northcott resolution in \cite{Eagon-Northcott}.

\begin{Theorem}
    \label{thm:defining-eqn-Hankel}
    The relations in \eqref{eqn:quadratic-relations} together with the syzygy-type relations
    \[
        \sum_{j=1}^{r+1} (-1)^{j+1} x_{c_{j+(k-1)d}}
        Y_{{c_1},{c_2+d},\dots,{c_{j-1}+(j-2)d},{c_{j+1}+(j-1)d},\dots,{c_{r+1}+(r-1)d}}
    \]
    for arbitrary $c_1<c_2<\cdots<c_{r}<c_{r+1}$  and arbitrary $k$ with $1\le k \le r$, form a Gr\"obner basis of the defining ideal of $\calR(I)$ with respect to some term order on $R[\bdY]$.
\end{Theorem}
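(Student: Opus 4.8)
The statement combines two families of relations into a single Gröbner basis. The quadratic "fiber-type" relations in \eqref{eqn:quadratic-relations} are already known, by \Cref{cor:lifting-the-toric-part}, to be a reduced Gröbner basis of the defining ideal $P$ of the fiber cone. The syzygy-type relations are the lifts of the binomials in item \ref{item-syzygy-part} of \Cref{ReesNoraml}; the point is that these lifts are \emph{exactly} the relations coming from the Eagon--Northcott complex of $\bdH_{r,c,d}$. So the plan is: (1) identify the displayed syzygy relations as the degree-one (in $\bdY$) syzygies on the maximal minors $M(\bdalpha)$ given by the Eagon--Northcott resolution of \cite{Eagon-Northcott}, hence verify they genuinely lie in the defining ideal $\calJ$ of $\calR(I)$; (2) invoke the fiber-type statement — which the paper has set up via \Cref{ReesNoraml} and the remarks following it (the defining ideal of $\calR(\ini_{>_{\lex}}(I))$ is generated by item \ref{item-toric-part} together with item \ref{item-syzygy-part}, and $\calR(I)$ is of fiber type because $\calR(\ini I)=\ini\calR(I)$) — to conclude that $\calJ$ is generated by the two families; (3) upgrade "generating set" to "Gröbner basis" by exhibiting a term order on $R[\bdY]$ for which the leading terms of the listed relations generate $\ini(\calJ)$.

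\textbf{Carrying it out.} For step (3) I would extend the order $\succ$ on $R[t]$ used before \Cref{ReesNoraml} to a term order on $R[\bdY]$ (sending $Y_\bdalpha\mapsto\bdx_{\bdalpha}t$ and breaking ties, as usual, by an elimination order that weights the $\bdY$-variables). Under such an order the leading term of the quadratic relation in \eqref{eqn:quadratic-relations} is $Y_\bdalpha Y_\bdbeta$ for the unsorted pair $(\bdalpha,\bdbeta)$, matching \Cref{cor:lifting-the-toric-part}; and the leading term of the syzygy relation is $\underline{x_t Y_\bdalpha}$ exactly as in item \ref{item-syzygy-part} of \Cref{ReesNoraml} (the Eagon--Northcott relation, after sorting the index pattern, has its $\bdY$-term with the largest initial monomial picked out, precisely the term the proof of \Cref{ReesNoraml} underlines). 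Then the initial ideal of the ideal generated by our relations contains $\ini_\succ(\calR(\ini_{>_{\lex}}(I)))$, which by \Cref{ReesNoraml} equals $\ini_\succ(\calR(I))$; and it is contained in $\ini(\calJ)$. Since we already know (fiber type) that our relations generate $\calJ$, and their leading terms generate $\ini(\calJ)$, they form a Gröbner basis. The verification that the displayed alternating sum is an Eagon--Northcott syzygy is a direct computation: expanding $\sum_{j}(-1)^{j+1}x_{c_j+(k-1)d}M(\ldots \widehat{c_j}\ldots)$ along the $k$-th row of $\bdH_{r,c,d}$ recognizes it as the Laplace-type expansion of a matrix with a repeated row, hence zero — this is the standard fact that $I_r$ of an $r\times c$ matrix is resolved by Eagon--Northcott with linear first syzygies.

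\textbf{The main obstacle.} The genuinely delicate point is matching the Eagon--Northcott syzygies against the binomials of item \ref{item-syzygy-part} of \Cref{ReesNoraml} on the level of leading terms, i.e., checking that for each of Nam's syzygy binomials $\underline{x_t Y_\bdalpha}-x_{a_k}Y_\bdbeta$ there is, among the Eagon--Northcott relations for a suitable choice of $c_1<\cdots<c_{r+1}$ and of $k$, one whose initial part is $x_t Y_\bdalpha$ and whose remaining terms all have strictly smaller initial monomials (so that the two generating sets produce the \emph{same} initial ideal, not merely comparable ones). Concretely one must see that the index shift $c_j\mapsto c_j+(j-2)d$ built into the Eagon--Northcott formula is exactly the bookkeeping that turns a $<_d$-chain into the adjacent index pattern $\bdalpha$, and that dropping $c_j$ versus $c_{j+1}$ corresponds to the two ``neighboring'' elements $x_t$ and $x_{a_k}$ with $\alpha_{k-1}<_d t<\alpha_k$. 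Once this combinatorial dictionary is pinned down the rest is formal; I expect the Eagon--Northcott identification itself to be clean, with the leading-term matching being the step that needs care.
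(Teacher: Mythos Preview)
Your proposal is correct and follows essentially the same approach as the paper: identify the displayed relations as Eagon--Northcott syzygies (via the Laplace expansion of an $(r+1)\times(r+1)$ matrix with a repeated row), verify that their leading terms match the underlined monomials $x_t Y_{\bdalpha}$ in item \ref{item-syzygy-part} of \Cref{ReesNoraml}, and then invoke the Sagbi lifting machinery together with \Cref{cor:lifting-the-toric-part}. The only inefficiency is your step (2): you do not need to establish ``fiber type'' as an intermediate fact, since \cite[Proposition 1.1 and Corollaries 2.1, 2.2]{Sagbi} give both generation and the Gr\"obner basis property directly once each binomial generator of the defining ideal of $\ini_\succ\calR(I)=\calR(\ini_{>_{\lex}}I)$ has been lifted to a relation in $\calJ$ with the same leading term---which is exactly what your leading-term matching accomplishes.
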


\begin{proof}
    The presentation matrix of $I$ can be obtained from the Eagon--Northcott resolution. We describe it in detail here. Take arbitrary increasing column indices $c_1<c_2<\cdots<c_{r}<c_{r+1}$ and arbitrary $k$ with $1\le k\le r$. We will consider the determinant of the $(r+1)\times(r+1)$ matrix $\bdA$ pictured in \Cref{fig:matrix-A}, where the $k$-th row and the $(k+1)$-th row are identical.
   
    \begin{figure}[htb]
        $\bdA=
        \begin{pNiceMatrix}
            x_{c_1}        & \Cdots & x_{c_k}        & x_{c_{k+1}}        & \Cdots & x_{c_{r+1}}        \\
            x_{c_1+d}      & \Cdots & x_{c_k+d}      & x_{c_{k+1}+d}      & \Cdots & x_{c_{r+1}+d}      \\
            \Vdots         &        & \Vdots         & \Vdots             &        & \Vdots             \\
            x_{c_1+(k-1)d} & \Cdots & x_{c_k+(k-1)d} & x_{c_{k+1}+(k-1)d} & \Cdots & x_{c_{r+1}+(k-1)d} \\
            x_{c_1+(k-1)d} & \Cdots & x_{c_k+(k-1)d} & x_{c_{k+1}+(k-1)d} & \Cdots & x_{c_{r+1}+(k-1)d} \\
            x_{c_1+kd}     & \Cdots & x_{c_k+kd}     & x_{c_{k+1}+kd}     & \Cdots & x_{c_{r+1}+kd}     \\
            \Vdots         &        & \Vdots         & \Vdots             &        & \Vdots             \\
            x_{c_1+(r-1)d} & \Cdots & x_{c_k+(r-1)d} & x_{c_{k+1}+(r-1)d} & \Cdots & x_{c_{r+1}+(r-1)d}
        \end{pNiceMatrix}$
        \caption{The matrix $\bdA$} 
        \label{fig:matrix-A}
    \end{figure}
    
    By expanding along the $k$-th row, we arrive at the relation
    \[
        \sum_{j=1}^{r+1} (-1)^{j+1} x_{c_{j+(k-1)d}}\det(\bdA_{c_1,\dots,\hat{c_j},\dots,c_{r+1}})=0,
    \]
    where $\bdA_{c_1,\dots,\hat{c_j},\dots,c_{r+1}}$ is obtained from the matrix $\bdA$ by removing the $k$-th row and the $j$-th column.

    It is clear that the leading monomial of $x_{c_{j+(k-1)d}}\det(\bdA_{c_1,\dots,\hat{c_j},\dots,c_{r+1}})$ is
    \begin{equation}
        x_{c_{j+(k-1)d}}x_{c_1}x_{c_2+d}\cdots x_{c_{j-1}+(j-2)d}x_{c_{j+1}+(j-1)d}\cdots x_{c_{r+1}+(r-1)d},
        \label{eqn:leading-monomial}
    \end{equation}
    which is a term, up to sign, of the full expansion of $\det(\bdA)$, before cancellation.

    On the other hand, as the indices of the entries are non-decreasing along the rows and columns of $\bdA$, the lexicographic order guarantees that the leading monomial in the full expansion is the diagonal one:
    \[
        x_{c_1}x_{c_2+d}\cdots x_{c_{k}+(k-1)d}x_{c_{k+1}+(k-1)d}\cdots x_{c_{r+1}+(r-1)d}.
    \]
    It is precisely when we choose $j=k$ or $j=k+1$ in \eqref{eqn:leading-monomial}. This corresponds to the binomial in item \ref{item-syzygy-part} given by Nam, after making  
    \[
        (c_1,c_2+d,\dots,c_k+(k-1)d,c_{k+1}+(k-1)d,\dots,c_{r+1}+(r-1)d)=
        (\alpha_1,\dots,\alpha_{k-1},t,\alpha_k,\dots,\alpha_r).
    \]

    With \Cref{cor:lifting-the-toric-part} in mind, the last step is to apply the proof of \cite[Theorem 5.13]{Nam} and \cite[Proposition 1.1 and Corollares 2.1, 2.2]{Sagbi}.
\end{proof}

Since we know the Sagbi bases of the fiber cone $\calF(I)$ and the Rees algebra $\calR(I)$, 
we can end this section with some quick applications.

\begin{Proposition}
    \label{Deformation}
    \begin{enumerate}[a]
        \item \label{Deformation-a}
        The fiber cone $\calF(I)$ is a normal Cohen--Macaulay domain. In particular, $\calF(I)$ has rational singularities if $\Char(\mathbb{K})= 0$, and it is F-rational if $\Char(\mathbb{K})>0$.
        \item When $r\ge 2$ and $d\ge 1$, the fiber cone $\calF(I)$ is Gorenstein if and only if     
            \[
                c\in \Set{r,r+1,r+d,r+d+1,2r+d}.
            \]
        \item \label{Deformation-c}
        The analytic spread of the ideal $I$, namely, the dimension of the fiber cone is
            \begin{equation}
                \dim(\calF(I))= 
                \begin{cases}
                    c+(r-1)d, & \text{if $ r+d<c$}, \\
                    rc-r^2+1, & \text{if $r<c\le r+d$},\\
                    1, & \text{if $r=c$}.
                \end{cases}
                \label{eqn:dim-toric-ring}
            \end{equation}
    \end{enumerate}  
\end{Proposition}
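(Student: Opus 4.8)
The plan is to push all three statements down to the toric ring $A_0\coloneqq\calF(\ini_{>_{\lex}}(I))$, which by \Cref{Fiber-Sagbi} is the initial algebra $\ini_{>_{\lex}}(A)$ of $A\coloneqq\calF(I)$; here $A$ is a domain since $A\subset R$, and $A_0=\KK[\bdx_\bdalpha\mid\bdalpha\in\Lambda_{r,d}(N)]$ is the affine semigroup ring on the $0$–$1$ vectors $\bdv_\bdalpha\coloneqq\bde_{\alpha_1}+\dots+\bde_{\alpha_r}\in\NN^N$. Since $A_0$ is an initial algebra of $A$, the two share the same Hilbert function, which is used throughout. For part (a): the set $G_{r,c,d}$ is sortable (\Cref{Lem:is-sortable}), so by \cite[Theorem 6.16]{EH} the toric defining ideal of $A_0$ has a quadratic Gr\"obner basis whose leading terms are the squarefree monomials $Y_\bdalpha Y_\bdbeta$; by the standard criterion this makes $A_0$ a normal affine semigroup ring (\cite{Sturmfels}), hence Cohen--Macaulay by Hochster's theorem, with rational singularities when $\Char(\KK)=0$ and $F$-rational (indeed $F$-regular) when $\Char(\KK)>0$. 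As $A$ is a flat Sagbi deformation of $A_0$, each of the properties normal, Cohen--Macaulay, rational singularities, $F$-rationality passes from $A_0$ to $A$ by the deformation results of Conca--Herzog--Valla in \cite{Sagbi}, proving part (a).

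For part (b) I would reduce the Gorenstein question for $A$ to that for $A_0$. Both are Cohen--Macaulay graded normal domains by part (a) and share a Hilbert series, hence an $a$-invariant $a$. For any Cohen--Macaulay graded algebra $B$ one has $H_{\omega_B}(t)=(-1)^{\dim B}H_B(1/t)$, so the equality $H_{\omega_B}(t)=t^{-a}H_B(t)$ is a condition on $H_B$ alone, holding for $A$ iff for $A_0$. For a Cohen--Macaulay graded normal domain $B$ this equality is equivalent to Gorensteinness: ``$\Leftarrow$'' is $\omega_B\cong B(a)$, while ``$\Rightarrow$'' holds because the equality makes $\omega_B(-a)$ a rank-one torsion-free graded $B$-module with Hilbert function equal to that of $B$, so a degree-$0$ generator $u$ gives an inclusion $B\cong Bu\hookrightarrow\omega_B(-a)$ of graded modules with equal Hilbert function, forcing $\omega_B\cong B(a)$. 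Hence $A$ is Gorenstein iff $A_0$ is, and it remains to test Gorensteinness of the normal semigroup ring $A_0$. I would do this through the Danilov--Stanley description of $\omega_{A_0}$ as the ideal generated by the lattice points in the relative interior of the cone $\RR_{\ge0}\langle\bdv_\bdalpha\rangle$: $A_0$ is Gorenstein exactly when these interior points form a single translate of the semigroup, and a case analysis along the regimes of \eqref{eqn:dim-toric-ring} should produce precisely $c\in\{r,r+1,r+d,r+d+1,2r+d\}$. This final verification, which needs an explicit grip on the cone and genuine casework in the degenerate range $r<c\le r+d$, is the step I expect to be the main obstacle.

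For part (c), $\dim\calF(I)=\dim A_0=\operatorname{rank}_\ZZ\langle\bdv_\bdalpha\mid\bdalpha\in\Lambda_{r,d}(N)\rangle$, since $A$ and $A_0$ have the same Hilbert function and the Krull dimension of a semigroup ring equals the rank of its exponent lattice. Fixing some $\bdalpha_0$, this lattice is $\ZZ\bdv_{\bdalpha_0}+D$ with $D\coloneqq\langle\bdv_\bdalpha-\bdv_\bdbeta\rangle$; as $\bdv_{\bdalpha_0}$ has coordinate sum $r\ne0$ while $D$ lies in the sum-zero hyperplane, the rank is $1+\operatorname{rank}D$. Under the substitution $\beta_i\coloneqq\alpha_i-(i-1)d$, the admissible $i$-th entries $\alpha_i$ sweep out an interval $S_i$ of $c-r+1$ consecutive integers with $\min S_1=1$ and $\max S_r=N$; moreover $\bdv_\bdalpha-\bdv_\bdbeta=\sum_i(\bde_{\alpha_i}-\bde_{\beta_i})$ with $\alpha_i,\beta_i\in S_i$, and conversely every $\bde_x-\bde_y$ with $x,y\in S_i$ is obtained by freezing the remaining coordinates at their extreme admissible values, so $D=\langle\bde_x-\bde_y\mid x,y\in S_i\ \text{for some}\ i\rangle$ and $\operatorname{rank}D=\bigl|\bigcup_iS_i\bigr|$ minus the number of connected blocks into which $S_1,\dots,S_r$ group (two intervals being joined when they meet). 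Since $\min S_{i+1}-\max S_i=1+d+r-c$, in the range $r+d<c$ consecutive intervals overlap, so there is one block, $\bigcup_iS_i=[1,N]$, and $\operatorname{rank}D=N-1$; in the range $r<c\le r+d$ the $S_i$ are pairwise disjoint (merely adjacent when $c=r+d$), so there are $r$ blocks and $\operatorname{rank}D=r(c-r+1)-r=r(c-r)$; and when $r=c$ there is a single generator. Adding back $1$ gives $\dim\calF(I)$ equal to $c+(r-1)d$, to $rc-r^2+1$, and to $1$ respectively, matching \eqref{eqn:dim-toric-ring}.
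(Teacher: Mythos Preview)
Your argument for part~(a) is correct and follows the same Sagbi-deformation scheme as the paper.  The only cosmetic difference is how you reach normality of $A_0=\calF(\ini(I))$: you go directly through the squarefree-initial-ideal criterion, while the paper deduces it from the normality of $\calR(\ini(I))$ (established by Nam) via the fact that fiber cones of normal Rees algebras of equigenerated ideals are normal.  Both routes land at \cite[Corollary 2.3]{Sagbi} for the transfer to $\calF(I)$.

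For part~(b), your reduction from $A$ to $A_0$ is valid: what you have written is essentially Stanley's criterion that a Cohen--Macaulay graded domain is Gorenstein if and only if its Hilbert series satisfies the functional equation, so equality of Hilbert series forces $A$ and $A_0$ to be Gorenstein simultaneously (normality is not needed for this step).  The paper uses a different transfer mechanism, namely the Conca--Varbaro result \cite[Corollary 2.7]{arXiv1805.11923} on squarefree Gr\"obner degenerations.  Where your proof remains genuinely incomplete is the last step---the classification of when the toric ring $A_0$ is Gorenstein.  The paper does not carry out the Danilov--Stanley analysis you propose either; it simply quotes \cite[Theorem 3.7]{arXiv:1901.01561}, which handles exactly this semigroup ring via Ehrhart-theoretic methods.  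So your outline is sound, but the ``main obstacle'' you flag is real, and the paper resolves it by citation rather than by direct computation.

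Your treatment of part~(c) is correct and takes a genuinely different route from the paper.  The paper invokes \cite[Theorem 2.3]{arXiv:1901.01561} for the range $c\ge r+d$ and then, for $r<c<r+d$, performs a reduction of $I_r(\bdH_{r,c,d})$ to $I_r(\bdH_{r,c,d'})$ with $d'=c-r$ to land in the already-handled case $c=r+d'$.  Your approach instead computes the rank of the exponent lattice directly: identifying the range $S_i$ of the $i$-th diagonal entry as an interval of length $c-r+1$, observing that $D=\langle\bde_x-\bde_y\mid x,y\in S_i\rangle$, and reading off $\operatorname{rank}D$ from the connectivity pattern of the $S_i$.  This is entirely elementary and self-contained, and in fact gives a cleaner uniform explanation of why the formula changes at $c=r+d$ (the intervals pass from overlapping to disjoint precisely there).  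The paper's approach, by contrast, recycles an existing reference and a structural reduction, which is shorter to state but less transparent.
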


\begin{proof}
    Nam in \cite[Theorem 3.28 and Proposition 5.12]{Nam} showed that  $\calR(I)$ is normal, and $\ini(\calR(I))=\calR(\ini(I))$. Now, $\calF(\ini(I))$ inherits the normality from $\calR(\ini(I))$ by \cite[Theorem 7.1]{MR1283294}. 
    The deformation theory of the Sagbi basis then gives the normality, Cohen--Macaulayness, and singularity of $\calF(I)$ by \Cref{Fiber-Sagbi} and \cite[Corollary 2.3]{Sagbi}. And the Gorensteinness of the fiber cone is the by-products of Sagbi basis theory with \cite[Corollary 2.7]{arXiv1805.11923} and \cite[Theorem 3.7]{arXiv:1901.01561}.
    
    It remains to prove the item \ref{Deformation-c}.
    The case when $c\ge r+d$ follows directly from \cite[Theorems 2.3]{arXiv:1901.01561}. 
    And the case when $c=r$ is due to the fact that $I$ is principal.
    As for the remaining case when $1<r<c<r+d$, by the reduction before \cite[Theorem 2.3]{arXiv:1901.01561}, we can reduce the ideal $I_r(\bdH_{r,c,d})\subset R=\KK[\bdx]$ to some $I_r(\bdH_{r,c,d'})\subset \KK[{\bdx'}]$ with $d'=c-r$. Here, the collection of variables ${\bdx'}$ is a subset of the original collection of variables $\bdx$, and $I_r(\bdH_{r,c,d'})\KK[\bdx]=I_r(\bdH_{r,c,d})$.
    Whence, $c=r+d'$ and by the result in the $c=r+d$ case, the current dimension is $rd'+1=rc-r^2+1$. 
\end{proof}

\section{Regularity of the fiber cone}

In this section, we focus on the Castelnuovo--Mumford regularity of the fiber cone $\calF(I)\cong \KK[I]$ for the ideal $I=I_r(\bdH_{r,c,d})\subset R=\KK[x_1,\dots,x_{c+(r-1)d}]$ where $2\le r\le c$. The computations and proofs are quite involved. We encourage the readers to refer back to the introduction for the road map of this section. 

Recall that the dimension of the ground ring $R$ is denoted by
\[
    N=N(r,c,d)\coloneqq \dim(R)=c+(r-1)d
\]
in this paper. And we have the natural epimorphisms
\begin{align}
    \KK[\bdY]=\KK[Y_{\bdalpha}\mid \bdalpha\in \Lambda_{r,d}(N)] &\onto \KK[I]=\KK[M(\bdalpha)\mid \bdalpha\in \Lambda_{r,d}(N)],
    \label{eqn:P_0} \\
    \intertext{and}
    \KK[Y_{\bdalpha}\mid \bdalpha\in \Lambda_{r,d}(N)] &\onto \KK[\ini(I)]=\KK[\ini(M(\bdalpha))\mid \bdalpha\in \Lambda_{r,d}(N)].
    \label{eqn:P_1}
\end{align}
Furthermore, if $\beta_{i,j}$ is the graded Betti number of $\KK[I]$ considered as a $\KK[\bdY]$-module, then the Castelnuovo--Mumford regularity of $\KK[I]$ is defined to be
\[
    \reg(\KK[I])\coloneqq \max_{i,j} \Set{j-i\mid \beta_{i,j}\ne 0}.
\]
The following main result of this section computes this regularity explicitly.

\begin{Theorem}
    \label{thm:main-reg-balanced}
    The Castelnuovo--Mumford regularities of the fiber cones $\calF(I_r(\bdH_{r,c,d}))$ and $\calF(\ini (I_r(\bdH_{r,c,d})))$ are given by
    \[
        \reg(\calF(I_r(\bdH_{r,c,d})))=\reg(\calF(\ini (I_r(\bdH_{r,c,d}))))=
        \begin{cases}
            N-1-\floor{(N-1)/r}, & \text{if $2r+d\le c$},\\
            dr-2r-3d+2c-2, & \text{if $r+d< c <2r+d$},\\
            (r-1)(c-r-1), & \text{if $r< c\le r+d$},\\
            0, & \text{if $r=c$}.
        \end{cases}
    \]
\end{Theorem}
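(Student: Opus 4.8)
## Proof proposal

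The plan is to reduce the computation of $\reg(\calF(I_r(\bdH_{r,c,d})))$ to a purely combinatorial problem about the initial ideal $\ini_{>_{\lex}}(I)$, and then to attack that problem through Alexander duality. First I would invoke the Sagbi deformation machinery (\Cref{Fiber-Sagbi} together with the flat degeneration results behind \cite[Corollary 2.3]{Sagbi}) to conclude that $\reg(\calF(I))=\reg(\calF(\ini(I)))$: a Sagbi basis yields a flat family over $\mathbb{A}^1$ whose generic fiber is $\KK[I]$ and whose special fiber is $\KK[\ini(I)]$, and graded Betti numbers over $\KK[\bdY]$ can only go up under such a specialization, while the reverse inequality follows because both rings have the same Hilbert function and the special fiber is the toric ring $\KK[\ini(I)]$. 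Actually, since we have explicit Gröbner bases from \Cref{cor:lifting-the-toric-part}, I would prefer to argue directly: $\ini_{\tau}(P)$ is the edge ideal–type monomial ideal generated by the $Y_\bdalpha Y_\bdbeta$ for unsorted pairs, and $\reg(\KK[\bdY]/\ini_\tau(P))\ge \reg(\KK[\bdY]/P)=\reg(\KK[I])$, with equality forced by the Cohen–Macaulayness of $\KK[I]$ (\Cref{Deformation}\ref{Deformation-a}) and the fact that the sorted monomials form a basis in both rings. Either way, the problem becomes: compute $\reg$ of the toric ring $\KK[\ini(I)]$ whose defining initial ideal is generated by the squarefree quadratic monomials $Y_\bdalpha Y_\bdbeta$, $(\bdx_\bdalpha,\bdx_\bdbeta)$ unsorted.

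Next I would pass to the Alexander dual. Because $\KK[I]$ is Cohen–Macaulay (\Cref{Deformation}), $\reg(\KK[I])=\pd_{\KK[\bdY]}(\KK[I]) - (\dim\KK[\bdY] - \dim\KK[I])$ by Auslander–Buchsbaum, and $\dim\KK[I]$ is given by \Cref{Deformation}\ref{Deformation-c}. So it suffices to compute the projective dimension. By the Eagon–Reiner theorem (for the squarefree quadratic monomial ideal $\ini_\tau(P)$ cut out by the unsorted pairs, which defines a simplicial complex on the vertex set $G_{r,c,d}$), $\pd_{\KK[\bdY]}(\KK[\bdY]/\ini_\tau(P)) = \reg$ of the Alexander dual ideal $(\ini_\tau(P))^\vee$. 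Following the road map in the introduction, I would then: (i) identify the minimal generators of $(\ini_\tau(P))^\vee$ with the maximal cliques of the graph $\calG$ on $G_{r,c,d}$ whose edges are the \emph{sorted} pairs — this is the content of subsection \ref{ss-Generators}, and the combinatorial heart is the ``moving sequence'' of \Cref{lambdaIncrease}; (ii) show that, ordering the generators suitably, the Alexander dual ideal has complete-intersection successive colon ideals with respect to \emph{both} the lex-type and revlex-type orders on $R$ (subsection \ref{ss:linear-quotients}), where each colon ideal is either generated by ``corner'' variables only or by corner variables plus one ``tail'' monomial; and (iii) run mapping cones along this chain. Because of the tail generators one does not get linear quotients, so the mapping cone is non-minimal; I would therefore extract the precise projective dimension by carefully tracking which mapping-cone summands survive, exploiting the complete-intersection structure to bound the homological shifts from both sides.

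The main obstacle is step (iii) in the degenerate regime, when $c$ is small relative to $r$ and $d$ — precisely the case $r<c\le r+d$ (and the lower boundary $r+d<c<2r+d$), where the formula jumps from $N-1-\floor{(N-1)/r}$ to $(r-1)(c-r-1)$ and ultimately to $0$. Here the maximal cliques of $\calG$ are short, so the length of the longest complete-intersection colon chain is much harder to pin down, and a single choice of monomial order on $R$ does not give a sharp answer. The strategy, as flagged in the introduction, is to run the lex-type and revlex-type analyses \emph{simultaneously}: the lex order controls one end of each tail generator and the revlex order the other, and balancing the two yields the exact maximal length (this is subsection \ref{sec:sharp-bound-inter}). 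Once that length $\ell$ is known, $\pd=\ell$, hence $\reg(\KK[I]) = \ell - (\#G_{r,c,d} - \dim\KK[I])$, and substituting the three-part dimension formula of \Cref{Deformation}\ref{Deformation-c} produces the four cases of the theorem. Finally I would double-check the boundary values ($c=r$ trivially gives $\reg=0$ since $I$ is principal; and the two interior break-points $c=2r+d$, $c=r+d$ give matching values on both sides) to confirm the piecewise formula is internally consistent and continuous where it should be.
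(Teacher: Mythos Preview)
Your overall strategy matches the paper's: Sagbi deformation to pass to $\KK[\ini(I)]$, Alexander duality on the squarefree quadratic initial ideal $\ini_\tau(P)$, identification of generators of $(\ini_\tau(P))^\vee$ with maximal cliques of the ``sorted'' graph $\calG$, complete-intersection quotients with corner/tail generators, and mapping cones. But the duality step is garbled in a way that breaks the final computation.

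You write that by Auslander--Buchsbaum $\reg(\KK[I])=\pd_{\KK[\bdY]}(\KK[I])-(\dim\KK[\bdY]-\dim\KK[I])$; since $\KK[I]$ is Cohen--Macaulay, Auslander--Buchsbaum gives $\pd_{\KK[\bdY]}(\KK[I])=\dim\KK[\bdY]-\dim\KK[I]$, so your formula collapses to $\reg=0$ in every case. You then invoke Eagon--Reiner as $\pd(\KK[\bdY]/\ini_\tau(P))=\reg((\ini_\tau(P))^\vee)$, but the mapping-cone analysis you outline in step (iii) computes the \emph{projective dimension} of $(\ini_\tau(P))^\vee$, not its regularity; and your closing formula $\reg(\KK[I])=\ell-(\#G_{r,c,d}-\dim\KK[I])$ has a spurious correction term. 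The relation you need is the other direction of the Eagon--Reiner/Terai correspondence, as the paper uses (via \cite[Proposition~8.1.10]{MR2724673}):
\[
\reg(\KK[\bdY]/\ini_\tau(P))=\pd\bigl((\ini_\tau(P))^\vee\bigr).
\]
So once $\ell=\pd((\ini_\tau(P))^\vee)$ is found, $\reg(\KK[I])=\ell$ directly, with no correction. The numbers in the theorem statement are literally the projective dimensions of the Alexander dual (Propositions \ref{thm-main-result}, \ref{prop:sharp-bound-inter}, \ref{prop:less-than-r+d}).

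There is also a misattribution of the hard case. The simultaneous lex/revlex analysis (subsection~\ref{sec:sharp-bound-inter}) is for the middle range $r+d<c<2r+d$, not for $r<c\le r+d$. In the latter range the paper reduces to $c=r+d$ (via the variable-reduction before \cite[Theorem~2.3]{arXiv:1901.01561}); there every maximal clique has the same first and last variable, so there are \emph{no} tail generators, $(\ini(P))^\vee$ has honest linear quotients under lex alone, and the projective dimension is read off immediately as the maximal number of corners. Your sketch of step~(iii) would have you fighting a difficulty that isn't present in that range, while underestimating the four-step induction (Steps \ref{step-1}--\ref{step-4}) actually needed for $r+d<c<2r+d$.
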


\begin{proof}
    Let $P$ and $P'$ be the kernels of the epimorphisms in \eqref{eqn:P_0} and \eqref{eqn:P_1} respectively. By \Cref{cor:lifting-the-toric-part}, 
    \[
        \ini(P):=\ini_{\tau}(P)=\Braket{Y_\bdalpha Y_\bdbeta\mid \text{$(\bdx_{\bdalpha}>_{\lex}\bdx_{\bdbeta})$ is unsorted}}
    \]
    is squarefree. 
    Meanwhile, by \Cref{Fiber-Sagbi} and \cite[Corollary 2.2]{Sagbi}, there exists some term order $\tau'$ such that $\ini_{\tau}(P)=\ini_{\tau'}(P')$.
    Then, by our favorite \cite[Corollary 2.7]{arXiv1805.11923}, we will have 
    \[
        \reg(\KK[\bdY]/P)=\reg(\KK[\bdY]/\ini(P))=\reg(\KK[\bdY]/\ini_{\tau'}(P'))=\reg(\KK[\bdY]/P').
    \]
    Therefore, it amounts to explore the regularity of $\KK[\bdY]/\ini(P)$ considered as a $\KK[\bdY]$-module.

   Note that $\mathcal{F}(I)=\KK[I]\cong \KK[\bdY]/P$ is Cohen--Macaulay by \Cref{Deformation} \ref{Deformation-a}. Consequently, by \cite[Corollary 2.7]{arXiv1805.11923}, $\KK[\bdY]/\ini(P)$ is Cohen--Macaulay as well.

    Now, back to the probe of the regularity. The extremal situation when $r=c$ is easy since the ideal is principal. Meanwhile, the case when $r<c<r+d$ can be reduced to the case $c=r+d$ by the reduction stated before \cite[Theorem 2.3]{arXiv:1901.01561}; see also the explanation in the proof of \Cref{Deformation}.
  
    Now, we can focus on the cases when $c\ge r+d$. Whence,
    by applying \Cref{Deformation} (c), the Auslander--Buchsbaum theorem \cite[Theorem 1.3.3]{MR1251956} and the Eagon--Reiner theorem \cite[Theorem 8.1.9]{MR2724673}, we know that the Alexander dual of the squarefree ideal $\ini(P)$ has a linear resolution with regularity $\widetilde{N}-N$ or $\widetilde{N}-(rd+1)$ when $c>r+d$ or $c= r+d$ respectively.  Here, $\widetilde{N}=\dim(\KK[\bdY])$, namely the cardinality of $\Lambda_{r,d}(N)$.  Our task is then to find the projective dimension of $(\ini(P))^\vee$, by \cite[Proposition 8.1.10]{MR2724673}.
    This task can be completed by combining the coming Propositions \ref{thm-main-result}, \ref{prop:sharp-bound-inter} and \ref{prop:less-than-r+d}.  
\end{proof}

Here is the plan on how to complete the final task stated just now.
The concrete computation of $\projdim((\ini(P))^\vee)$ will be carried out for the case $2r+d\le c$ in subsection \ref{subsection:c>=2r+d}, for the case $r+d < c < 2r+d$ in subsection \ref{sec:sharp-bound-inter} and for the case $r\le c\le r+d$ in subsection \ref{sec:less-than-r+d}.
However, before that, we need to describe explicitly the minimal monomial generators of $(\ini(P))^\vee$ in subsection \ref{ss-Generators}. We will give a linear order on the minimal monomial generating set $G((\ini(P))^\vee)$, and show that $(\ini(P))^\vee$ has complete intersection quotients in subsection \ref{ss:linear-quotients}. The maximal length of the complete intersections in general only gives an upper bound of the projective dimension. By studying the iterated mapping cones carefully, we will show that the projective dimension is actually achieved by the maximal length. The general strategy will be stated in subsection \ref{ss:max-length}. And it is one of the essential parts for the computations in subsections \ref{subsection:c>=2r+d}, \ref{sec:sharp-bound-inter}, and \ref{sec:less-than-r+d}.

\subsection{Minimal monomial generators of $(\ini(P))^{\vee}$}
\label{ss-Generators}
In the following, we shall deal with two different linear order $>$ on the variables of $\KK[\bdY]=\KK[Y_{\bdalpha}\mid \bdalpha\in \Lambda_{r,d}(N)]$:
\begin{equation}
    \begin{minipage}{0.9\linewidth}
        \begin{enumerate}[i]
            \item \textbf{$lex$ type}:
                $Y_{\bdalpha}>Y_{\bdbeta} \Leftrightarrow \bdx_{\bdalpha}>_{\lex}\bdx_{\bdbeta}$, or
            \item \textbf{$revlex$ type}:
                $Y_{\bdalpha}>Y_{\bdbeta} \Leftrightarrow \bdx_{\bdalpha}>_{\revlex}\bdx_{\bdbeta}$.
        \end{enumerate}
    \end{minipage}
    \label{2-orderings}
\end{equation}
We will then consider the lexicographic order $>_{\lex}$ on $\KK[\bdY]$ with respect to either given linear order $>$ on the variables.

It is already known that a minimal monomial generating set of $\ini(P)=\ini_{\tau}(P)$ is given by
\[
    \Set{Y_{\bdalpha}Y_{\bdbeta}\mid \bdx_\bdalpha>_{\lex}\bdx_\bdbeta \text{ but $(\bdx_{\bdalpha},\bdx_{\bdbeta})$ is not sorted}}.
\]
It is not difficult to see that this is also
\[
    \Set{Y_{\bdalpha}Y_{\bdbeta}\mid \bdx_\bdalpha>_{\revlex}\bdx_\bdbeta \text{ but $(\bdx_{\bdalpha},\bdx_{\bdbeta})$ is not sorted}}.
\]
Let $\calG$ be the simple graph on the vertex set $\Set{Y_{\bdalpha}\mid \bdalpha\in \Lambda_{r,d}(N)}$, such that $\Set{Y_{\bdalpha}>Y_{\bdbeta}}$ is an edge if and only if $(\bdx_{\bdalpha},\bdx_{\bdbeta})$ is \emph{sorted}. It is then clear that $\ini(P)$ is the edge ideal of the complement graph $\calG^{\complement}$.

In the remaining of this subsection, we will only consider the case when 
$c> r+d$; the case when $r\le c\le r+d$, which is only slightly different, will be left in the final subsection \ref{sec:less-than-r+d}. Whence, as the ideal $(\ini(P))^{\vee}$ 
has a linear resolution by the proof of \Cref{thm:main-reg-balanced}, it is minimally generated by some squarefree monomials of degree
\[
    \reg((\ini(P))^{\vee})=\widetilde{N}-N
\]
by \eqref{eqn:dim-toric-ring}.

Suppose that $Y_{\bdalpha_1}Y_{\bdalpha_2}\cdots Y_{\bdalpha_{\widetilde{N}-N}}\in G(\ini(P)^{\vee})$ is a minimal monomial generator. This is equivalent to saying that $\Set{Y_{\bdalpha_1},\dots,Y_{\bdalpha_{\widetilde{N}-N}}}$ is a minimal vertex cover of the complement graph $\calG^{\complement}$, by \cite[Corollary 9.1.5]{MR2724673}. In other words, this means that
\[
    \Set{Y_{\bdbeta_1},\dots,Y_{\bdbeta_N}}\coloneqq \Set{Y_{\bdalpha_1},\dots,Y_{\bdalpha_{\widetilde{N}-N}}}^{\complement} 
\]
is a maximal clique of $\calG$. Notice that the cardinality of this set is precisely $N$.

\begin{Notation}
    \begin{enumerate}[1]
        \item For any squarefree monomial $u\in \KK[\bdY]$, write 
            \[
                \widehat{u}\coloneqq \left.\left(\prod_{\bdalpha\in \Lambda_{r,d}(N)}Y_{\bdalpha}\right)\right/u.
            \]
            On the other hand, if $F$ is a subset of the vertex set of $\calG$, we will write 
            \[
                \bdY^F \coloneqq \prod_{f\in F}f.
            \]
        \item For any increasing sequence $\bdbeta=(b_1<b_2<\cdots<b_s)$, we will write $\bdbeta^k=b_k$ for $k=1,2,\dots,s$. We will also use $\bdbeta^{\le k}$ for the subsequence $(b_1<b_2<\cdots<b_k)$ and similarly define $\bdbeta^{\ge k}$. Meanwhile, $\bdbeta^{\{k_1,...,k_2\}}$ will be the subsequence $(b_{k_1}<b_{k_1+1}<\cdots<b_{k_2})$.
    \end{enumerate}
\end{Notation}

Therefore, the minimal monomial generating set of $\ini(P)^{\vee}$ is given by
\begin{equation}
    \Set{\widehat{\bdY^F}| \text{$F$ is a maximal clique of $\calG$}}.
    \label{eqn:max-clique-gen}
\end{equation} 
Next, we describe the maximal cliques of $\calG$. 

\begin{Lemma}
    \label{MaxCli}
    The set
    \begin{equation}
        F=\Set{Y_{\bdbeta_1}>\cdots>Y_{\bdbeta_N}}
        \label{eqn:F-beta}
    \end{equation}
    is a maximal clique of $\calG$ if and only if $F$ satisfies the following three groups of conditions:
    \begin{gather}
        \scalebox{0.96}{$
            1\le \bdbeta_{1}^{1}\le \bdbeta_{2}^{1}\le \cdots \le \bdbeta_{N}^{1}\le
            \bdbeta_{1}^{2}\le \bdbeta_{2}^{2}\le \cdots \le \bdbeta_{N}^{2}\le  \cdots \le
            \bdbeta_{1}^{r}\le \bdbeta_{2}^{r}\le \cdots \le \bdbeta_{N}^{r}\le N;
            $}
        \label{eqn:clique-condition} \\
        \bdbeta_{k}^{j_k}+1=\bdbeta_{k+1}^{j_k} \text{ for some $j_k$, while } \bdbeta_{k}^{j}=\bdbeta_{k+1}^{j}\text{ for $j\ne j_k$};
        \label{condition:move-1} \\
        \bdbeta_{1}^{1}=1, \quad
        \bdbeta_{N}^{\le r-1}=\bdbeta_{1}^{\ge 2} \quad\text{and}\quad
        \bdbeta_{N}^{r}=N.
        \label{condition:clique-end}
    \end{gather} 
\end{Lemma}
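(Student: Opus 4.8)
The plan is to translate the combinatorial statement ``$F$ is a maximal clique of $\calG$'' entirely into statements about the sorting operator, and then to unwind what sortedness means for $<_d$-chains of length $r$. First I would recall that, by definition of $\calG$, a subset $F=\Set{Y_{\bdbeta_1}>\cdots>Y_{\bdbeta_N}}$ is a clique precisely when every pair $(\bdx_{\bdbeta_i},\bdx_{\bdbeta_j})$ with $i<j$ is sorted. So the first step is to characterize sortedness for two $<_d$-chains: if $\bdbeta=(b_1<_d\cdots<_d b_r)$ and $\bdgamma=(g_1<_d\cdots<_d g_r)$ are both in $\Lambda_{r,d}(N)$ and $Y_{\bdbeta}>Y_{\bdgamma}$ in the chosen order, then $(\bdx_{\bdbeta},\bdx_{\bdgamma})$ is sorted if and only if the two chains \emph{interleave componentwise}, i.e.\ $b_1\le g_1\le b_2\le g_2\le\cdots\le b_r\le g_r$; this is the standard description of sortedness for squarefree monomials of the same degree together with the observation (already used in \Cref{Lem:is-sortable} via \cite[Proposition 3.1]{MR4019342}) that the sorted partner of two $<_d$-chains is again a pair of $<_d$-chains, so no repeated index can occur in $\bdx_{\bdbeta}\bdx_{\bdgamma}$. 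Granting this, a clique $F$ of size $N$ becomes a chain $Y_{\bdbeta_1}>\cdots>Y_{\bdbeta_N}$ in which \emph{consecutive} members interleave, and by transitivity of the interleaving relation on such chains (which I would check: if $\bdbeta$ interleaves $\bdgamma$ and $\bdgamma$ interleaves $\bddelta$ then $\bdbeta$ interleaves $\bddelta$, because each inequality chains through) this is equivalent to \emph{all} pairs interleaving. Writing out the interleaving inequalities $\bdbeta_k^{\,j}\le\bdbeta_{k+1}^{\,j}\le\bdbeta_k^{\,j+1}$ for all $k$ and all $j$, and concatenating over $k=1,\dots,N-1$ and then over $j=1,\dots,r-1$, yields exactly the long monotonicity string \eqref{eqn:clique-condition}. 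So \eqref{eqn:clique-condition} is equivalent to $F$ being a clique of the stated size.

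Next I would pin down the ``consecutive'' structure \eqref{condition:move-1}. Two distinct vertices $Y_{\bdbeta_k}>Y_{\bdbeta_{k+1}}$ that interleave cannot be equal, so the multiset union $\bdx_{\bdbeta_k}\bdx_{\bdbeta_{k+1}}$ has at least one repeated index and at least one index of multiplicity one; combined with the interleaving inequalities, I claim the only possibility is that $\bdbeta_k$ and $\bdbeta_{k+1}$ agree in all but one coordinate, say coordinate $j_k$, and there $\bdbeta_{k+1}^{\,j_k}=\bdbeta_k^{\,j_k}+1$. Indeed if they differed in coordinate $j$ with $\bdbeta_k^{\,j}<\bdbeta_{k+1}^{\,j}$ and also in coordinate $j'<j$, then interleaving forces $\bdbeta_{k+1}^{\,j'}\le\bdbeta_k^{\,j'+1}\le\cdots\le\bdbeta_k^{\,j}<\bdbeta_{k+1}^{\,j}$ but one must keep the total number of cells equal and a short counting/telescoping argument shows the difference in a single step is $1$ in a single coordinate; the fact that $F$ is a clique of the \emph{maximal} size $N$ among cliques contained in a common linear chain forces no coordinate to jump by more than $1$ across a consecutive step, since a jump of $2$ would leave room to insert another vertex, contradicting maximality (here ``maximal'' as a clique translates into ``maximal as an interleaving chain'', and such a chain saturates every unit step). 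This gives \eqref{condition:move-1}. Conversely, \eqref{eqn:clique-condition} together with \eqref{condition:move-1} clearly produce an interleaving chain of size $N$, hence a clique.

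It remains to match the endpoint conditions \eqref{condition:clique-end} with \emph{maximality} of the clique. Given a clique satisfying \eqref{eqn:clique-condition}–\eqref{condition:move-1}, it is a maximal clique iff one cannot prepend a vertex $Y_{\bdbeta_0}$ with $\bdbeta_0$ interleaving (from above) with $\bdbeta_1$, nor append a $Y_{\bdbeta_{N+1}}$ below $\bdbeta_N$, nor insert one in between (the last being already excluded by \eqref{condition:move-1}, as argued). Unwinding ``cannot prepend'': the smallest possible first vertex in $\Lambda_{r,d}(N)$ that still interleaves above $\bdbeta_1$ is forced to be $\bdbeta_1$ itself exactly when $\bdbeta_1^{\,1}=1$ — otherwise $\bdx_{\bdbeta_1}$ with its first index decreased by $1$ is a legal $<_d$-chain interleaving above it. Similarly ``cannot append'' forces $\bdbeta_N^{\,r}=N$. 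The middle condition $\bdbeta_N^{\le r-1}=\bdbeta_1^{\ge 2}$ is the content of the long chain \eqref{eqn:clique-condition} being \emph{tight at the junctions} $\bdbeta_N^{\,j}\le\bdbeta_1^{\,j+1}$: if some inequality $\bdbeta_N^{\,j}<\bdbeta_1^{\,j+1}$ were strict, one could build an interleaving chain longer than $N$ by inserting an extra vertex that bumps coordinate $j$, again contradicting maximality; and conversely if all these junction inequalities are equalities and the two endpoint conditions hold, a counting argument (the total number of unit moves available along coordinates $1,\dots,r$ between the global minimum $1$ and the global maximum $N$ is exactly $N-1$, matching the $N-1$ consecutive steps) shows the chain is as long as possible, i.e.\ maximal. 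Assembling the three equivalences gives the Lemma. I expect the main obstacle to be the maximality direction — in particular, rigorously arguing that strictness anywhere in \eqref{eqn:clique-condition}, or a jump of size $\ge 2$ in \eqref{condition:move-1}, always leaves room for an extra interleaving vertex inside $\Lambda_{r,d}(N)$, which requires carefully checking that the inserted vertex is still a genuine $<_d$-chain with entries in $[1,N]$ (the $<_d$ spacing is automatically preserved since we only shift one coordinate by $1$ and the original spacings were strict in the $<_d$ sense, i.e.\ gaps $>d$); the clean bookkeeping device is to count, for a fixed coordinate $j$, how many of the $N-1$ consecutive steps use $j_k=j$, observe this equals $\bdbeta_N^{\,j}-\bdbeta_1^{\,j}$, and then sum over $j$.
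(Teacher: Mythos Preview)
Your proposal is correct and lands on the same key ingredients as the paper: sortedness of two $<_d$-chains is the interleaving condition, and the telescoping count $\sum_j(\bdbeta_N^j-\bdbeta_1^j)=N-1$ forces all the boundary equalities. The paper's proof is much terser: it asserts \eqref{condition:move-1} from maximality in one line (your insertion argument is the content behind that line), and then derives \eqref{condition:clique-end} \emph{entirely} from the telescoping inequality
\[
N-1=\sum_{j=1}^r(\bdbeta_N^j-\bdbeta_1^j)\le \sum_{j=1}^{r-1}(\bdbeta_1^{j+1}-\bdbeta_1^j)+(N-\bdbeta_1^r)=N-\bdbeta_1^1\le N-1,
\]
which forces $\bdbeta_1^1=1$, $\bdbeta_N^r=N$, and $\bdbeta_N^j=\bdbeta_1^{j+1}$ simultaneously. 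Your separate insertion arguments for the three parts of \eqref{condition:clique-end} are a valid alternative, but the one for junction tightness ($\bdbeta_N^j<\bdbeta_1^{j+1}$ implies one can insert a vertex) is the delicate spot you anticipated: bumping a single coordinate of $\bdbeta_N$ or $\bdbeta_1$ by $1$ need not stay a $<_d$-chain when the adjacent gap is exactly $d+1$, so that step would require further casework. Since you already have \eqref{condition:move-1} at that point, the counting device you flag at the end is both cleaner and exactly what the paper does.
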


\begin{proof}
    If $F$ is a maximal clique of $\calG$, then the pair $(\bdx_{\bdbeta_i},\bdx_{\bdbeta_j})$ is sorted whenever $1\le i<j \le N$. In other words, we will obtain the condition (\ref{eqn:clique-condition}). Since the clique $F$ is maximal, for each $k=1,2,\dots,N-1$, there exists a unique $j_k$ such that the condition (\ref{condition:move-1}) is met. Consequently, we have
    \begin{align*}
        N-1&=\sum_{j=1}^r(\bdbeta_N^j-\bdbeta_1^j)=\sum_{j=1}^{r-1}(\bdbeta_N^j-\bdbeta_1^j)+(\bdbeta_N^r-\bdbeta_1^r)\\
        & \le \sum_{j=1}^{r-1}(\bdbeta_1^{j+1}-\bdbeta_1^j)+(N-\bdbeta_1^r)= N-\bdbeta_1^1 \le N-1.
    \end{align*}
    Therefore, we obtain the condition (\ref{condition:clique-end}).

    On the other hand, it is not difficult to see that the conditions \eqref{eqn:clique-condition},  \eqref{condition:move-1}, and  \eqref{condition:clique-end} together are sufficient for the set $F$ in \eqref{eqn:F-beta} to be a maximal clique of $\calG$. 
\end{proof}

The following remark gives us a better view of the maximal clique with respect to the running indices. 

\begin{Remark}
    \label{rmk:moving-seq}
    Notice that given the initial sequence $\bdbeta_1$, the maximal clique $F$ in \eqref{eqn:F-beta} is completely determined by the \emph{moving sequence}:
    \begin{equation}
        \bdlambda_F= (j_1,j_2,\dots,j_{N-1}),
        \label{eqn:moving-seq}
    \end{equation}
    where the $j_k$'s are given by the condition \eqref{condition:move-1}. We will call the integer $j_k$ as the \emph{movement} from $Y_{\bdbeta_k}$ to $Y_{\bdbeta_{k+1}}$ and call it a \emph{$j_k$-movement} in $\bdlambda_F$.

    On the other hand, notice that the cardinalities
    \begin{equation}
        \left|\Set{k\mid j_k=t}\right|=\bdbeta_1^{t+1}-\bdbeta_1^t \qquad \text{for $t=1,2,\dots,r-1$,}
        \label{eqn:mv-determine-beta1-1}
    \end{equation}
    and
    \begin{equation}
        \left|\Set{k\mid j_k=r}\right|=N-\bdbeta_1^r,
        \label{eqn:mv-determine-beta1-2}
    \end{equation}
    by the condition \eqref{condition:clique-end}. Since $\bdbeta_1^1=1$, the moving sequence $\bdlambda_F$ in \eqref{eqn:moving-seq} completely determines the sequence $\bdbeta_1$, and consequently, determines the maximal clique $F$ itself. In short, knowing the maximal clique $F$ is equivalent to knowing the moving sequence $\bdlambda_F$.
\end{Remark}

\subsection{Complete intersection quotients}
\label{ss:linear-quotients}
Next, we consider the following total order $\succ$ on the minimal monomial generating set of $(\ini(P))^{\vee}$:
\[
    \widehat{\bdY^{F_1}}\succ \widehat{\bdY^{F_2}} \qquad \Leftrightarrow \qquad \bdY^{F_1}>_{\lex} \bdY^{F_2}.
\]
We will also write it correspondingly as $F_1\succ F_2$. 
The very first maximal clique of $\calG$ will be denoted sometimes by \emph{$F_{-1}$}. As a reminder, we have two different orderings on the variables. Consequently, we have two different strategies for this ordering. They are tailored to the different cases studied later.

In this subsection, the plan is to show, for each maximal clique $F$ as in \eqref{eqn:F-beta}, the colon ideal
\begin{equation}
    I_F\coloneqq \braket{\widehat{\bdY^{F'}}\mid \text{$F'$ is also a maximal clique of $\calG$ such that $F'\succ F$}}:\widehat{\bdY^{F}}
    \label{eqn:colon-ideal}
\end{equation}
is a complete intersection.
Again, we will only consider the case when $c> r+d$; the case when $r\le c \le r+d$ will be left in the final subsection.
Now, take arbitrary $\bdY^{H}$ in the minimal monomial generating set of the colon ideal $I_F$. Therefore, there exists some $F'\succ F$ such that
\[
    \braket{\widehat{\bdY^{F'}}}:\widehat{\bdY^{F}}=\braket{\bdY^{H}}.
\]
Whence, $H=F\setminus F'$. Suppose for later reference that
\begin{equation}
    F'=\Set{Y_{\bdalpha_1}>\cdots>Y_{\bdalpha_N}}.
    \label{eqn:F-alpha}
\end{equation}
The minimal monomial generating set of $I_F$ consists of two types of squarefree monomials, and we will discuss them in detail.

\begin{Observation} 
    [\textbf{Tail generators}]
    \label{case:tail}
    Suppose that $\bdalpha_1\ne \bdbeta_1$. Since $\bdY^{F'}\succ \bdY^{F}$, $Y_{\bdalpha_1}>Y_{\bdbeta_1}$ and consequently
    $Y_{\bdbeta_1}$ is not the very first ring variable  $Y_{1,2+d,3+2d,\dots,r+(r-1)d}$ of $\KK[\bdY]$.

    For each $j=1,2,\dots,r-1$, we introduce the invariant
    \begin{equation}
        \delta_j\coloneqq \min\Set{i| \bdbeta_i^{\{j,...,r-1\}}=\bdbeta_N^{\{j,...,r-1\}}}
        \label{eqn:delta}
    \end{equation}
    for partially measuring how far the variables in $F$ are from the last one.
    It is clear that
    \[
        \delta_1\ge \delta_2\ge \cdots \ge \delta_{r-1}>1
    \]
    from the definition and the condition \eqref{condition:clique-end}.
    Let $k\le r-1$ be the least positive integer such that
    \begin{equation}
        \underline{1<_d 2+d <_d  3+2d \cdots <_d k+(k-1)d} <_d 
        \underline{\bdbeta_{\delta_k-1}^k <_d \bdbeta_{\delta_k-1}^{k+1} <_d \cdots <_d \bdbeta_{\delta_k-1}^{r-1}}.
        \label{sequence-gamma}
    \end{equation}
    Notice that $\bdbeta_{\delta_{r-1}-1}^{r-1}+1=\bdbeta_{\delta_{r-1}}^{r-1}=\bdbeta_N^{r-1}=\bdbeta_1^r$ and $Y_{\bdbeta_1}$ is not the first variable of the ring. Thus, $\bdbeta_{\delta_{r-1}-1}^{r-1}\ge \bdbeta_1^r-1\ge r+(r-1)d$. Furthermore, note that the two underlined parts hold automatically. Thus,
    the condition in \eqref{sequence-gamma} always holds for $k=r-1$ and the above least $k\le r-1$ is well-defined. Furthermore, there is actually only one inequality to check:
    \begin{equation}
        k+(k-1)d <_d \bdbeta_{\delta_k-1}^k, \quad \text{i.e.,}\quad k+kd < \bdbeta_{\delta_k-1}^k.
        \label{eqn:delta-k}
    \end{equation}
    For each $j\le r-1$, notice that 
    \[
        \bdbeta_{\delta_j-1}^j\ge \bdbeta_{\delta_j}^j-1=\bdbeta_N^j-1=\bdbeta_1^{j+1}-1\ge j+1+(j+1-1)d-1=j+jd.
    \]
    Thus, by the minimality of the chosen $k$, one actually has
    \begin{equation}
        \bdbeta_1^j=j+(j-1)d \qquad \text{ for $j=1,2,\dots,k$.}
        \label{eqn:1-k-minimal}
    \end{equation}

    Now, for the fixed $k$ introduced above, we call the sequence in \eqref{sequence-gamma} as $\bdgamma$. It is clear that $\bdbeta_1^{\le k}=\bdgamma^{\le k}$ by \eqref{eqn:1-k-minimal}. Meanwhile, for $j=k+1,\dots,r$, one has
    \[
        \bdgamma^j=\bdbeta_{\delta_k-1}^{j-1}\le \bdbeta_N^{j-1}=\bdbeta_1^j.
    \]
    Thus, $\bdbeta_1 \ge \bdgamma$ componentwise. Meanwhile, by the choice of $\delta_k$, $\bdbeta_{\delta_k-1}^{\{k,...,r-1\}}\ne\bdbeta_N^{\{k,...,r-1\}}$. Consequently $Y_{\bdbeta_1}<Y_{\bdgamma}$ in both cases in \eqref{2-orderings}. Now, one can easily construct some maximal clique $F''$ of $\calG$ as
    \begin{align*}
        F''=\bigg\{Y_{\bdgamma_1=\bdgamma}>Y_{\bdgamma_2}>\cdots>Y_{\bdgamma_{k'-1}}>Y_{\bdgamma_{k'}=\bdbeta_1}>Y_{\bdgamma_{k'+1}=\bdbeta_2}>\cdots>
        \\
        Y_{\bdgamma_{k'+\delta_k-2}=\bdbeta_{\delta_k-1}} >Y_{\bdgamma_{k'+\delta_k-1}}>\cdots>Y_{\bdgamma_N} \bigg\},
    \end{align*}
    using \Cref{MaxCli}.
    Here $k'-1=\sum_{j=1}^{r}\bdbeta_1^j-\sum_{j=1}^{r}\bdgamma^j=\sum_{j=k+1}^{r}(\bdbeta_1^j-\bdgamma^j)$. And it is clear that
    \[
        F\setminus F''=\Set{Y_{\bdbeta_{\delta_k}}>\cdots >Y_{\bdbeta_N}}.
    \]

    Now, back to the maximal clique $F'\succ F$ with $\bdalpha_1\ne \bdbeta_1$. We claim that $F\setminus F'' \subseteq F\setminus F'$. Otherwise, there exists some $i$ with $\delta_k\le i\le N$ such that $Y_{\bdbeta_i}\in F'$. Notice that $\bdbeta_1^{\le k}\le \bdalpha_1^{\le k}$ componentwise by the condition \eqref{eqn:1-k-minimal}. Meanwhile, 
    \[
        \bdbeta_1^{\ge k+1}=\bdbeta_N^{\{k,...,r-1\}}=\bdbeta_i^{\{k,...,r-1\}}\stackrel{\text{comp.}}{\le} \bdalpha_N^{\{k,...,r-1\}}=\bdalpha_1^{\ge k+1}.
    \]
    Here, the equalities are due to the choice of $\delta_k$ and the condition \eqref{condition:clique-end}, while the componentwise inequality is due to the condition \eqref{eqn:clique-condition} and the fact that $Y_{\bdbeta_i}\in F'$. Thus, we have $\bdalpha_1\ge \bdbeta_1$ componentwise. Consequently, $Y_{\bdalpha_1}\le Y_{\bdbeta_1}$ in term of the lexicographical order that we define in \eqref{2-orderings}, which is a contradiction. This confirms our claim.

    Now, by the minimality of $H$, we must have $H=\{Y_{\bdbeta_{\delta_k}}>\cdots>Y_{\bdbeta_N}\}$. We will call the monomial $\bdY^H$ as the \emph{tail generator} of the corresponding colon ideal $I_F$. 
\end{Observation}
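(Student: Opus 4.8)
The target is the claim packaged inside \Cref{case:tail}: when $\bdalpha_1\neq\bdbeta_1$, the squarefree monomial $\bdY^{H}=\bdY^{F\setminus F'}$ that is forced into a minimal generating set of the colon ideal $I_F$ of \eqref{eqn:colon-ideal} is precisely the \emph{tail generator} $\bdY^{H}$ with $H=\{Y_{\bdbeta_{\delta_k}}>\cdots>Y_{\bdbeta_N}\}$, for the least index $k\le r-1$ satisfying the chain condition \eqref{sequence-gamma}. The plan has two halves: (i) exhibit \emph{one} explicit maximal clique $F''\succ F$ with $F\setminus F''=\{Y_{\bdbeta_{\delta_k}}>\cdots>Y_{\bdbeta_N}\}$, so that $\bdY^{F\setminus F''}$ lies in $I_F$; and (ii) show $F\setminus F''\subseteq F\setminus F'$ for \emph{every} competitor $F'\succ F$ with $\bdalpha_1\neq\bdbeta_1$. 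Together with the minimality of the generator $\bdY^{H}$, these force $H=F\setminus F''$.

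For (i) I would first set up the stabilization indices $\delta_j$ of \eqref{eqn:delta}, record the monotonicity $\delta_1\ge\cdots\ge\delta_{r-1}>1$ (the strict part from \eqref{condition:clique-end}), and then check that the least $k\le r-1$ satisfying \eqref{sequence-gamma} is well defined: the two underlined segments there are automatically $<_d$-chains, so only the single inequality \eqref{eqn:delta-k} needs verification, and it always holds at $k=r-1$ since $\bdbeta_{\delta_{r-1}-1}^{r-1}\ge\bdbeta_1^r-1\ge r+(r-1)d$. A short estimate $\bdbeta_{\delta_j-1}^j\ge\bdbeta_N^j-1=\bdbeta_1^{j+1}-1\ge j+jd$ then forces, by minimality of $k$, the normalization $\bdbeta_1^j=j+(j-1)d$ for $j\le k$, i.e. \eqref{eqn:1-k-minimal}. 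Writing $\bdgamma$ for the $r$-tuple appearing in \eqref{sequence-gamma}, I would verify $\bdgamma\le\bdbeta_1$ componentwise (using $\bdbeta_1^{\le k}=\bdgamma^{\le k}$ and $\bdgamma^j=\bdbeta_{\delta_k-1}^{j-1}\le\bdbeta_1^j$ for $j>k$) together with $Y_{\bdgamma}>Y_{\bdbeta_1}$ in \emph{both} orders of \eqref{2-orderings}, the latter because $\bdbeta_{\delta_k-1}$ has not yet stabilized in the slots $\{k,\dots,r-1\}$. Interpolating a unit-step $<_d$-chain from $\bdgamma$ up to $\bdbeta_1$ and then continuing along $F$ produces, via \Cref{MaxCli}, the desired $F''\succ F$ with $F\setminus F''=\{Y_{\bdbeta_{\delta_k}}>\cdots>Y_{\bdbeta_N}\}$.

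The substance is (ii). Given an arbitrary $F'=\{Y_{\bdalpha_1}>\cdots>Y_{\bdalpha_N}\}\succ F$ with $\bdalpha_1\neq\bdbeta_1$, suppose for contradiction that some $Y_{\bdbeta_i}\in F'$ with $\delta_k\le i\le N$. Then $\bdbeta_1^{\le k}\le\bdalpha_1^{\le k}$ componentwise, by \eqref{eqn:1-k-minimal} and the clique inequalities \eqref{eqn:clique-condition}; and the tail identification $\bdbeta_1^{\ge k+1}=\bdbeta_N^{\{k,\dots,r-1\}}=\bdbeta_i^{\{k,\dots,r-1\}}$ (this is exactly where the choice of $\delta_k$ and \eqref{condition:clique-end} enter), combined with $\bdbeta_i^{\{k,\dots,r-1\}}\le\bdalpha_N^{\{k,\dots,r-1\}}=\bdalpha_1^{\ge k+1}$ coming from $Y_{\bdbeta_i}\in F'$, gives $\bdbeta_1\le\bdalpha_1$ componentwise, hence $Y_{\bdalpha_1}\le Y_{\bdbeta_1}$ in the order of \eqref{2-orderings}, contradicting $\bdalpha_1\neq\bdbeta_1$ and $Y_{\bdalpha_1}>Y_{\bdbeta_1}$. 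So no such $i$ exists, $\{Y_{\bdbeta_{\delta_k}},\dots,Y_{\bdbeta_N}\}\subseteq F\setminus F'$, and minimality of $H$ forces $H=F\setminus F''$, which we then christen the tail generator of $I_F$.

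I expect the main obstacle to be the two-sided bookkeeping in the componentwise comparison of step (ii): one must simultaneously control the first $k$ coordinates via the normalization \eqref{eqn:1-k-minimal}, control the last $r-k$ coordinates via stabilization at $\delta_k$, and glue the two consistently and \emph{uniformly} in both the $lex$-type and $revlex$-type orders of \eqref{2-orderings}, since the whole description of the tail generator is meant to be order-agnostic. A secondary subtlety, easy to overlook, is confirming that the interpolating clique $F''$ genuinely satisfies all of \eqref{eqn:clique-condition}--\eqref{condition:clique-end} of \Cref{MaxCli} -- in particular that each step along the inserted $<_d$-chain is a unit increment in exactly one coordinate, so that $F''$ really is a maximal clique and really satisfies $F''\succ F$.
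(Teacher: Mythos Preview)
Your proposal is correct and follows essentially the same approach as the paper: you set up the stabilization indices $\delta_j$, pin down the least $k$ via \eqref{eqn:delta-k} and the normalization \eqref{eqn:1-k-minimal}, build the interpolating clique $F''$ starting at $\bdgamma$, and then run the identical componentwise contradiction argument for part (ii). The only cosmetic difference is that the paper leaves the verification that $F''$ is a genuine maximal clique implicit (via ``one can easily construct \ldots using \Cref{MaxCli}''), whereas you flag it explicitly as a secondary subtlety; otherwise the logic, the order of steps, and the key inequalities all coincide.
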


\begin{Observation}
    [\textbf{Corner generators}]
    \label{case:corner}

    Suppose that $\bdalpha_1=\bdbeta_1$, whence $\bdalpha_N=\bdbeta_N$ by the condition \eqref{condition:clique-end}. 
    Furthermore, if $\bdlambda_F$ and $\bdlambda_{F'}$ are the corresponding moving sequences of $F$ and $F'$ respectively, then $\bdlambda_F$ and $\bdlambda_{F'}$ coincide if we deem them as multi-sets by the conditions \eqref{eqn:mv-determine-beta1-1} and \eqref{eqn:mv-determine-beta1-2}.

    Depending on the linear order of the variables given at \eqref{2-orderings}, we have two cases.

    \begin{enumerate}[i]
        \item \textbf{The ordering is of $lex$ type.}
            In this case, if we also consider $\bdlambda_F$ and $\bdlambda_{F'}$ as tuples in $\ZZ^{r-1}$, then $F'\succ F$ if and only if
            \begin{equation}
                \text{the leftmost nonzero component of the difference $\bdlambda_{F'}-\bdlambda_{F}$ is positive.}
                \label{characterization-1}
            \end{equation}
            Now consider the set
            \begin{equation}
                K_F\coloneqq \{1\le k\le N-2 \mid  j_k<j_{k+1}\}
                \label{eqn:K_F}
            \end{equation}
            where the $j_k$'s are given in \eqref{condition:move-1}.
            For each fixed $k\in K_F$, we then have some maximal clique $F''$ with the moving sequence
            \[
                \bdlambda_{F''}=(j_1,\dots,j_{k-1},j_{k+1},j_k,j_{k+2},\dots,j_{N-1}).
            \]
            Here, one uses condition \eqref{eqn:clique-condition} to verify the existence (compatibility) of $F''$.  It is clear that $F''\succ F$ and $F\setminus F''=\{Y_{\bdbeta_{k+1}}\}$.

            It remains to show that a minimal monomial generator of the colon ideal $I_F$ comes from this situation if and only if it belongs to
            \begin{equation}
                \Set{Y_{\bdbeta_{k+1}}\mid k\in K_F}.
                \label{eqn:corner-type-generators}
            \end{equation}
            Notice that the ``if'' part has been shown by the above argument. And the indices in $K_F$ cut $\bdlambda_F$ into a collection of weakly decreasing subsequences.

            As a quick example, when $(r,c,d)=(3,6,1)$, we have a maximal clique
            \[
                F=\Set{Y_{1,3,5}\xrightarrow{j_1} Y_{1,3,6}\xrightarrow{j_2} Y_{1,3,7}\xrightarrow{j_3} Y_{1,4,7}\xrightarrow{j_4} Y_{1,4,8}\xrightarrow{j_5} Y_{1,5,8}\xrightarrow{j_6} Y_{2,5,8}\xrightarrow{j_7} Y_{3,5,8}}.
            \]
            The moving sequence is 
            \[
                \bdlambda_F=(j_1,j_2,\dots,j_7)=(\underline{3,3,2},\underline{3, 2, 1, 1}).
            \]
            Whence, $K_F=\{3\}$ and the expected generator in this situation is solely $Y_{1,4,7}$. And the index $3$ in $K_F$ cuts $\bdlambda_F$ into the underlined format.

            Now, back to the argument for $F'$ in \eqref{eqn:F-alpha} such that the corresponding monomial $\bdY^H$ is a minimal monomial generator of the colon ideal $I_F$. 
            Let $A=\Set{1\le k\le N-1\mid \bdalpha_{k+1}\ne \bdbeta_{k+1}}$. 
            It remains to show that $A\cap K_F\ne \varnothing$ due to the expected minimal generating set \eqref{eqn:corner-type-generators} and the fact that $H=F\setminus F'$.  
            Notice that the maximal cliques $F$ and $F'$ surely contain
            $\Set{Y_{\bdbeta_{k+1}}\mid k\notin A}\cup\Set{Y_{\bdbeta_1},Y_{\bdbeta_N}}$.
            Now, suppose for contradiction that $A\cap K_F= \varnothing$. Consequently, both $F$ and $F'$ contain
            \begin{equation*}
                \Set{Y_{\bdbeta_{k+1}}\mid k\in K_F}\cup\Set{Y_{\bdbeta_1},Y_{\bdbeta_N}}=\Set{Y_{\bdbeta_{j_1}}>Y_{\bdbeta_{j_2}}>\cdots>Y_{\bdbeta_{j_t}}}.
            \end{equation*}
            It is clear that $j_1=1$ and $j_t=N$. Furthermore, for each $s\in [t]$, we have $\bdbeta_{j_s}=\bdalpha_{j_s}$. For each $s\in [t-1]$, suppose that $\bdlambda_s$ is the sequence of movement in $F$ from $\bdbeta_{j_{s}}$ to $\bdbeta_{j_{s+1}}$ and similarly introduce $\bdlambda_s'$ for $F'$. It is clear that $\bdlambda_s$ is a weakly decreasing sequence, and 
            \[
                \left|\Set{\text{$r'$-movements in $\bdlambda_s$}}\right|= \bdbeta_{j_{s+1}}^{r'}-\bdbeta_{j_{s}}^{r'}=
                \left|\Set{\text{$r'$-movements in $\bdlambda_s'$}}\right|
            \]
            for every $r'\in [r]$. 
            Thus, $F\succ F'$ by the characterization in \eqref{characterization-1}, 
            which contradicts the assumption that $F'\succ F$. 

            In short, in this $lex$ type ordering case with the assumption that $\bdalpha_1=\bdbeta_1$, the monomial $\bdY^H$ is a minimal generator of $I_F$ if and only if this monomial belongs to the set in \eqref{eqn:corner-type-generators}.

        \item \textbf{The ordering is of $revlex$ type.}
            In this case, if we also consider $\bdlambda_F$ and $\bdlambda_{F'}$ as tuples in $\ZZ^{r-1}$, then $F'\succ F$ if and only if the leftmost nonzero component of the difference $\bdlambda_{F'}-\bdlambda_{F}$ is \emph{negative}.

            Now, let $K_F'$ be the set of all indices $k$ such that $j_k>j_{k+1}$ with the \emph{additional requirement} that we have some maximal clique $F''$ with
            \[
                \bdlambda_{F''}=(j_1,\dots,j_{k-1},j_{k+1},j_k,j_{k+2},\dots,j_{N-1}).
            \]
            Notice that, unlike the $lex$ type case, such a maximal clique $F''$ does not exist by default. Now, for any such $k$ in $K_F'$ with the accompanied $F''$, it is clear that $F''\succ F$ and $F\setminus F''=\{Y_{\bdbeta_{k+1}}\}$. One can argue as in the $lex$ type case that the minimal monomial generators coming from this situation are purely
            \begin{equation}
                \Set{Y_{\bdbeta_{k+1}}\mid k\in K_F'}.
                \label{eqn:corner-type-generators-revlex}
            \end{equation}
    \end{enumerate}

    Two maximal cliques in the case $(r,c,d)=(2,8,1)$ are pictured in \Cref{fig:dim2}. 
    \begin{figure}[htb]
        \scalebox{0.6}{
        \begin{tikzpicture}[scale=1.3]
        \draw[ultra thick] (1,4)--(1,5)--(1,6)--(2,6)--(2,7)--(3,7)--(3,8)--(3,9)--(4,9);
        \node[below right] at (1,4) {$Y_{1,4}$};
        \node[below right] at (1,5) {$Y_{1,5}$};
        \node[below right] at (1,6) {$Y_{1,6}$};
        \node[below right] at (2,6) {$Y_{2,6}{}^\ast$};
        \node[below right] at (2,7) {$Y_{2,7}$};
        \node[below right] at (3,7) {$Y_{3,7}{}^\ast$};
        \node[below right] at (3,8) {$Y_{3,8}$};
        \node[below right] at (3,9) {$Y_{3,9}$};
        \node[below right] at (4,9) {$Y_{4,9}{}^\vartriangle$};
        \draw[step=1,gray, very thin] (0.5,3.5) grid (4.5,9.5);
        \end{tikzpicture}}
        \qquad \qquad
        \scalebox{0.6}{
        \begin{tikzpicture}[scale=1.3]
        \draw[ultra thick] (1,4)--(1,5)--(1,6)--(2,6)--(2,7)--(3,7)--(3,8)--(4,8)--(4,9);
        \node[below right] at (1,4) {$Y_{1,4}$};
        \node[below right] at (1,5) {$Y_{1,5}$};
        \node[below right] at (1,6) {$Y_{1,6}$};
        \node[below right] at (2,6) {$Y_{2,6}{}^\ast$};
        \node[below right] at (2,7) {$Y_{2,7}$};
        \node[below right] at (3,7) {$Y_{3,7}{}^\ast$};
        \node[below right] at (3,8) {$Y_{3,8}$};
        \node[below right] at (4,8) {$Y_{4,8}{}^{\ast,\vartriangle}$};
        \node[below right] at (4,9) {$Y_{4,9}{}^\vartriangle$};
        \draw[step=1,gray, very thin] (0.5,3.5) grid (4.5,9.5);

        \end{tikzpicture}}
        \caption{Two maximal cliques for $(r,c,d)=(2,8,1)$ in the $lex$ type ordering. Variables in the respective tail generators are marked with $\vartriangle$ and corner generators are marked with $\ast$.} 
        \label{fig:dim2}
    \end{figure}
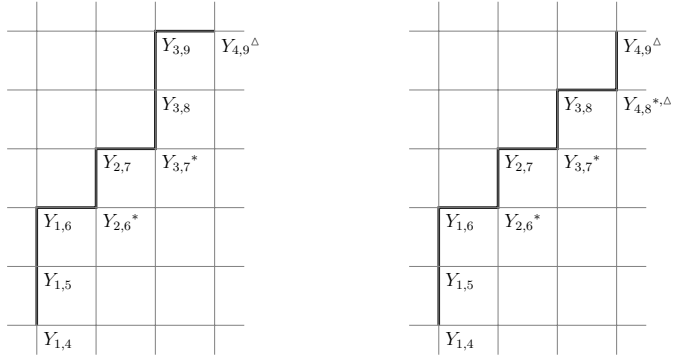
    The linear generators generated by the current method are marked with $\ast$.
    For the obvious pictorial reasons, we will call the minimal generators given by either \eqref{eqn:corner-type-generators} or \eqref{eqn:corner-type-generators-revlex}, depending on the ordering of the variables, as the \emph{corner generators} of the corresponding colon ideal $I_F$.
\end{Observation}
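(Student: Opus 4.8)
The plan is to determine, in the sub-case $\bdalpha_1=\bdbeta_1$ of \Cref{case:corner} (the sub-case $\bdalpha_1\ne\bdbeta_1$ having been settled in \Cref{case:tail}), precisely which monomials $\bdY^H$, with $H=F\setminus F'$ and $F'\succ F$, are minimal generators of $I_F$, and to match them with the single variables indexed by $K_F$ in the $lex$ ordering and by $K_F'$ in the $revlex$ ordering. First I would fix the lattice-path dictionary: by \Cref{rmk:moving-seq}, a maximal clique with first vertex $\bdbeta_1$ is the same datum as a monotone unit-step lattice path in $\ZZ^r$ starting at $\bdbeta_1$, encoded by its moving sequence $\bdlambda_F\in\{1,\dots,r\}^{N-1}$; and the position of any vertex $v$ on such a path is the predetermined integer $1+\sum_j(v^j-\bdbeta_1^j)$. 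Hence, if $\bdalpha_1=\bdbeta_1$, then $\bdalpha_N=\bdbeta_N$ by \eqref{condition:clique-end}, the tuples $\bdlambda_F$ and $\bdlambda_{F'}$ are rearrangements of one another, $F\cap F'$ is exactly the set of vertices $Y_{\bdbeta_i}$ with $\bdbeta_i=\bdalpha_i$, and therefore $H=F\setminus F'=\Set{Y_{\bdbeta_{k+1}}\mid k\in A}$ with $A=\Set{k\mid\bdalpha_{k+1}\ne\bdbeta_{k+1}}$; minimality of $\bdY^H$ means that no $F''\succ F$ satisfies $F\setminus F''\subsetneq H$.

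Next I would prove the order criterion \eqref{characterization-1}. Let $s$ be the first index at which the lists $(\bdbeta_i)$ and $(\bdalpha_i)$ diverge; then $\bdbeta_s$ and $\bdalpha_s$ arise from the common vertex $\bdbeta_{s-1}$ by bumping one coordinate, namely $j^F_{s-1}$ and $j^{F'}_{s-1}$ respectively. A direct comparison of the two squarefree monomials $\bdx_{\bdbeta_s}$ and $\bdx_{\bdalpha_s}$ (locate the smallest index in the symmetric difference of their supports) shows that bumping the coordinate with the larger index produces the $>$-larger vertex in the $lex$ ordering, so $Y_{\bdalpha_s}>Y_{\bdbeta_s}$ exactly when $j^F_{s-1}<j^{F'}_{s-1}$; the $revlex$ ordering reverses this. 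On the other hand, since the vertex lists of $F$ and $F'$ are $>$-decreasing and agree before position $s$, the $>$-largest vertex lying in the symmetric difference of the two vertex sets is $\max\Set{Y_{\bdbeta_s},Y_{\bdalpha_s}}$, and therefore $\bdY^{F'}>_{\lex}\bdY^F$ if and only if $Y_{\bdalpha_s}>Y_{\bdbeta_s}$. Composing these two equivalences yields \eqref{characterization-1} in both orderings.

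For the ``if'' direction in the $lex$ case, fix $k\in K_F$, i.e.\ $j_k<j_{k+1}$: transposing these two entries gives $\bdlambda_{F''}=(j_1,\dots,j_{k-1},j_{k+1},j_k,j_{k+2},\dots)$, which is a bona fide maximal clique because only the single intermediate vertex $\bdbeta_{k+1}$ is altered and \eqref{eqn:clique-condition} guarantees the new vertex is again a $<_d$-chain with entries in $\{1,\dots,N\}$; then $F''\succ F$ by \eqref{characterization-1} and $F\setminus F''=\{Y_{\bdbeta_{k+1}}\}$, so $Y_{\bdbeta_{k+1}}\in I_F$ and, being a variable, is a minimal generator. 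The ``only if'' direction is the crux. Given $F'\succ F$ with $\bdalpha_1=\bdbeta_1$ and $\bdY^H$ a minimal generator, I claim $A\cap K_F\ne\varnothing$. If not, then $K_F$ is disjoint from $A$, so $F$ and $F'$ share every vertex $Y_{\bdbeta_{k+1}}$ with $k\in K_F$, together with $Y_{\bdbeta_1}$ and $Y_{\bdbeta_N}$. Cutting $\bdlambda_F$ at the ascent positions $K_F$ and the two endpoint positions writes $\bdlambda_F$ as a concatenation of weakly decreasing blocks; since each block joins two vertices common to $F$ and $F'$, the corresponding block of $\bdlambda_{F'}$ is a rearrangement of the same multiset. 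A weakly decreasing tuple is the lexicographically largest arrangement of its multiset, so on the first block where $F$ and $F'$ differ, the block of $\bdlambda_F$ strictly dominates, whence $F\succ F'$, contradicting $F'\succ F$. Thus there is $k_0\in A\cap K_F$; the variable $Y_{\bdbeta_{k_0+1}}$ is then one of the corner generators produced above, and it divides $\bdY^H$ because $k_0\in A$, so minimality forces $\bdY^H=Y_{\bdbeta_{k_0+1}}$. Combined with the ``if'' direction, this shows the corner-case minimal generators of $I_F$ are precisely $\Set{Y_{\bdbeta_{k+1}}\mid k\in K_F}$. The $revlex$ case is parallel: one cuts $\bdlambda_F$ at its descents into weakly increasing (hence lexicographically smallest) blocks, and the one additional subtlety is that transposing an adjacent descent need not yield a clique satisfying \eqref{eqn:clique-condition}, which is exactly the extra requirement built into the definition of $K_F'$, so one restricts the decomposition to those descents.

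I expect the main obstacle to be the ``only if'' step: showing that whenever $H=F\setminus F'$ is not a single vertex it can be shortened by a one-vertex modification that still dominates $F$, so that the minimal ones must be singletons indexed by $K_F$ (resp.\ $K_F'$). The cleanest route I see is the block decomposition of $\bdlambda_F$ at its ascents (resp.\ descents) combined with the lexicographic extremality of the monotone rearrangement of a multiset; and making the $revlex$ version airtight — where adjacent transpositions can fail the realizability condition \eqref{eqn:clique-condition} — is the fiddly point.
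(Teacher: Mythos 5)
Your proposal follows essentially the same route as the paper's own argument: the same characterization of $\succ$ via the first differing movement of the moving sequences, the same adjacent-transposition construction at ascents (resp.\ realizable descents) for the ``if'' direction, and the same multiset/weakly-decreasing-block contradiction showing $A\cap K_F\ne\varnothing$ for the ``only if'' direction, with minimality then forcing $\bdY^H$ to be a single corner variable. The extra details you supply (that a vertex determines its position on the path, the explicit verification of \eqref{characterization-1}, and the validity of the transposed clique) are precisely what the paper leaves implicit, and your $revlex$ case is left at the same level of sketchiness as the paper's one-line remark that one argues as in the $lex$ case.
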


\begin{Remark}
    \label{rmk:tail-corner}
    As a reminder, the tail generator may or may not belong to the minimal monomial generating set of $I_F$, depending on whether it will be canceled out by some corner generator. 
    For instance, when $(r,c,d)=(2,8,1)$  we will have a maximal clique
    \[
        F=\{Y_{1,4}>Y_{1,5}>Y_{1,6}>Y_{2,6}>Y_{2,7}>Y_{3,7}>Y_{3,8}>Y_{4,8}>Y_{4,9}\};
    \]
    see also the right-hand side picture in \Cref{fig:dim2}.
    Using the $lex$ type ordering in \eqref{2-orderings}, we will have a tail generator 
    \[
        \bdY^H=Y_{4,8}Y_{4,9}
    \]
    regarding this $F$. But it is not difficult to see that the set of corner generators is precisely $\{Y_{2,6},Y_{3,7},Y_{4,8}\}$. Whence, the tail generator is canceled out and the colon ideal $I_F=\braket{Y_{2,6},Y_{3,7},Y_{4,8}}$.
    
    On the other hand, all the corner generators will stay for sure, since they are linear. Whence, the colon ideal $I_F$ is always a monomial complete intersection ideal.
\end{Remark}

\subsection{Upper bound for the projective dimension}
\label{ss:max-length}

Recall that for a homogeneous ideal $I$ and a homogeneous element $f$, the short exact sequence
\[
    0\to R/(I:f) \to R/I \to R/(I+f) \to 0
\]
will induce the well-known fact
\[
    \projdim(R/(I+f))\le \max\{\projdim(R/(I:f))+1,\projdim(R/I)\}.
\]
Whence, if $I=\braket{f_1,\dots,f_s}$ such that for each $t=2,3,\dots,s$, the colon ideal $\braket{f_1,\dots,f_{t-1}}:f_t$ is a complete intersection of codimension $\ell_t$, then
\begin{equation}
    \projdim(I)=\projdim(R/I)-1\le \max_{2\le t\le s} \ell_t.
    \label{eqn:general-upper-bound}
\end{equation}

By the argument in the previous subsection, we see that the minimal monomial generating set of the colon ideal $I_F$ corresponding to the maximal clique $F$ is given by
\begin{enumerate}[a]
    \item some corner generators, which are all linear, and
    \item a unique tail generator, if it exists and is not canceled out by corner generators.
\end{enumerate}
Furthermore, notice that if the starting variable of $F$ is the first variable of the ring, the tail generator does not exist. 

In this subsection, we intend to give an upper bound of the projective dimension of $(\ini(P))^\vee$. Since the tail generator, if exists, only contributes $1$ to the codimension of the complete intersection ideal $I_F$, roughly speaking, it suffices to figure out when the maximal number of the corner generators is achieved.

\begin{Observation}
    \label{lambdaIncrease}
    In the following, we assume that the maximal number of the corners is achieved for some maximal clique $F=\Set{Y_{\bdbeta_1}>\cdots>Y_{\bdbeta_N}}$. We will only explain in the case of $lex$ type ordering of variables, which is sufficient for obtaining the upper bound in \eqref{eqn:max-corner}.
    Notice that the moving sequence $\bdlambda_{F}$ is a concatenation of some strictly increasing subsequences:
    \[
        \bdlambda_{F}= (\bdlambda_1,\bdlambda_2,\dots,\bdlambda_s).
    \]
    Here, we always require that the strictly increasing subsequences above to be maximal with respect to their individual lengths. For instance, for the example in \Cref{rmk:tail-corner}, the moving sequence is
    \[
        \bdlambda_F=(\underline{3},\underline{3},\underline{2},\underline{2},\underline{2,3},\underline{1},\underline{1}),
    \]
    where the maximally strictly increasing subsequences are underlined. In this special case, only $\bdlambda_5=(2,3)$ has length $2$; all other subsequences have only length $1$.

    Now, back to our moving sequence $\bdlambda_F$, we observe that the two sets
    \begin{gather*}
        \Set{2\le i\le N-1\mid i-1\notin K_F}
        \intertext{and}
        \Set{\text{the position of the variable in $F$ after the movements of $\bdlambda_j$}|j=1,2,\dots,s-1}
    \end{gather*}
    are identical, where the set $K_F$ is defined in \eqref{eqn:K_F}.
    Therefore, maximizing the cardinality of $K_F$ is equivalent to minimizing the number of strictly increasing subsequences.
\end{Observation}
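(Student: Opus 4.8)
The plan is to reduce the statement to bookkeeping about where the maximal strictly increasing runs of the moving sequence begin and end. Write $\bdlambda_F=(j_1,\dots,j_{N-1})$ and let $\bdlambda_F=(\bdlambda_1,\dots,\bdlambda_s)$ be its decomposition into maximal strictly increasing runs; introduce the cut positions $0=m_0<m_1<\cdots<m_{s-1}<m_s=N-1$, so that $\bdlambda_j$ consists of the entries $j_{m_{j-1}+1},\dots,j_{m_j}$ for $j=1,\dots,s$.

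First I would record the elementary characterization of the cut positions. By maximality of the runs, an index $1\le k\le N-2$ equals some $m_j$ with $1\le j\le s-1$ precisely when $j_k\ge j_{k+1}$, i.e.\ precisely when $k\notin K_F$ (recall that in the $lex$ type ordering $K_F=\Set{1\le k\le N-2\mid j_k<j_{k+1}}$); the terminal run-end $m_s=N-1$ is automatically excluded since it lies outside $\{1,\dots,N-2\}$. Shifting indices by one, this gives
\[
    \Set{2\le i\le N-1\mid i-1\notin K_F}=\Set{m_j+1\mid 1\le j\le s-1}.
\]

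Next I would identify the second set of the Observation with the same right-hand side. Each movement $j_k$ changes exactly one coordinate of $\bdbeta_k$ by $1$, hence carries $Y_{\bdbeta_k}$ to $Y_{\bdbeta_{k+1}}$; so performing the movements of $\bdlambda_1,\dots,\bdlambda_j$ in order, starting from $Y_{\bdbeta_1}$, lands exactly on $Y_{\bdbeta_{m_j+1}}$. Thus ``the position of the variable in $F$ after the movements of $\bdlambda_j$'' is $m_j+1$, and letting $j$ range over $1,\dots,s-1$ reproduces $\Set{m_j+1\mid 1\le j\le s-1}$. Comparing with the previous display yields the asserted equality of the two sets.

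For the final clause, counting the cut positions gives $|\{m_1,\dots,m_{s-1}\}|=s-1$, and by the characterization above these are exactly the non-ascents of $\bdlambda_F$ inside $\{1,\dots,N-2\}$; hence $|K_F|=(N-2)-(s-1)=N-1-s$. Therefore maximizing $|K_F|$ over all maximal cliques $F$ is the same as minimizing the number $s$ of strictly increasing runs of $\bdlambda_F$. There is no genuine obstacle here: the only point requiring care is the uniform off-by-one shift between a position $k$ of the moving sequence and the position $k+1$ of the corresponding variable of $F$, together with the boundary conventions $m_0=0$, $m_s=N-1$; everything else is a direct translation, and, as noted in the statement, only the $lex$ type case is needed for the upper bound.
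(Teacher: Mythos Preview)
Your argument is correct and is exactly the bookkeeping the paper has in mind; in the paper this is presented as a self-evident observation without a separate proof, and you have simply made the cut-position formalism $m_0<\cdots<m_s$ and the count $|K_F|=(N-2)-(s-1)$ explicit.
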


Consequently, we obtain the following main result of this subsection.     

\begin{Proposition}
    Under the $lex$ type or $revlex$ type ordering of the variables, let $\ell_F$ be the codimension of the complete intersection colon ideal $I_F$ corresponding to the maximal clique $F$ studied above. Then,
    \begin{equation}
        \projdim((\ini(P))^\vee)\le \max_{F\ne F_{-1}}\ell_F,
        \label{eqn:upper-bound}
    \end{equation}
    where $F_{-1}$ is the first maximal clique of $\calG$.
    In particular, when $c> r+d$, we have
    \[
        \projdim((\ini(P))^\vee)\le (N-1)-\ceil{(N-1)/r}+1.
    \]
\end{Proposition}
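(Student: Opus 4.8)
The inequality \eqref{eqn:upper-bound} should follow formally from what has already been set up. The plan is to list the minimal monomial generators of $(\ini(P))^{\vee}$ in $\succ$-decreasing order as $f_1=\widehat{\bdY^{F_{-1}}}\succ f_2\succ\cdots\succ f_s$. By the very definition \eqref{eqn:colon-ideal}, for each $t\ge 2$ the colon ideal $\braket{f_1,\dots,f_{t-1}}:f_t$ is exactly $I_{F_t}$, where $F_t$ is the maximal clique with $\widehat{\bdY^{F_t}}=f_t$; and by \Cref{case:tail}, \Cref{case:corner}, and \Cref{rmk:tail-corner} this is a monomial complete intersection of codimension $\ell_{F_t}$. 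Substituting into the iterated short exact sequence estimate recorded in \eqref{eqn:general-upper-bound} then gives $\projdim((\ini(P))^{\vee})\le\max_{2\le t\le s}\ell_{F_t}=\max_{F\ne F_{-1}}\ell_F$, which is \eqref{eqn:upper-bound}.

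For the ``in particular'' clause, I would fix the $lex$ type ordering of the variables from \eqref{2-orderings} and bound $\ell_F$ uniformly over every maximal clique $F\ne F_{-1}$. By \Cref{case:corner}(i) and \Cref{rmk:tail-corner}, the minimal generating set of $I_F$ consists of the corner generators $\Set{Y_{\bdbeta_{k+1}}\mid k\in K_F}$ together with at most one tail generator, so $\ell_F\le|K_F|+1$. Then I would appeal to \Cref{lambdaIncrease}: writing the moving sequence $\bdlambda_F=(\bdlambda_1,\dots,\bdlambda_s)$ as the concatenation of its maximal strictly increasing runs, the set $K_F$ from \eqref{eqn:K_F} is precisely the set of ascents among the $N-2$ consecutive pairs of $\bdlambda_F$, whereas the $s-1$ non-ascents are exactly the junctions between consecutive runs; hence $|K_F|=(N-2)-(s-1)=(N-1)-s$. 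Since each run $\bdlambda_i$ is a strictly increasing sequence of integers drawn from $\{1,2,\dots,r\}$, it has length at most $r$, so $N-1=\sum_{i=1}^{s}|\bdlambda_i|\le sr$ and therefore $s\ge\ceil{(N-1)/r}$. Combining, $\ell_F\le(N-1)-s+1\le(N-1)-\ceil{(N-1)/r}+1$, and feeding this into \eqref{eqn:upper-bound} gives the asserted bound.

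In this particular statement the only mildly delicate point is the bookkeeping identity $|K_F|=(N-1)-s$ coupled with the pigeonhole estimate on $s$; everything else is formal. The genuine difficulty --- pursued in the later subsections rather than here --- is the \emph{attainment} of this bound, since the estimate above over-counts in two ways: a tail generator, when it is present, contributes at most $1$ to $\ell_F$ but may be canceled by a corner generator (\Cref{rmk:tail-corner}), and, more seriously, in the degenerate range where $c$ is small relative to $r$ and $d$ the multiplicity constraints \eqref{eqn:mv-determine-beta1-1}--\eqref{eqn:mv-determine-beta1-2} on the movements can obstruct the existence of a maximal clique simultaneously realizing $s=\ceil{(N-1)/r}$ and a surviving tail generator. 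Matching the upper bound with an explicit maximal clique, and handling the case $r+d<c<2r+d$ by playing the $lex$ and $revlex$ orderings against one another through iterated mapping cones, is what is carried out in subsections \ref{subsection:c>=2r+d}, \ref{sec:sharp-bound-inter}, and \ref{sec:less-than-r+d}.
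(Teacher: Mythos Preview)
Your proof is correct and follows essentially the same approach as the paper: the inequality \eqref{eqn:upper-bound} is pulled directly from \eqref{eqn:general-upper-bound}, and the ``in particular'' bound comes from the identity $|K_F|=(N-1)-s$ (via \Cref{lambdaIncrease}) together with the pigeonhole bound $s\ge\ceil{(N-1)/r}$ and the at-most-one contribution of the tail generator. The paper's own proof is terser but makes exactly these moves.
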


\begin{proof}
    The first inequality is from \eqref{eqn:general-upper-bound}.   With \Cref{lambdaIncrease}, we notice that each strictly increasing subsequence has a length at most $r$. Therefore,
    \begin{equation}
        |K_F|\le N-1-\ceil{(N-1)/r},
        \label{eqn:max-corner}
    \end{equation}
    since $N-1=|F|-1$ is the length of $\bdlambda_F$. 
    Note that the final $+1$ of the ``in particular" part comes from the potential contribution of the tail generator.
\end{proof} 

\begin{Remark}
    When applying the $revlex$ type of ordering, by symmetry, we will break the moving sequence $\bdlambda_F$ into maximally strictly decreasing subsequence, again, written as $\bdlambda_{F}= (\bdlambda_1,\bdlambda_2,\dots,\bdlambda_s)$. The final movement of each $\bdlambda_j$ will not contribute to any corner generator. Furthermore, some additional movement may fail to contribute, due to the modified description of $K_F'$ in \Cref{case:corner}. For instance, when $(r,c,d)=(3,6,1)$, we have a maximal clique
    \[
        F=\Set{Y_{1,3,5}>Y_{1,3,6}>Y_{1,4,6}>Y_{2,4,6}>Y_{2,4,7}>Y_{2,5,7}>Y_{2,5,8}>Y_{3,5,8}}.
    \]
    The associated moving sequence is $\bdlambda_F=(\underline{3, 2, 1}, \underline{3, 2}, \underline{3, 1})$. However, $Y_{2,5,8}$ is the only corner generator.
\end{Remark}

Actually, we seek more than a mere upper bound. To get over this discrepancy, we need extra tools.
Recall the following key facts of \emph{iterated mapping cones} from \cite[Construction 27.3]{MR2560561}. Let $I$ be an ideal minimally generated by monomials $m_1,\dots,m_r$ in some polynomial ring $S$. For $1\le i\le r$, write $I_i\coloneqq \braket{m_1,\dots,m_i}$. Then we have a short exact sequence of homogeneous modules
\[
    0\to (S/(I_i:m_{i+1}))(m_{i+1})\stackrel{m_{i+1}}\to S/I_i\to S/I_{i+1}\to 0,
\]
where the comparison map is the multiplication by $m_{i+1}$. Here, $(S/(I_i:m_{i+1}))(m_{i+1})$ is obtained by shifting the module $S/(I_i:m_{i+1})$ in multidegree by $m_{i+1}$ so that the comparison map is of degree $0$.
Assume that a multigraded free resolution $\bfF_i$ of $S/I_i$ is already known and that a multigraded free resolution $\bfG_i$ of $S/(I_i:m_{i+1})$ is also known, then one can construct the mapping cone and obtain a multigraded free resolution $\bfF_{i+1}$ of $S/I_{i+1}$.

Now, let us focus on the complete intersection quotients studied above in the polynomial ring $S=\KK[\bdY]$.
For each given maximal clique $F$ which is different from the very first one $F_{-1}$, the colon ideal is a complete intersection ideal $I_F$. Let $G_F\coloneqq G(I_F)$ be the minimal monomial generating set of the monomial ideal $I_F$. Whence, it can be resolved minimally by the Koszul complex, whose top module has rank $1$ at the homological degree $\ell_F=|G_F|$, and is shifted in multidegree by $m_F\coloneqq \prod_{f\in G_F} f$. This module is denoted by $S(m_F)$ in \cite{MR2560561}. Similarly, the free module at the homological degree $\ell_F-1$ of this Koszul complex is $\bigoplus_{f\in G_F}S(m_F/f)$.

\begin{Definition}
    For each maximal clique $F$ studied above, we define the \emph{essential part} of $F$ as
    \[
        \Ess(F)\coloneqq \Set{g\in F| \text{for each $f\in G_F$, $g\dividesnot f$}}=F\setminus \Supp(m_F),
    \]
    where $\Supp(m_F)$ denotes the set of variables that divide the monomial $m_F$.
\end{Definition}

Since the monomial generator of $(\ini(P))^\vee$ that we are dealing with is $\widehat{\bdY^F}$,
\emph{in the mapping cone}, we have to further shift in multidegree by $\widehat{\bdY^F}$. Consequently, we will have a rank $1$ module shifted in multidegree by $m_F \widehat{\bdY^F}=\widehat{\bdY^{\Ess(F)}}$ at the homological degree $\ell_F+1$. Meanwhile, the free module at the homological degree $\ell_F$ of the shifted Koszul complex is
\[
    \bigoplus_{f\in G_F}S(m_F \widehat{\bdY^F}/f)=\bigoplus_{f\in G_F}S(\widehat{\bdY^{\Ess(F)\,\sqcup\,\Supp(f)}}).
\]

Note that the free resolution given by the iterated mapping cone may not necessarily be minimal. 
For instance, when $(r,c,d)=(3,6,1)$ and if we apply the $lex$ type ordering of the variables, then 
\[
    \pd((\ini(P))^\vee)=4<\max_{F\ne F_{-1}}\ell_F=5.
\]
This is also the reason why we need to consider the $revlex$ type of ordering in some cases.

We need the following key observation to determine the regularity of the fiber cone.

\begin{Lemma}
     \label{F_0}
   Suppose that there is some maximal clique $F_0$ satisfying the following three conditions:
    \begin{enumerate}[label=\textup{(C\arabic*)}]
        \item \label{C1} 
            $\ell_{F_0}=\max_{F\ne F_{-1}}\ell_F$,
        \item \label{C2} 
            there is no other maximal clique $F'$ with $\Ess(F')=\Ess(F_0)$, and
        \item \label{C3} 
            there is no maximal clique $F'$ and $f\in G_{F'}$ with the disjoint union $\Ess(F')\sqcup\Supp(f)=\Ess(F_0)$.
    \end{enumerate}
    Then, the projective dimension is given by 
    \[
        \projdim((\ini(P))^\vee)= \max_{F\ne F_{-1}}\ell_F.
    \]
\end{Lemma}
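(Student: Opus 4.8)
The plan is as follows. The upper bound $\projdim((\ini(P))^{\vee})\le\max_{F\ne F_{-1}}\ell_F$ has just been established, so it remains to produce the matching lower bound. Write $S\coloneqq\KK[\bdY]$, put $\ell\coloneqq\ell_{F_0}=\max_{F\ne F_{-1}}\ell_F$, and let $\mathbf e$ be the (squarefree) multidegree of $\widehat{\bdY^{\Ess(F_0)}}=m_{F_0}\widehat{\bdY^{F_0}}$. I will work with the iterated mapping cone free resolution $\bfF$ of $S/(\ini(P))^{\vee}$ obtained by adjoining the generators $\widehat{\bdY^F}$, over all maximal cliques $F$, in $\succ$-decreasing order starting from $F_{-1}$; at the step that introduces $\widehat{\bdY^F}$ one glues on the Koszul complex of the complete intersection colon ideal $I_F$ (of codimension $\ell_F$), shifted in multidegree by $\widehat{\bdY^F}$. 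Since $\bfF\otimes_S\KK$ computes $\operatorname{Tor}^S_{\bullet}(S/(\ini(P))^{\vee},\KK)$ regardless of minimality, the strategy is to isolate a multidegree at which $\bfF$ cannot possibly have a cancellation, and read off a nonzero Betti number there.

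First I would record the multigraded shape of $\bfF$. For $k\ge 2$ the module $\bfF_k$ has a basis indexed by the pairs $(F,T)$ with $F\ne F_{-1}$ a maximal clique, $\varnothing\ne T\subseteq G_F$ and $|T|=k-1$, the basis element for $(F,T)$ carrying multidegree $\widehat{\bdY^F}\prod_{f\in T}f=\widehat{\bdY^{F\setminus\Supp(T)}}$, where $\Supp(T)\coloneqq\bigcup_{f\in T}\Supp(f)\subseteq F$; moreover $\bfF_1$ has one basis element $\widehat{\bdY^F}$ per maximal clique, and $\bfF_0=S$. The observation that makes the bookkeeping possible is that the supports of the elements of $G_F$ are pairwise disjoint: the corner generators are pairwise distinct single variables, and whenever the tail generator actually belongs to $G_F$ it is divisible by none of those single variables (else it would fail to be a minimal generator). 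Consequently $\Supp(T)$ is a disjoint union, $\widehat{\bdY^F}\prod_{f\in T}f$ is squarefree, and $|F\setminus\Supp(T)|=N-\sum_{f\in T}|\Supp(f)|$; in particular the multidegree $\widehat{\bdY^{F\setminus\Supp(T)}}$ recovers $F\setminus\Supp(T)$ as a set.

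The heart of the proof is then a short calculation at the multidegree $\mathbf e$, showing that $\bfF$ has exactly one basis element of multidegree $\mathbf e$ at homological degree $\ell+1$ and none at homological degrees $\ell$ and $\ell+2$. Homological degree $\ell+2$ is empty altogether, since a pair $(F,T)$ there would require $|T|=\ell+1\le|G_F|=\ell_F\le\ell$ by \ref{C1}. At homological degree $\ell+1$, a pair of multidegree $\mathbf e$ forces $F\setminus\Supp(T)=\Ess(F_0)$, hence $|T|=\ell$; if $F=F_0$ this forces $T=G_{F_0}$ (the top Koszul generator of $I_{F_0}$, carrying multidegree $m_{F_0}\widehat{\bdY^{F_0}}=\mathbf e$), whereas if $F\ne F_0$ then $\ell=|T|\le\ell_F\le\ell$ forces $T=G_F$ and $\Ess(F)=\Ess(F_0)$, contradicting \ref{C2}. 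At homological degree $\ell$, a pair of multidegree $\mathbf e$ again forces $F\setminus\Supp(T)=\Ess(F_0)$, now with $|T|=\ell-1$; one first checks $F\ne F_0$ (with $F=F_0$ one would need $T\subsetneq G_{F_0}$, giving a set strictly larger than $\Ess(F_0)$), so $\ell-1\le\ell_F\le\ell$; if $\ell_F=\ell$ then $T=G_F\setminus\{f\}$ for a unique $f$, and the disjointness of supports gives $F\setminus\Supp(T)=\Ess(F)\sqcup\Supp(f)=\Ess(F_0)$, contradicting \ref{C3}; if $\ell_F=\ell-1$ then $T=G_F$ and $\Ess(F)=\Ess(F_0)$, contradicting \ref{C2}.

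The conclusion is then immediate: the complex $\bfF\otimes_S\KK$ is one-dimensional in homological degree $\ell+1$ in multidegree $\mathbf e$ and vanishes in multidegree $\mathbf e$ at homological degrees $\ell$ and $\ell+2$, so its homology at spot $\ell+1$ is $\KK$ and $\beta_{\ell+1,\mathbf e}(S/(\ini(P))^{\vee})=1\ne 0$. Hence $\projdim(S/(\ini(P))^{\vee})\ge\ell+1$, i.e.\ $\projdim((\ini(P))^{\vee})\ge\ell$, and combined with the upper bound this gives $\projdim((\ini(P))^{\vee})=\max_{F\ne F_{-1}}\ell_F$. I expect the main obstacle to be exactly the non-minimality of the iterated mapping cone: conditions \ref{C2} and \ref{C3} are precisely what rule out any cancellation at the distinguished multidegree $\mathbf e$, so the top Koszul generator contributed by $F_0$ genuinely survives into the minimal free resolution. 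Making the ``disjoint supports'' reduction and the attendant cardinality bookkeeping fully rigorous, uniformly for both the $lex$ type and $revlex$ type orderings, is where the care will be needed.
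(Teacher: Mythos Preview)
Your argument is correct and follows essentially the same route as the paper: build the iterated mapping cone resolution from the Koszul complexes of the complete-intersection colon ideals $I_F$, and isolate the multidegree $\widehat{\bdY^{\Ess(F_0)}}$ to see that the top Koszul generator for $F_0$ is uncancelled (\ref{C2} and \ref{C3} ruling out exactly the two possible sources of a module at homological degree $\ell_{F_0}$ in that multidegree, as in the paper). Your formalization via a direct $\operatorname{Tor}$ computation on the whole complex, together with the explicit disjoint-supports bookkeeping, is somewhat more rigorous than the paper's ``survives at this stage, hence survives forever'' phrasing, but the underlying idea is the same.
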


\begin{proof}
    The condition \ref{C1} is obviously necessary in view of \eqref{eqn:upper-bound}. Now, we go through the iterated mapping cone process.
    
    When doing the mapping cone at the stage of the maximal clique $F=F_0$, the top rank $1$ module is now at the homological degree $\ell_{F_0}+1$. It does not contribute to the projective dimension as expected, precisely when it actually cancels some free module of rank $1$ at the homological degree $\ell_{F_0}$, shifted at the same multidegree. By the maximality of $\ell_{F_0}$, the latter free module has two sources. The first source is the top free module of some Koszul complex added earlier by some maximal clique $F'$ with $\ell_{F'}=\ell_{F_0}-1$. Since we will have $\ell_{F_0}\ge 2$ in later discussions, we may assume that $F'\ne F_{-1}$. This leads to the introduction of the condition \ref{C2}. The second source is the free module from the second highest homological degree in earlier Koszul complexes. This leads to the introduction of the condition \ref{C3}.
    
    If the top rank $1$ module ``survives'' at this stage, it will ``survive forever''. This is because it stays at the homological degree $\ell_{F_0}+1$, while all existing free modules that are canceled out in later mapping cones, undoubtedly come from the homological degrees at most $\ell_{F_0}$.
\end{proof}
   
\begin{Remark}
    \label{rmk:F_0}
    \begin{enumerate}
        \item  Suppose that we have a maximal clique $F'$ and $f\in G_{F'}$ with the disjoint union $\Ess(F')\sqcup\Supp(f)=\Ess(F_0)$. Since the starting variable of $F_0$ belongs to $\Ess(F_0)$ and the starting variable of $F'$ belongs $\Ess(F')$, these two starting variables must coincide. Furthermore, $f$ cannot be this common variable.
        \item Similarly, if we have two maximal cliques $F'$ and $F_0$ with $\Ess(F')=\Ess(F_0)$, then the starting variable of $F'$ is identical to that of $F_0$.
    \end{enumerate}
\end{Remark}

\subsection{The $2r+d\le c$ case}
\label{subsection:c>=2r+d}
Starting from this subsection, we will pin down the concrete projective dimension of $(\ini(P))^\vee$. For that purpose, suppose that $N-1= pr+q$ such that $1\le q\le r$. 
To achieve the maximum in \eqref{eqn:max-corner}, the most natural strategy in mind is to assume that $\bdlambda_{F}$ is the concatenation of $p$ subsequences of the form $(1,2,\dots,r)$ first, and then followed by $1$ subsequence of the form $(1,2,\dots,q)$. 

We have seen earlier that the moving sequence $\bdlambda_F$ in \eqref{eqn:moving-seq} actually determines the maximal clique $F$. Using the above strategy, it is not difficult to see that  the first variable of $F$ has the form
\[
    \bdbeta_1=(1,1+(p+1),1+2(p+1),\dots,1+q(p+1),1+q(p+1)+p,\dots,1+q(p+1)+(r-q-1)p)
\]
by \eqref{eqn:mv-determine-beta1-1} and \eqref{eqn:mv-determine-beta1-2}.
Since we have the $<_d$ compatibility requirement, to make the moving sequence legal, we have the following considerations.
\begin{enumerate}[i]
    \item If $ r-q-1\ge 1$, i.e., if $q\le r-2$, then the compatibility requirements are reduced to the single inequality $1+q(p+1)+1<_d 1+q(p+1)+p$, which is equivalent to saying $c\ge 2r+d+2$.
    \item If $q=r-1$, then the compatibility requirements are reduced to the single inequality $1+1<_d 1+(p+1)$, which is equivalent to saying $c\ge 2r+d$.
    \item If $q=r$, then the compatibility requirements are reduced again to the single inequality $1+1<_d 1+(p+1)$, which is equivalent to saying $c\ge 2r+d+1$.
\end{enumerate}

Notice that if $c=2r+d$, then $N-1=(d+1)r+(r-1)$. Whence, $q=r-1$. And if $c=2r+d+1$, then $N-1=(d+1)r+r$. Whence, $q=r$. Now, it suffices to consider the following two cases.

\begin{Observations}
    \label{rmk:upper-bound}
    \begin{enumerate}[a]
        \item If $c\ge 2r+d$ and $q<r$, then the above argument shows that we can find suitable maximal clique $F_0$, whose moving sequence $\bdlambda_{F_0}$ has $p+1$ strictly increasing subsequences, and the final movement in the final subsequence is $q$. Since $1+(p+1)>2+d$ in these cases, the special $k$ in \eqref{sequence-gamma} is actually $1$, by \eqref{eqn:1-k-minimal}. Now, since $q<r$, we have a linear tail generator by the description in subsection \ref{ss:linear-quotients}. This generator surely cannot be canceled by a corner generator. Hence the maximum at the right-hand side of \eqref{eqn:upper-bound} is exactly 
        \[
        (N-1)-\ceil{(N-1)/r}+1=(N-1)-\floor{(N-1)/r}.
        \]

        \item If $c\ge 2r+d+1$ and $q=r$,  suppose that the moving sequence $\bdlambda_{F_0}$ of some suitable maximal clique $F_0$ has the least number of strictly increasing subsequences. Since $q=r$, this forces $\bdlambda_{F_0}$ to be exactly the concatenation of $p+1$ subsequences of the form $(1,2,\dots,r)$. Whence, the special $k$ can be seen to still be $1$, while the tail generator has degree $2$. This tail generator will be canceled by the final corner generator $Y_{\bdbeta_{N-1}}$. In other words, the minimal monomial generating set of $I_{F_0}$ consists solely of corner generators. Since $r$ divides $N-1$, the maximum at the right-hand side of \eqref{eqn:upper-bound} is then $(N-1)-(N-1)/r$.
    \end{enumerate}
\end{Observations}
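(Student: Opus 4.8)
The plan is to produce a single maximal clique $F_0\ne F_{-1}$ whose complete intersection colon ideal $I_{F_0}$ realizes the value claimed for $\max_{F\ne F_{-1}}\ell_F$, and then to deduce from the upper bound of subsection \ref{ss:max-length} (plus a short rigidity step in the divisible case) that nothing beats it. Write $N-1=pr+q$ with $1\le q\le r$, so that $\ceil{(N-1)/r}=p+1$, and observe that $2r+d\le c$ forces $N=c+(r-1)d\ge r(d+2)$, hence $p=\floor{(N-1)/r}\ge d+1$. First I would define $F_0$ through its moving sequence $\bdlambda_{F_0}$, namely the concatenation of $p$ copies of $(1,2,\dots,r)$ followed by one copy of $(1,2,\dots,q)$; by \Cref{rmk:moving-seq} this prescribes the starting variable $Y_{\bdbeta_1}$ and hence $F_0$ uniquely, and the only thing to verify is legality, i.e.\ that the resulting sequence of variables fulfils \Cref{MaxCli}. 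As already isolated in the discussion above, legality reduces to a single $<_d$-inequality, which, via $N-1=pr+q$, is implied by $2r+d\le c$ (and by $2r+d+1\le c$ in the boundary case $q=r$). Since $p\ge d+1$ gives $\bdbeta_1^2=1+(p+1)>2+d$, the starting variable is not the initial ring variable; this both guarantees $F_0\ne F_{-1}$ and, by \eqref{eqn:1-k-minimal}, forces the special index $k$ of \eqref{sequence-gamma} to equal $1$, so a tail generator is in play.

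Next I would read off $G(I_{F_0})$ from \Cref{case:tail} and \Cref{case:corner} using the $lex$ type order. Because $\bdlambda_{F_0}$ decomposes into exactly $p+1$ maximal strictly increasing blocks, its $p$ block boundaries are the only positions of $\{1,\dots,N-2\}$ lying outside $K_{F_0}$, so there are $|K_{F_0}|=(N-2)-p=N-1-\ceil{(N-1)/r}$ corner generators $\Set{Y_{\bdbeta_{k+1}}\mid k\in K_{F_0}}$, all linear with indices in $\{2,\dots,N-1\}$. For the tail generator, with $k=1$, I would compute $\delta_1$ from \eqref{eqn:delta}: $\delta_1-1$ is the largest position of a movement of value $\le r-1$. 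When $q\le r-1$ this is position $pr+q=N-1$, so $\delta_1=N$ and the tail generator is the single variable $Y_{\bdbeta_N}$, which is distinct from every corner generator; hence $\ell_{F_0}=|K_{F_0}|+1=(N-1)-\ceil{(N-1)/r}+1=(N-1)-\floor{(N-1)/r}$. As $\ell_F\le|K_F|+1\le(N-1)-\ceil{(N-1)/r}+1$ for every $F$ by \eqref{eqn:max-corner}, this is already the maximum, proving part (a).

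For part (b), where $q=r$ and $2r+d+1\le c$, the last two movements of $\bdlambda_{F_0}$ have values $r-1$ then $r$, so $\delta_1=N-1$ and the tail generator has degree two, equal to $Y_{\bdbeta_{N-1}}Y_{\bdbeta_N}$; but $j_{N-2}=r-1<r=j_{N-1}$ places $N-2$ in $K_{F_0}$, so the corner generator $Y_{\bdbeta_{N-1}}$ divides this tail generator, which is therefore redundant, and $\ell_{F_0}=|K_{F_0}|=(N-1)-(N-1)/r=(N-1)-\floor{(N-1)/r}$. To conclude I must show no clique $F$ has $\ell_F$ one larger. Since $\ell_F\le|K_F|+1$ and, by \Cref{lambdaIncrease}, $|K_F|$ is maximal only when $\bdlambda_F$ splits into exactly $p+1$ blocks, any such $F$ would need $|K_F|=N-1-(p+1)$ together with a surviving tail generator; but when $q=r$ the total length $(p+1)r$, the block count $p+1$, and the constraint ``each block has length $\le r$'' force every block of $\bdlambda_F$ to equal $(1,2,\dots,r)$, whence $\bdlambda_F=\bdlambda_{F_0}$ and $F=F_0$ by \Cref{rmk:moving-seq}, whose tail we just saw is absorbed. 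This gives $\max_{F\ne F_{-1}}\ell_F=(N-1)-\floor{(N-1)/r}$, as claimed.

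I expect the delicate part to be exactly the $q=r$ rigidity in part (b): one must rule out that \emph{any} maximal clique evades the drop produced by the absorbed tail generator, which is precisely what the block-length argument pinning $F$ to $F_0$ achieves. The twin subtlety is the exact determination of $\delta_1$, hence of the degree of the tail generator, since it is sensitive to where the last $(r-1)$- and $r$-movements of $\bdlambda_{F_0}$ fall. By contrast, checking legality of $\bdlambda_{F_0}$ and counting corner generators is routine once the $<_d$-subcases are separated and \Cref{lambdaIncrease} is invoked.
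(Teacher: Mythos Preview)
Your proposal is correct and follows essentially the same route as the paper: you build the identical candidate clique $F_0$ via the concatenated moving sequence, invoke the same case split (i)--(iii) for legality, and in part (b) use the same rigidity argument (block length $\le r$ plus total length $(p+1)r$ forces every block to be $(1,2,\dots,r)$). The main difference is that you make explicit what the paper leaves implicit---in particular the computation of $\delta_1$ from \eqref{eqn:delta} (identifying $\delta_1-1$ with the position of the last non-$r$ movement) and the step that rules out any other clique attaining $\ell_F=(N-1)-p$ when $q=r$---but these are elaborations rather than a different strategy.
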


Here is the main result of this subsection.

\begin{Proposition}
    \label{thm-main-result}
    If $2r+d \leq c$, then $\projdim((\ini(P))^\vee)=N-1-\floor{(N-1)/r}$.
\end{Proposition}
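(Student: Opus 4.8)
The proof combines the upper bound from Observations~\ref{rmk:upper-bound} with a matching lower bound produced via \Cref{F_0}.

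\textbf{Overall plan.} Observations~\ref{rmk:upper-bound} have already identified, for $2r+d\le c$, the value $\max_{F\ne F_{-1}}\ell_F=N-1-\floor{(N-1)/r}$; together with~\eqref{eqn:upper-bound} this gives $\projdim((\ini(P))^\vee)\le N-1-\floor{(N-1)/r}$. So the whole task is to show the bound is attained, and by \Cref{F_0} it is enough to exhibit one maximal clique $F_0$ with $\ell_{F_0}$ equal to the maximum and satisfying conditions~\ref{C2} and~\ref{C3}.

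\textbf{Step 1: the witness clique and its essential part.} Writing $N-1=pr+q$ with $1\le q\le r$, I would take $F_0$ to be the maximal clique constructed in Observations~\ref{rmk:upper-bound}: when $q<r$, let the moving sequence $\bdlambda_{F_0}$ be $p$ copies of $(1,2,\dots,r)$ followed by one copy of $(1,2,\dots,q)$; when $q=r$, let it be $p+1$ copies of $(1,2,\dots,r)$. Condition~\ref{C1} is then exactly the conclusion of Observations~\ref{rmk:upper-bound}. The first substantive point is to read off $\Ess(F_0)$: every minimal generator of the colon ideal $I_{F_0}$ is linear --- the tail generator is linear when $q<r$, and it is absorbed by the final corner when $q=r$, as recorded before the statement --- so $\Supp(m_{F_0})$ is exactly the set of corner (and tail) variables, and by Observation~\ref{case:corner} this means $\Ess(F_0)$ consists precisely of the variables $Y_{\bdbeta}$ sitting at the start of a maximal strictly increasing run of $\bdlambda_{F_0}$; that is, $\Ess(F_0)=\{Y_{\bdbeta_1},Y_{\bdbeta_{r+1}},\dots,Y_{\bdbeta_{pr+1}}\}$, with $Y_{\bdbeta_N}$ adjoined when $q=r$.

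\textbf{Step 2: rigidity and the verification of \ref{C2} and \ref{C3}.} Each maximal clique is a saturated monotone lattice path from its first variable to $Y_{\bdbeta_N}$, and by \Cref{rmk:F_0} any clique that could violate~\ref{C2} or~\ref{C3} shares its first variable with $F_0$; hence such an $F'$ passes through every ``checkpoint'' $Y_{\bdbeta_{ir+1}}$ and makes exactly $r$ (respectively $q$) coordinate increments between consecutive checkpoints. The key rigidity statement is that inside any non-final block, performing these increments in an order other than $(1,2,\dots,r)$ produces a clique vertex that is neither a corner nor part of a tail generator, hence a new element of the essential part. For~\ref{C2} this forces $\Ess(F')\supsetneq\Ess(F_0)$ unless $F'=F_0$. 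For~\ref{C3}, since $\Supp(f)\ne\varnothing$ the equality $\Ess(F')\sqcup\Supp(f)=\Ess(F_0)$ would force $\Ess(F')\subsetneq\Ess(F_0)$, so all non-final blocks of $F'$ are again ordered $(1,\dots,r)$; one then checks --- using that checkpoint positions are never corners and that a tail generator is supported on \emph{consecutive} clique positions, of which at most one can be a checkpoint --- that no admissible pair $(F',f)$ exists (the case $q=r$ needing a small extra argument about the degenerate tail).

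\textbf{The main obstacle.} Step 2 runs through cleanly as long as the coordinate gaps of $\bdbeta_1$ are not tight, which happens once $c$ is sufficiently larger than $2r+d$. The genuinely delicate case is the boundary, $c=2r+d$ (so $q=r-1$) and $c=2r+d+1$ (so $q=r$): here a gap of $\bdbeta_1$ is as small as $d+1$, so one can legally permute the increments inside the \emph{final} block of $\bdlambda_{F_0}$ and obtain a distinct maximal clique $F'$ with $\ell_{F'}=\ell_{F_0}-1$ but with $\Ess(F')=\Ess(F_0)$, so condition~\ref{C2} fails for the $lex$-type choice of $F_0$. The remedy is to rerun the analysis of this subsection with the $revlex$-type ordering from~\eqref{2-orderings}: there the witness clique has a moving sequence built from maximal strictly \emph{decreasing} runs $(r,r-1,\dots,1)$, the corner/tail behaviour is governed instead by the refined index set $K_F'$ of Observation~\ref{case:corner}, and the analogous rigidity does go through, so \Cref{F_0} applies and yields $\projdim((\ini(P))^\vee)=N-1-\floor{(N-1)/r}$ on the nose. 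Showing that for every $c\ge 2r+d$ at least one of the two orderings admits such a competitor-free witness clique --- and pinning this down in the tight boundary regime --- is the crux; the rest is the bookkeeping of Steps 1 and 2.
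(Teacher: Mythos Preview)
Your overall plan --- produce the witness clique $F_0$ from Observations~\ref{rmk:upper-bound} and verify conditions \ref{C1}--\ref{C3} of \Cref{F_0} --- is exactly the paper's approach, and your Steps~1 and~2 are on the right track. The error is in your ``main obstacle''. You claim that at the boundary $c=2r+d$ (so $q=r-1$) one can permute the final block of $\bdlambda_{F_0}$ to obtain a distinct clique $F'$ with $\Ess(F')=\Ess(F_0)$, so that \ref{C2} fails for the $lex$ ordering. This is false. If the final block $(1,2,\dots,q)$ is replaced by any nontrivial rearrangement, then at the first descent $j_k>j_{k+1}$ the variable $Y_{\bdalpha_{k+1}}$ is \emph{not} a corner generator; and since the last movement of the final block is still some element of $\{1,\dots,q\}\subseteq\{1,\dots,r-1\}$, one has $\delta_1=N$ for $F'$ and the tail is linear, equal to $Y_{\bdalpha_N}$. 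Hence $Y_{\bdalpha_{k+1}}\in\Ess(F')\setminus\Ess(F_0)$. For a concrete instance take $(r,c,d)=(3,7,1)$: replacing the final block $(1,2)$ by $(2,1)$ gives $\Ess(F')=\{Y_{1,4,7},Y_{2,5,8},Y_{3,6,9},Y_{3,7,9}\}\supsetneq\Ess(F_0)$.

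This is precisely why your own rigidity principle from Step~2 (``performing these increments in an order other than $(1,2,\dots,r)$ produces a new element of the essential part'') applies to the final block as well; you abandoned it too soon. The paper runs the entire argument for $c\ge 2r+d$ with the $lex$ ordering and never switches: starting from $\bdalpha_1=\bdbeta_1$ it forces the first $p$ blocks to coincide, then observes that the tail for any $F'$ with the same start is linear (since the special $k$ equals $1$), so the hypothesis $\Ess(F')=\Ess(F_0)$ makes every intermediate variable of the last block a corner, forcing that block to be strictly increasing, hence equal to $(1,\dots,q)$ and $F'=F_0$. The $revlex$ ordering is reserved for the genuinely different range $r+d<c<2r+d$ treated in \Cref{prop:sharp-bound-inter}; it is not needed here.
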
  

\begin{proof}
    Let $F_0$ be the maximal clique given in \Cref{rmk:upper-bound}. We will verify that $F_0$ satisfies the conditions \ref{C2} and \ref{C3} in \Cref{F_0}, where \ref{C1} holds already.

    Suppose that $F'$ is also a maximal clique with $\Ess(F_0)=\Ess(F')$. Without loss of generality, we may assume that $F_0=\Set{Y_{\bdbeta_1}>\cdots>Y_{\bdbeta_N}}$ while $F'=\Set{Y_{\bdalpha_1}>\cdots>Y_{\bdalpha_N}}$.
    \begin{enumerate}[i]
        \item 
        
        We have seen in \Cref{rmk:F_0} that $\bdbeta_1=\bdalpha_1$.
        \item \label{2r+d:ii}
            Notice that $Y_{\bdbeta_{1+r}},Y_{\bdbeta_{1+2r}},\dots,Y_{\bdbeta_{1+pr}}$ are the next $p$ elements in order in $\Ess(F_0)$. Therefore, $Y_{\bdalpha_{1+s_1r+s_2}}$ are all corner generators of $I_{F'}$ for $0\le s_1\le p-1$ and $1\le s_2 \le r-1$. But in any legitimate moving sequence, any strictly increasing subsequence has length at most $r$. Hence, the movement from $Y_{\bdalpha_1}$ to $Y_{\bdalpha_{1+pr}}$ in $F'$ consists precisely of the concatenation of $p$ subsequences of the form $(1,2,\dots,r)$, just as in $F$. Thus, as $\bdbeta_1=\bdalpha_1$, it forces $\bdalpha_j=\bdbeta_j$ for $j=1,2,\dots,1+pr$.
        \item For the final moving subsequence, we have two cases. Note that since $\bdalpha_1=\bdbeta_1$, the special $k$ in \eqref{sequence-gamma} for $F'$ is actually $1$ as well, by \eqref{eqn:1-k-minimal}. 
            \begin{enumerate}[a]
                \item Suppose that $q<r$. Whence, by the discussion in \Cref{rmk:upper-bound}, we have no further variable for $\Ess(F_0)$.  
               
                Since $\bdalpha_N=\bdbeta_N$, the remaining moving subsequence of $F'$ is just a rearrangement of $(1,2,3,\dots,q)$ while $q<r$. Whence, the tail generator corresponding to $F'$ is linear as well. In turn, $Y_{\bdalpha_{1+pr+1}},\dots,Y_{\bdalpha_{N-1}}$ are all corner generators for $F'$, since $\Ess(F_0)=\Ess(F')$. Now, the moving subsequence corresponding to the movement from $Y_{\bdalpha_{1+pr}}$ to $Y_{\bdalpha_{N}}$ has to be a strictly increasing one, namely $(1,2,\dots,q)$. In short, $\bdlambda_{F_0}=\bdlambda_{F'}$. Consequently, we have $F_0=F'$, as expected.
                
                \item Suppose that $q=r$. Whence, by the discussion in \Cref{rmk:upper-bound}, $Y_{\bdbeta_N}$ is the unique additional variable for $\Ess(F_0)$. Since $\Ess(F_0)=\Ess(F')$ while the final variable of $F'$ can never be a corner generator, this final variable will belong to a tail generator of degree at least $2$. On the other hand, since the remaining moving subsequence contains exactly one copy of $r$, this tail generator has degree at most $2$. This implies that $Y_{\bdalpha_{1+pr+1}},\dots,Y_{\bdalpha_{N-1}}$ are all corner generators regarding $F'$, and $Y_ {\bdalpha_{N-1}}$ will cancel the tail generator of degree $2$. As in the previous case, we will consequently have $F_0=F'$, as expected.
            \end{enumerate}
    \end{enumerate}

    So far, the conditions \ref{C1} and \ref{C2} have been verified. We continue to verify the condition \ref{C3} in these cases. Assume to the contrary that there is some maximal clique $F'$ and some generator $f\in G_{F'}$ with the disjoint union $\Ess(F')\sqcup\Supp(f)=\Ess(F_0)$. By \Cref{rmk:F_0}, $F_0$ and $F'$ have the same starting variable, which cannot be $f$.
    \begin{enumerate}[a]
        \item When $c\ge 2r+d$ and $q<r$, we have seen that
            \[
                \Ess(F_0)=\Set{Y_{\bdbeta_1}, Y_{\bdbeta_{1+r}},Y_{\bdbeta_{1+2r}},\dots,Y_{\bdbeta_{1+pr}}}.
            \]
            Since $\Supp(f)\subset \Ess(F_0)$ and $f\in G_{F'}$, this $f$ has to be a corner generator in $G_{F'}$ and is linear.

            If $f=Y_{\bdbeta_{1+p'r}}$ with $1\le p'<p$, then in the moving sequence $\bdlambda_{F'}$, there will be a strictly increasing subsequence of length $2r$ moving from $Y_{\bdbeta_{1+(p'-1)r}}$ to $Y_{\bdbeta_{1+(p'+1)r}}$. But the length of a strictly increasing subsequence is at most $r$, and we have a contradiction. 
            
            If $f=Y_{\bdbeta_{1+pr}}$, then either we have a strictly increasing subsequence of length $>r$, or this $f$ belongs to the tail generator. The first case won't happen by the same reason. 
            As for the latter one, notice that the special $k$ in \eqref{sequence-gamma} has to be $1$.
            This implies that starting from $f$ all the movements in $F'$ have to be $r$. 
            But this is impossible, since $\bdbeta_1=\bdalpha_1$ and we have the requirements \eqref{eqn:mv-determine-beta1-1} and \eqref{eqn:mv-determine-beta1-2}.
            
        \item When $c\ge 2r+d+1$ and $q=r$, like above, we have
            \[
                \Ess(F_0)=\Set{Y_{\bdbeta_1}, Y_{\bdbeta_{1+r}},Y_{\bdbeta_{1+2r}},\dots,Y_{\bdbeta_{1+pr}},Y_{\bdbeta_{N=1+(p+1)r}}}.
            \]
            Similarly, we only have to worry about the case when $f$ is the last one, which in this case is $Y_{\bdbeta_N}=Y_{\bdalpha_N}$. Consider the strictly increasing subsequence that moves starting from $Y_{\bdalpha_{1+pr}}=Y_{\bdbeta_{1+pr}}$ regarding $F'$. We claim that the end variable of this round of movements will be the starting variable of the tail generator of $I_{F'}$.
           
            To see it, notice that if the tail generator has a higher degree, it will get canceled out by some corner generator in $G_{F'}$. This will make $Y_{\bdalpha_N}$ appearing in $\Ess(F')$, a contradiction.  On the other hand, if the tail generator has a lower degree, the end variable of this round will appear in $\Ess(F')$, another contradiction.
            
            Notice that the remaining moving subsequence starting from $Y_{\bdalpha_{1+pr}}$ is just a rearrangement of $(1,2,\dots,r)$ by the requirements \eqref{eqn:mv-determine-beta1-1} and \eqref{eqn:mv-determine-beta1-2}. 
            If the tail generator is linear, this generator is precisely $f$. Whence  $\bdlambda_{F'}=\bdlambda_{F_0}$, and consequently $F'=F_0$, a contradiction. 
            On the other hand, when the tail generator is not linear, since $\bdbeta_1=\bdalpha_1$ and we have the special $k$ in \eqref{sequence-gamma} for $F'$ to be $1$, this tail generator has degree $2$ and the final movement is $r$. Therefore, the strictly increasing moving subsequence starting from $Y_{\bdalpha_{1+pr}}$ will be $(1,2,\dots,r-1)$. We again obtain $\bdlambda_{F'}=\bdlambda_{F_0}$, which is a contradiction. And this completes the proof. \qedhere
    \end{enumerate}
\end{proof}

\subsection{The $r+d< c<2r+d$ case}
\label{sec:sharp-bound-inter}

The main result of this subsection is to show the following formula.

\begin{Proposition}
    \label{prop:sharp-bound-inter}
    Suppose that $r+d< c< 2r+d$. Then the projective dimension is given by
    \begin{equation}
        \projdim((\ini(P))^\vee)=dr-2r-3d+2c-2.
        \label{eqn:sharp-bound-inter}
    \end{equation}
\end{Proposition}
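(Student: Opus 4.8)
The plan is to sandwich $\projdim((\ini(P))^\vee)$ between matching bounds. Throughout, set $E\coloneqq c-(r+d+1)$, so that $0\le E\le r-2$ in the range $r+d<c<2r+d$; then $N-1=E+r(d+1)$ and $dr-2r-3d+2c-2=d(r-1)+2E$, so it suffices to prove $\projdim((\ini(P))^\vee)=d(r-1)+2E$.

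\emph{Upper bound.} By \eqref{eqn:upper-bound} we must show $\ell_F\le d(r-1)+2E$ for every maximal clique $F\ne F_{-1}$. Write $\ell_F=|K_F|+\epsilon_F$ with $\epsilon_F\in\{0,1\}$ according to whether the tail generator of $I_F$ survives cancellation; since $|K_F|=(N-1)-s_F$, where $s_F$ is the number of maximal strictly monotone runs of the moving sequence $\bdlambda_F$ (see \Cref{lambdaIncrease}), the point is to bound $s_F$ below by controlling the lengths of those runs. The controlling constraint is $<_d$-compatibility: to increment the $j$-th coordinate of a $<_d$-chain with $j<r$ one needs the gap between the $j$-th and $(j+1)$-st coordinates to be at least $d+2$ at that moment, whereas the total ``slack'' of the $r$ relevant gaps above their minimum value $d+1$ equals $E$ at $\bdbeta_1$, can only shrink under $1$-movements, and merely migrates rightward under the inner and the $r$-movements (by \eqref{condition:clique-end}, \eqref{eqn:mv-determine-beta1-1}, \eqref{eqn:mv-determine-beta1-2}). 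Consequently a monotone run of length $r$, which would consume slack at all $r-1$ inner gaps while leaving an $r$-movement still to be performed, is available at all only when $E+d\ge r-1$, and in general the available run lengths, hence $s_F$, are pinned down by how the $E$ units of slack may be apportioned. Carrying out this bookkeeping — separately under the $lex$ and the $revlex$ type of ordering in \eqref{2-orderings}, and retaining the smaller of the two resulting estimates — gives $\max_{F\ne F_{-1}}\ell_F\le d(r-1)+2E$; the case distinction $r+d<c<2r$ versus $2r\le c<2r+d$ surfaces naturally at this stage. Using both orderings is essential: the $lex$ estimate alone can strictly exceed the true projective dimension because the iterated mapping cone fails to be minimal at the top homological degree — for instance $(r,c,d)=(3,6,1)$ gives $\max_F\ell_F=5$ in the $lex$ ordering but $\projdim((\ini(P))^\vee)=4$ — whereas in the $revlex$ ordering the last movement of each maximal strictly decreasing run, together with those movements excluded from $K_F'$ in \Cref{case:corner}, contribute no corner and the estimate tightens.

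\emph{Lower bound.} We produce an explicit optimal clique $F_0$ and apply \Cref{F_0}. Take $\bdlambda_{F_0}$ to be the moving sequence that spends the $E$ units of slack so as to make $s_{F_0}$ as small as the previous paragraph permits, interleaving $r$-movements and inner movements in the pattern forced by slack migration (the case $E=0$ being the most rigid, with every inner movement ``propagating in from the right''). As in \Cref{rmk:upper-bound} the special index $k$ of \eqref{sequence-gamma} equals $1$, one reads off $\ell_{F_0}=d(r-1)+2E$, and \ref{C1} holds by the upper bound. For \ref{C2} and \ref{C3} we argue as in the proof of \Cref{thm-main-result}: by \Cref{rmk:F_0} any competitor $F'$ with $\Ess(F')=\Ess(F_0)$, or with $\Ess(F')\sqcup\Supp(f)=\Ess(F_0)$ for some $f\in G_{F'}$, starts with the same ring variable as $F_0$; the intermediate elements of $\Ess(F_0)$ then force the block structure of $\bdlambda_{F'}$ — again because no monotone run can be longer than the slack allows — and tracking the degree of the tail generator yields $\bdlambda_{F'}=\bdlambda_{F_0}$, hence $F'=F_0$, a contradiction. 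The subcase in which $f$ is a corner generator or lies in the tail is ruled out by the same length restriction on runs, just as in the analogous subcases of the proof of \Cref{thm-main-result}.

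\emph{Main obstacle.} The difficulty is the sharp upper bound, i.e.\ the combinatorial optimization of $|K_F|+\epsilon_F$ in this degenerate regime. In contrast with \Cref{thm-main-result}, where the ``full runs $(1,2,\dots,r)$ followed by a partial run'' moving sequence is legal and instantly optimal, here $E<r-1$ forbids long runs, the slack shifts dynamically as movements are performed, and — worst of all — the $lex$ bound need not be tight. Pinning down $d(r-1)+2E$ therefore requires simultaneously (i) finding the slack apportionment that minimizes the number of runs subject to migration, and (ii) choosing between the $lex$ and the $revlex$ orderings the one under which the optimal clique $F_0$ additionally satisfies the cancellation-avoidance conditions \ref{C2}--\ref{C3} that keep the mapping cone minimal at homological degree $\ell_{F_0}+1$. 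This is exactly the ``balance'' between the two orderings referred to in the introduction.
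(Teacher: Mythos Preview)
Your proposal is not a proof; both the upper and the lower bound are asserted rather than established, and in at least one place the assertion is wrong.

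\textbf{Upper bound.} Saying that ``carrying out this bookkeeping'' with the slack heuristic yields $\max_F\ell_F\le d(r-1)+2E$ is not an argument. The naive run-length bound $|K_F|\le (N-1)-\lceil (N-1)/r\rceil$ exceeds $d(r-1)+2E$ by as much as $r-1$ in this range, so something substantive must close the gap, and you never say what. The slack idea is a reasonable heuristic, but turning it into a sharp inequality under the $revlex$ ordering --- where corners are governed by the more delicate set $K_F'$ of \Cref{case:corner}, not just by run decomposition --- is precisely the hard part. The paper avoids a direct optimization over all $F$ entirely: it first settles the two boundary cases $c=r+d+1$ (Step~\ref{step-1}, via the $lex$ ordering, where linear quotients make $\projdim=\bdl^{lex}$ immediate) and $c=2r+d-1$ (Step~\ref{step-2}, via an explicit $revlex$ clique and \Cref{F_0}), and then proves an inductive inequality $\bdl_{r,c,d}^{revlex}+d\le \bdl_{r+1,c+1,d}^{revlex}$ (Step~\ref{step-3}) by explicitly lifting an optimal clique from the $(r,c,d)$ problem to the $(r+1,c+1,d)$ problem and tracking whether the tail generator survives the lift. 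Because the target formula also jumps by exactly $d$ under $(r,c)\mapsto(r+1,c+1)$ and matches $\bdl^{revlex}$ at both endpoints, the chain of inequalities is squeezed to equalities. None of this inductive mechanism appears in your outline.

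\textbf{Lower bound.} ``Take $\bdlambda_{F_0}$ to be the moving sequence that spends the $E$ units of slack optimally'' is not a construction; until $F_0$ is written down, \ref{C2}--\ref{C3} cannot be checked, and the analogy with \Cref{thm-main-result} does not transfer automatically (there the runs are full $(1,2,\dots,r)$ blocks, here they cannot be). Your claim that the special index $k$ of \eqref{sequence-gamma} equals $1$ is also incorrect for the paper's $F_0$: in Step~\ref{step-2} the final movement is a $1$-movement, so $\delta_1=N$ and \eqref{eqn:delta-k} forces $k\ge2$; the tail generator is not linear but is cancelled by a corner. The paper's $F_0$ is the explicit clique whose moving sequence is drawn in \Cref{fig:intermediate-c}, obtained by iterating the Step~\ref{step-3} lift from the Step~\ref{step-2} base, and \ref{C2}--\ref{C3} are verified against that concrete object.
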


Though this is just an intermediate result with respect to our \Cref{thm:main-reg-balanced}, its proof is still quite involved. 
We will denote the upper bound at the right-hand side of \eqref{eqn:upper-bound} by $\bdl_{r,c,d}^{lex}$ or $\bdl_{r,c,d}^{revlex}$, depending on whether we take the $lex$ type or $revlex$ type ordering of variables.  
And here is the strategy for proving \eqref{eqn:sharp-bound-inter}. 
\begin{enumerate}[I]
    \item \label{step-1}
        We first show that the equality \eqref{eqn:sharp-bound-inter} holds when $c=r+d+1$ via the $lex$ type ordering. Whence, the right-hand side of \eqref{eqn:sharp-bound-inter} is given by $\bdl_{r,r+d+1,d}^{lex}$, which surely satisfies $\bdl_{r,r+d+1,d}^{lex}\le \bdl_{r,r+d+1,d}^{revlex}$ by \eqref{eqn:upper-bound}.
    \item \label{step-2} 
        Next, we show that \eqref{eqn:sharp-bound-inter} holds when $c=2r+d-1$ via the $revlex$ type ordering. Whence, the right-hand side of \eqref{eqn:sharp-bound-inter} is given by $\bdl_{r,2r+d-1,d}^{revlex}$.
    \item \label{step-3}
        As for $c$ with $r+d+1\le c<2r+d-1$, we show that $\bdl_{r,c,d}^{revlex}+d\le \bdl_{r+1,c+1,d}^{revlex}$. Due to the format of the right-hand side of \eqref{eqn:sharp-bound-inter} and the established equalities in the previous two cases, we actually have equality here. Whence, the right-hand side of \eqref{eqn:sharp-bound-inter} is precisely $\bdl_{r,c,d}^{revlex}$ whenever $r+d<c<2r+d$.
    \item \label{step-4}
        In the last step, we construct a facet $F_0$ giving the desired upper bound $\bdl_{r,c,d}^{revlex}$, and show as in the previous section that the projective dimension in mind is exactly this bound.
\end{enumerate}

\subsubsection{\textbf{\textrm{Step}} \ref{step-1}}
In the extremal case when $c=r+d+1$, applying the condition in \eqref{condition:clique-end} with the $<_d$ requirement,
it is easy to check that all maximal cliques have the common starting variable
\[
    Y_{\bdbeta_1}=Y_{1,2+d,3+2d,...,r+(r-1)d},
\]
which is the first ring variable.
This implies that when calculating the colon ideal $I_F$, no tail generator will ever appear. Whence, the ideal $(\ini(P))^\vee$ has linear quotients, and the projective dimension of this ideal is precisely the maximal number of the corners by \cite[Corollary 8.2.2]{MR2724673}.

Now, we will apply the $lex$ type ordering. The computation of this subcase is based on the following key observation.

\begin{Remark}
    \label{rmk:lex-ordering}
    We have seen earlier in \Cref{rmk:moving-seq} that any maximal clique $F$ is determined by its moving sequence $\bdlambda_F$. We can break this sequence into maximally strictly increasing subsequences $\bdlambda_{1},\dots,\bdlambda_s$, just as what we did in \Cref{lambdaIncrease}.

    Suppose that when we are processing some subsequence $\bdlambda_t$ with $1\le t\le s$, we can move some variable at the position $p$ with $1\le p\le r$. But instead, in $F$, we move to other positions. This implies that this $p$ does not belong to $\bdlambda_t$, but rather, belongs to some $\bdlambda_{t'}$ with $t<t'$. Let the $t'$ be minimal with respect to this property. Then $p$ does not appear in any of $\bdlambda_{t},\dots,\bdlambda_{t'-1}$. One can check that if we move the $p$ to any of the $\bdlambda_t,\dots, \bdlambda_{t'-1}$, we still get an allowable moving sequence, i.e., the corresponding maximal clique is legitimate. Notice that after this change, the number of strictly increasing subsequences does not increase. And it decreases precisely when $\bdlambda_{t'}$ consists solely of $p$.

    The consequence of this observation is that among all maximal cliques with the same starting variable, a clique with the minimal number of strictly increasing subsequences can be obtained by applying the following strategy. Namely, after the previous round of movement, we scan the movement choices from $1$ to $r$ in order. For each choice $p$, if the corresponding movement is legitimate (it should satisfy the $<_d$ condition and the resulting variable should precede the final variable $Y_{\bdbeta_N}$), we adopt this choice into the moving sequence and consider the next choice $p+1$. 
    If the movement $p$ is not allowed, then we will consider the next choice $p+1$ directly.  After we have considered the last possible movement $r$, we call it a round and rewind to consider a new round. If the current $\bdbeta_t$ reaches the final one, we stop. 
\end{Remark}

Now, for the extremal case $c=r+d+1$, we only need to check with the unique  maximal clique with the aforementioned moving strategy. Regarding this clique, we can imagine the moving subsequence of the $i$-th round $\bdlambda_i$ as a \emph{virtual $r$-tuple} $\bdlambda_i=(\bdlambda_i^1,\dots,\bdlambda_i^r)$. It is not difficult to check that for each $j$ with $1\le j\le r$, we have
\[
    \bdlambda_i^j=
    \begin{cases}
        j,& \text{if $1+r\le i+j\le d+1+r$}, \\
        \text{void movement},&\text{otherwise}.
    \end{cases}
\]
In other words, the moving sequence is the concatenation of the rows of the $(d+r)\times r$ diagram in \Cref{fig:c=r+d+1}, starting from the top while void movements are left blank.
\begin{figure}[htb]
    \includegraphics[width=7.5cm]{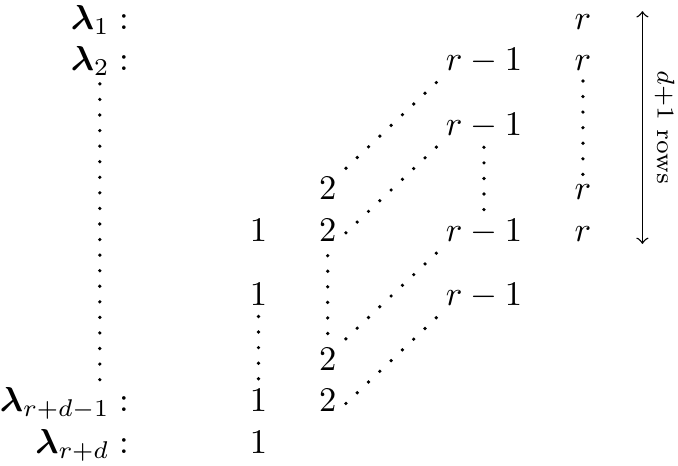} 
    \caption{Moving sequence in the case of $c=r+d+1$} 
    \label{fig:c=r+d+1}
\end{figure} 
    As a quick example, consider the case when $(r,c,d)=(3,5,1)$. The starting variable is $Y_{1,3,5}$ and consequently, the final variable has to be $Y_{3,5,7}$. The above moving strategy leads to the maximal clique
    \[
        \Set{Y_{1,3,5}\xrightarrow{3}
        Y_{1,3,6}\xrightarrow{2}
        Y_{1,4,6}\xrightarrow{3}
        Y_{1,4,7}\xrightarrow{1}
        Y_{2,4,7}\xrightarrow{2}
        Y_{2,5,7}\xrightarrow{1}
        Y_{3,5,7}}.
    \] 

Since we have $r+d$ rounds of movements and only the end movement at each round fails to contribute to any corner generator, the corresponding colon ideal contains exactly $r(d+1)-(r+d)=rd-d$ corner generators. In other words,
\[
    \pd((\ini(P))^\vee)=d(r-1),
\]
agreeing with the formula in \eqref{eqn:sharp-bound-inter}.

\subsubsection{\textbf{\textrm{Step}} \ref{step-2}}
In the extremal case when $c=2r+d-1$, we apply the $revlex$ type ordering of variables. Notice that
\[
    N-1=c+(r-1)d-1=r(2+d)-2.
\]
Furthermore, when $r=2$, $c=2r+d-1=r+d+1$, which has been covered by Step \ref{step-1}. Thus, we will assume that $r\ge 3$ in the following. It is clear that for each maximal clique $F$, the number of corners is bound from the above by 
\begin{equation}
    (N-1)-\ceil{(N-1)/r}=(r-1)(2+d)-2,
    \label{eqn:upper-bound-2r+d-1}
\end{equation}
which is precisely the right-side of \eqref{eqn:sharp-bound-inter} in this case.
This upper bound can be achieved by the following special maximal clique $F_0=\{Y_{\bdgamma_1}>\cdots >Y_{\bdgamma_N}\}$ with  the moving sequence as a concatenation in order of 
\begin{itemize}
    \item $1$ sequence of the form $(r,r-1,\dots,3,2)$, 
    \item $d$ sequences of the form $(r,r-1,\dots,2,1)$, and
    \item $1$ sequence of the form $(r-1,r-2,\dots,2,1)$. 
\end{itemize}
One can check that such a maximal clique exists and gives the desired number of corners via the description in \Cref{case:corner}. Furthermore,
\[
    \bdgamma_1=(1,2+d,2(2+d),\dots,(r-1)(2+d))
\]
and
\begin{equation}
    \bdgamma_N=(2+d,2(2+d),\dots,(r-1)(2+d),r(2+d)-1)
    \label{gamma-N}
\end{equation}
by \eqref{condition:clique-end}, \eqref{eqn:mv-determine-beta1-1}, and \eqref{eqn:mv-determine-beta1-2}.
As a quick example, when $(r,c,d)=(3,6,1)$, the maximal clique $F_0$ in mind will be
\[
    \Set{
        Y_{1,3,6}\xrightarrow{3}
        Y_{1,3,7}\xrightarrow{2}
        Y_{1,4,7}\xrightarrow{3}
        Y_{1,4,8}\xrightarrow{2}
        Y_{1,5,8}\xrightarrow{1}
        Y_{2,5,8}\xrightarrow{2}
        Y_{2,6,8}\xrightarrow{1}
        Y_{3,6,8}
    }.
\]

Regarding this clique, we claim that the tail generator of $F_0$ will be canceled out by its corners. Hence the minimal monomial generating set $G_{F_0}$ contains only corners.   
To see this, we notice that the last movement is $1<r$. Therefore $\delta_1=N$ in \eqref{eqn:delta}. Now, by \eqref{eqn:delta-k} and \eqref{gamma-N}, the special $k$ in \eqref{sequence-gamma} is at least $2$. Notice that $\delta_2=N-1$, which means that the tail generator contains the corner generator $Y_{\bdgamma_{N-1}}$.  
As a consequence of this claim, the minimal generating set $G(I_{F_0})$ consists of only corner generators. Whence,
\begin{equation}
    \Ess(F_0)=\{
    Y_{\bdgamma_1},Y_{\bdgamma_{r}},Y_{\bdgamma_{2r}},\dots,Y_{\bdgamma_{(d+1)r}},Y_{\bdgamma_{N}}
    \}.
    \label{eqn:Ess-F_0}
\end{equation}

It remains to show that the integer in \eqref{eqn:upper-bound-2r+d-1} gives the desired projective dimension, i.e., to verify the conditions \ref{C1}-\ref{C3} in \Cref{F_0}. The \ref{C1} part is automatic.


As for the condition \ref{C2}, 
suppose that $F=\Set{Y_{\bdbeta_1}>\cdots>Y_{\bdbeta_N}}$ is a maximal clique with $\Ess(F)=\Ess(F_0)$.

We have seen in \Cref{rmk:F_0} that $Y_{\bdbeta_1}=Y_{\bdgamma_1}$. 
Consequently, $Y_{\bdbeta_N}=Y_{\bdgamma_N}\in \Ess(F_0)=\Ess(F)$. This implies that $G(I_F)$ consists of only corner generators.
Whence, the moving subsequences connecting each adjacent pair in $\Ess(F)$ regarding $F$ are all strictly decreasing. Notice that each such strictly decreasing moving subsequence is completely determined by its two terminal variables in $F$.
Since $\Ess(F)=\Ess(F_0)$, the moving sequence of $F$ agrees with that of $F_0$. This is equivalent to saying that $F=F_0$, verifying the condition \ref{C2}. 

It remains to verify \ref{C3}. Indeed, since other subcases are similar as in  \Cref{subsection:c>=2r+d}, we only need to consider the case when the maximal clique $F'=\{Y_{\bdalpha_1}>\cdots >Y_{\bdalpha_N}\}$ satisfies $\bdalpha_1=\bdgamma_1$ (consequently $\bdalpha_N=\bdgamma_N$) and $\Ess(F')=\Ess(F_0)\setminus \{Y_{\bdalpha_N}\}$. 
Now, the final $r-1$ movement of $F'$ is just a rearrangement of $(r-1,r-2,\dots,2,1)$, not containing $r$. 
Furthermore, since $\bdgamma_1^3=2(2+d)$,
it follows from \eqref{eqn:1-k-minimal} that the special $k$ in \eqref{sequence-gamma} for $F'$ is at most $2$. 
Notice that $\delta_2\ge N-1$ for $F'$. In turn, the degree of the tail generator for $F'$ is at most $N-(N-1)+1=2$, and it equals $2$ exactly when $\delta_2=N-1$ for $F'$, or equivalently, the last movement is $1$. 
\begin{enumerate}[i]
    \item When the last movement is $1$, the tail generator is quadratic and all the remaining movements (a rearrangement of $(r-1,r-2,\dots,3,2)$) contribute to corners by expression of $\Ess(F')$. Whence, the corresponding moving subsequence is strictly decreasing and has to be exactly $(r-1,r-2,\dots,3,2)$. This means that $\bdlambda_{F'}=\bdlambda_{F_0}$, and in turn $F'=F_0$, a contradiction. 
    \item When the last movement is not $1$, the tail generator is linear. However, if the last movement of position $1$ in the moving sequence is the movement from $Y_{\bdalpha_{j}}$ to $Y_{\bdalpha_{j+1}}$ with $N-r+1\le j<N-1$, it is the final movement of some maximally strictly decreasing subsequence. Whence, $Y_{\bdalpha_{j+1}}\in \Ess(F')\setminus \Ess(F)$, another contradiction. And this completes our argument for the case when $c=2r+d-1$.
\end{enumerate}

\subsubsection{\textbf{\textrm{Step}} \ref{step-3}}
In this subsection, we focus on the case when $r+d+1\le c<2r+d-1$ and show that $\bdl_{r,c,d}^{revlex}+d\le \bdl_{r+1,c+1,d}^{revlex}$. Whence, $r\ge 3$. 
Obviously, we will apply the $revlex$ type ordering of variables in the following.
Let $F$ be a maximal clique in the case $(r,c,d)$ such that $\ell_F=\bdl_{r,c,d}^{revlex}$. Let $\bdlambda_F$ be the corresponding moving sequence and break it into maximally strictly decreasing subsequences $\bdlambda_{1},\bdlambda_{2},\dots,\bdlambda_s$. Since the length of each subsequence is at most $r$, it is clear that  
\[
    s\ge \ceil{\frac{N-1}{r}}\ge \ceil{\frac{r+d+1+(r-1)d-1}{r}}=d+1
\] 
for $N=N(r,c,d)=c+(r-1)d$.
Suppose that $F=\Set{Y_{\bdbeta_1}>\cdots>Y_{\bdbeta_N}}$. We will construct a related maximal clique $\widetilde{F}=\Set{\widetilde{Y}_{\bdalpha_1}>\cdots>\widetilde{Y}_{\bdalpha_{\widetilde{N}}}}$ in the case $(r+1,c+1,d)$ as follows. 
As a reminder, the ring for $\widetilde{F}$ will be $\KK[\widetilde{\bdY}]=\KK[\widetilde{Y}_{\bdalpha}\,|\,\bdalpha\in \Lambda_{r+1,d}(\widetilde{N})]$ where $\widetilde{N}=N(r+1,c+1,d)=c+1+(r+1-1)d=N+d+1$.
The $(r+1)$-tuple ${\bdalpha_1}$ will be obtained by appending $\bdbeta_1$ with $N=c+(r-1)d$. The facet $\widetilde{F}$ is then determined by the moving sequence $(\widetilde{\bdlambda_0},\widetilde{\bdlambda_{1}},\dots,\widetilde{\bdlambda_s})$, where
\begin{itemize}
    \item $\widetilde{\bdlambda_0}=r+1$ is a sequence containing just one element,
    \item $\widetilde{\bdlambda_i}$ is obtained by prepending $\bdlambda_i$ by $r+1$ for $1\le i\le d$, and
    \item $\widetilde{\bdlambda_i}=\bdlambda_i$ for $d+1\le i\le s$.
\end{itemize}
As a quick example, consider the case $(r,c,d)=(3,5,1)$. Then  
\[
F=\Set{
Y_{1,3,5}>Y_{1,3,6}>Y_{1,3,7}>Y_{1,4,7}>Y_{1,5,7}>Y_{2,5,7}>Y_{3,5,7}
}
\]
with the moving sequence $\bdlambda_F=(\bdlambda_1,\bdlambda_2,\bdlambda_3,\bdlambda_4)=(\underline{3}, \underline{3, 2}, \underline{2, 1}, \underline{1})$ is a maximal clique with $\ell_F=\bdl_{3,5,1}^{revlex}$. The newly constructed maximal clique will be
\[
\widetilde{F}=\Set{
\widetilde{Y}_{1,3,5,7}>\widetilde{Y}_{1,3,5,8}>\widetilde{Y}_{1,3,5,9}>\widetilde{Y}_{1,3,6,9}>\widetilde{Y}_{1,3,7,9}>\widetilde{Y}_{1,4,7,9}>\widetilde{Y}_{1,5,7,9}>\widetilde{Y}_{2,5,7,9}>\widetilde{Y}_{3,5,7,9}
}
\]
with the moving sequence $\bdlambda_{\widetilde{F}}=(\widetilde{\bdlambda_0},\widetilde{\bdlambda_1},\widetilde{\bdlambda_2},\widetilde{\bdlambda_3},\widetilde{\bdlambda_4})=
(\underline{4}, \underline{4, 3}, \underline{3, 2}, \underline{2, 1}, \underline{1})$.

Now, back to the construction. We have the following observations.
\begin{enumerate}[i]
    \item Using \Cref{MaxCli} and \Cref{rmk:moving-seq}, we can verify that the above moving sequence $(\widetilde{\bdlambda_0},\widetilde{\bdlambda_{1}},\dots,\widetilde{\bdlambda_s})$ is allowable. Equivalently, we get a legal maximal clique in the case of $(r+1,c+1,d)$.
    \item \label{step-3-fact-2}
        Notice that the final subsequences coincide: $\widetilde{\bdlambda_i}=\bdlambda_i$ for $d+1\le i\le s$. It follows that
        $\bdalpha_{\widetilde{N}-i}^{\le r}=\bdbeta_{N-i}$ 
        and $\bdalpha_{\widetilde{N}-i}^{r+1}=\widetilde{N}$
        for $i\le \sum_{j=d}^s |\bdlambda_j|$. Here, $|\bdlambda_j|$ is the length of the sequence $\bdlambda_j$.
    \item \label{step-3-fact-3}
    For the positions in the moving sequence of $F$ that induce corner generators, the corresponding positions in the moving sequence of $\widetilde{F}$ still induce corner generators.
    Meanwhile, the initial movement $r+1$ of $\widetilde{\bdlambda_i}$ for $1\le i\le d$ will contribute additional corner generators.
    And then, we have the complete list of corner generators for $\widetilde{F}$.
    \item Furthermore, using the formula established in Step \ref{step-2} as the base argument, by induction on applying the expected inequality $\bdl_{r,c,d}^{revlex}+d\le \bdl_{r+1,c+1,d}^{revlex}$, we will have 
    \begin{align}
        \ell_F&=\bdl_{r,c,d}^{revlex}\ge 
        (r-(2r+d-1-c)-1)(2+d)-2+(2r+d-1-c)d \notag \\
        &=dr-2r-3d+2c-2.
        \label{ineq-1}
    \end{align}
\end{enumerate}

Now, let $\tau_F$ and $\tau_{\widetilde{F}}$ be the tail generators for the maximal clique $F$ and the newly constructed $\widetilde{F}$ respectively.
If $\tau_F\notin G_F$, then no matter whether $\tau_{\widetilde{F}}\in G_{\widetilde{F}}$, we have already $\ell_F+d\le \ell_{\widetilde{F}}$ by the above item \ref{step-3-fact-3}.
Thus, we will assume that $\tau_F\in G_F$ in the following and intend to prove that $\tau_{\widetilde{F}}\in G_{\widetilde{F}}$.
For that purpose, we first claim that 
\begin{equation}
    \deg(\tau_F) \le -1+\sum_{j=d}^s |\bdlambda_j|.
    \label{claim-tail}
\end{equation} 
Otherwise, all corner generators of $F$ will be generated from the first $d-1$ moving sequences. Note that each such subsequence has length at most $r$, and consequently contributes at most $r-1$ corner generators. Whence, with the additional contribution from the tail generator, we have
\[
    \ell_F\le (d-1)(r-1)+1.
\]
But this contradicts the inequality in \eqref{ineq-1} since $c\ge r+d+1$ and $r\ge 3$.

Suppose in addition that the tail generator $\tau_F$ is given by $\bdY^H$ with $H=\Set{Y_{\bdbeta_{\delta_k}}>\cdots >Y_{\bdbeta_N}}$ at the end of \Cref{case:tail} for some $k\le r-1$. Recall that $\delta_k$ was defined in \eqref{eqn:delta}. We can similarly introduce $\widetilde{\delta}_k$ for $\widetilde{F}$ with respect to this $k$ as
\[
    \widetilde{\delta}_k\coloneqq \min\Set{i| \bdalpha_i^{\{k,...,r\}}=\bdalpha_{\widetilde{N}}^{\{k,...,r\}}}.
\]
It follows from the above item \ref{step-3-fact-2} and the inequality \eqref{claim-tail} that $N-\delta_k\ge \widetilde{N}-\widetilde{\delta}_{k}$.
Notice that
$
\bdalpha_{\widetilde{\delta}_{k}-1}^k= \bdalpha_{\widetilde{\delta}_{k}}^k
$ or $\bdalpha_{\widetilde{\delta}_{k}}^k-1$ while
$
\bdbeta_{\delta_k-1}^k=\bdbeta_{\delta_k}^k
$ or $\bdbeta_{\delta_k}^k-1$ by the condition \eqref{condition:move-1}. 
Meanwhile,
\[
    \bdalpha_{\widetilde{\delta}_{k}}^k=\bdbeta_{N-(\widetilde{N}-\widetilde{\delta}_{k})}^k\ge\bdbeta_{\delta_k}^k
\]
by the above item \ref{step-3-fact-2} and the inequalities \eqref{eqn:clique-condition} and \eqref{claim-tail}. Since
$1+(k-1)(d+1) <_d \bdbeta_{\delta_k-1}^k$ by the inequality \eqref{eqn:delta-k}, we will also have $1+(k-1)(d+1) <_d \bdalpha_{\widetilde{\delta}_{k}-1}^k$, unless 
\[
    \bdalpha_{\widetilde{\delta}_{k}-1}^k+1= \bdalpha_{\widetilde{\delta}_{k}}^k
    =\bdbeta_{\delta_k-1}^k=\bdbeta_{\delta_k}^k.
\]
But by the redundancy $-1$ in the inequality \eqref{claim-tail} as well as the fact that
$N-\delta_k\ge \widetilde{N}-\widetilde{\delta}_{k}$ and the above item \ref{step-3-fact-2}, we will have instead
\[
    \bdalpha_{\widetilde{\delta}_k-1}^k=\bdbeta_{N-\widetilde{N}+\widetilde{\delta}_k-1}^k\ge \bdbeta_{\delta_k-1}^k, 
\]
a contradiction.

Now as $1+(k-1)(d+1) <_d \bdalpha_{\widetilde{\delta}_{k}-1}^k$, by the description of the tail generators in \Cref{case:tail}, $\tau_{\widetilde{F}}=\widetilde{Y}^{\widetilde{H}}$ for some $\widetilde{H}=\Set{\widetilde{Y}_{\bdalpha_{\widetilde{\delta}_{k'}}},\dots,\widetilde{Y}_{\bdalpha_{\widetilde{N}}}}$ where $k'\le k$.
Consequently, $\deg(\tau_F)\ge \deg(\tau_{\widetilde{F}})$. 
As $\tau_F$ belongs to the minimal generating set $G_F$ of the colon ideal of $F$, $\tau_F$ won't be canceled out by the corner generators of $F$. By our previous description of the corner generators for $\widetilde{F}$ in item \ref{step-3-fact-3}, $\tau_{\widetilde{F}}$ won't be canceled out by the corner generators of $\widetilde{F}$. Whence, $\tau_{\widetilde{F}}\in G_{\widetilde{F}}$. 
Since in this case, both $F$ and $\widetilde{F}$ have tail generators, we have again shown that $\ell_{F}+d\le \ell_{F'}$ in view of the previous item \ref{step-3-fact-3}. 

Consequently, $\bdl_{r,c,d}^{revlex}+d\le \bdl_{r+1,c+1,d}^{revlex}$, as expected.  

\subsubsection{\textbf{Step} \ref{step-4}}
In this final step, we will construct a maximal clique $F_0$ giving the desired upper bound $\bdl_{r,c,d}^{revlex}$, and show as in the previous section that the projective dimension in mind is exactly this bound. 
As the extremal cases when $c=r+d+1$ and $c=2r+d-1$ have already been shown previously, we will assume that $r+d+1<c<2r+d-1$ here. Whence, $r\ge 4$. 

Write temporarily that $\varepsilon=2r+d-1-c$, $c_0=c-\varepsilon$ and $r_0=r-\varepsilon$. Notice that $c_0=2r_0+d-1$. And as $1\le \varepsilon \le r-3$, $r_0\ge 3$. We start with the special ``maximal'' moving sequence given in Step \ref{step-2}. Then we can apply the argument in Step \ref{step-3} successively to build a ``maximal'' moving sequence as the concatenation of the rows of the $(d+2+\varepsilon)\times r$ diagram in \Cref{fig:intermediate-c}, starting from the top.
\begin{figure}[htb]
    \includegraphics[width=13cm]{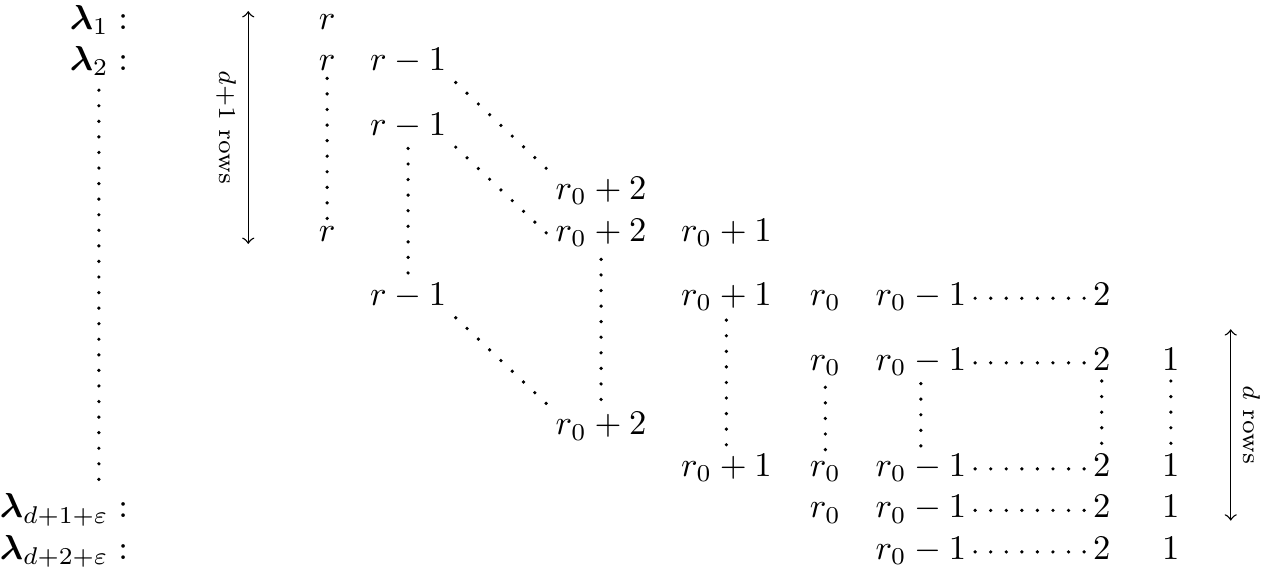}  
    \caption{Moving sequence in the case of $r+d+1<c<2r+d-1$}
    \label{fig:intermediate-c}
\end{figure}

Let $F_0$ be the corresponding maximal clique. Since the special starting maximal clique considered in Step \ref{step-2} has no tail generator in the corresponding minimal generating set, so does $F_0$ here. Otherwise, we would have strict inequality in some intermediate processes when applying the constructions in Step \ref{step-3}. Thus,
\[
    \ell_F > \bdl_{r_0,c_0,d}^{revlex}+ \varepsilon d =\bdl_{r,c,d}^{revlex},
\]
a contradiction.

Furthermore, by the descriptions of the corner generators in Steps \ref{step-2} and \ref{step-3}, it is clear that now in every maximally decreasing moving subsequence of $F_0$, only the final movement fails to contribute a corner generator.

To finish the proof, it remains to verify the corresponding conditions \ref{C1}-\ref{C3} in \Cref{F_0}. 
The condition \ref{C1} is automatic. As for the condition \ref{C2}, like the argument for Step \ref{step-2}, we notice that the first variable and the last variable of $F_0$ belong to $\Ess(F_0)$. Thus, if the maximal clique $F'$ satisfies $\Ess(F')=\Ess(F_0)$, then $F'$ has the same starting variable, final variable, and same corner generators as $F_0$. Consequently, $F'$ has the same moving sequence as $F_0$, making $F'=F_0$.
As for the condition \ref{C3}, again, we will only need to consider the case when $F'$ has the same starting variable as $F_0$, by \Cref{rmk:F_0}. 
\begin{enumerate}[i]
    \item Suppose that $\Ess(F')=\Ess(F_0)\setminus \{Y_{\bdbeta_N}\}$ in \ref{C3}. 
        Now, the final $r_0-1$ movements of $F'$ is just a rearrangement of $(r_0-1,r_0-2,\dots,2,1)$. And the remaining argument will be similar to that at the end of Step \ref{step-2}.
    \item Otherwise, $f$ in \ref{C3} has to be the position after some $k$-th round of movements $\bdlambda_k$ of $F_0$ for some $k<d+2+\varepsilon$. Let $f'$ and $f''$ be the positions in $\Ess(F_0)$ immediately before and after $f$ respectively. Then $f',f''\in \Ess(F')$ as well. Consequently, all the positions between them in $F'$ will contribute corner generators for $F'$, and the corresponding movements will form a strictly decreasing subsequence $\bdlambda'$ of $F'$, which is a rearrangement of the concatenation $\bdlambda_k, \bdlambda_{k+1}$. 
        Let $m$ be the first movement in $\bdlambda_{k+1}$ of $F_0$. It is clear from \Cref{fig:intermediate-c} that
        $m$ will appear twice in $\bdlambda'$, since $r_0\ge 3$. This is a contradiction.
\end{enumerate}

And this completes our construction for Step \ref{step-4}, which in turn finished the proof of \Cref{prop:sharp-bound-inter}.

\subsection{The $c\le r+d$ case}
\label{sec:less-than-r+d}
Here, we consider the regularity in the degenerated case when $r\le c\le r+d$.  We start with the extremal case when $c=r+d$.

\begin{Lemma}
    \label{lem:c=r+d}
    Suppose that $c= r+d$. Then the projective dimension is given by
    \begin{equation*}
        \projdim((\ini(P))^\vee)=(r-1)(d-1). 
    \end{equation*}
\end{Lemma}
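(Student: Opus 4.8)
The plan is to transplant the apparatus of Subsections~\ref{ss-Generators}--\ref{ss:max-length}, built there for $c>r+d$, to the boundary case $c=r+d$, where it degenerates to a transparent count. Here $N=c+(r-1)d=r(d+1)$, and by \eqref{eqn:dim-toric-ring} the analytic spread equals $\dim\calF(I)=rd+1$; so, exactly as in the proof of \Cref{thm:main-reg-balanced}, $\KK[\bdY]/\ini(P)$ is Cohen--Macaulay, its Stanley--Reisner complex is pure, and $(\ini(P))^{\vee}$ has a linear resolution whose minimal generators are the $\widehat{\bdY^F}$ for $F$ a maximal clique of $\calG$, now each of cardinality $rd+1$. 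I would first set down the analogue of \Cref{MaxCli}: writing $F=\{Y_{\bdbeta_1}>\cdots>Y_{\bdbeta_{rd+1}}\}$, conditions \eqref{eqn:clique-condition} and \eqref{condition:move-1} persist unchanged, while \eqref{condition:clique-end} becomes $\bdbeta_1=(1,2+d,\dots,r+(r-1)d)$ (the first ring variable) and $\bdbeta_{rd+1}=(d+1,2+2d,\dots,r+rd)$; equivalently $\bdbeta_{rd+1}^j-\bdbeta_1^j=c-r=d$ for every $j$, so the moving sequence $\bdlambda_F\in\{1,\dots,r\}^{rd}$ carries exactly $d$ copies of each of $1,\dots,r$. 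That the first ring variable is forced I would deduce from the $<_d$-requirement just as in the $<_d$-compatibility analysis used earlier: from a chain all of whose consecutive gaps exceed $d$ by exactly $1$ only the last coordinate may be incremented, and cascading this pins down $\bdbeta_1$ (and symmetrically $\bdbeta_{rd+1}$).

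Since \emph{every} maximal clique starts at the first ring variable, \Cref{case:tail} never applies: for $F'\succ F$ one is always in the case $\bdalpha_1=\bdbeta_1$ of \Cref{case:corner}, so no tail generator ever occurs and every colon ideal $I_F$ is generated by its (linear) corner generators. Hence, using the $lex$ type ordering as in Step~\ref{step-1}, $(\ini(P))^{\vee}$ has linear quotients, and by \cite[Corollary 8.2.2]{MR2724673} and \Cref{lambdaIncrease} one gets $\projdim((\ini(P))^{\vee})=\max_F\ell_F=\max_F|K_F|=rd-\min_F s(F)$, where $s(F)$ counts the maximal strictly increasing runs of $\bdlambda_F$. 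Thus the whole statement reduces to proving $\min_F s(F)=r+d-1$.

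For that I would pass to the combinatorial dictionary: the legal moving sequences are precisely the words $w\in\{1,\dots,r\}^{rd}$ with $d$ copies of each letter such that every prefix $w'$ obeys $|w'|_v\le|w'|_{v+1}$ for $1\le v<r$ --- after the substitution $u_v^{(k)}=\bdbeta_{k+1}^v-\bdbeta_1^v$ the chain inequalities turn into $0\le u_1^{(k)}\le\cdots\le u_r^{(k)}\le d$, which is exactly this lattice-word condition. Grouping ascents by their larger entry yields $\#\{k:w_k<w_{k+1}\}=(r-1)d-1-B(w)$ with $B(w):=\sum_{v=2}^{r}\#\{\text{occurrences of }v\text{ preceded by a letter}\ge v\}$, and $s(w)=rd-\#\{k:w_k<w_{k+1}\}$. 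The heart of the matter is that at most one $v\in\{2,\dots,r\}$ can contribute nothing to $B(w)$: were some $v^{*}<r$ to have all of its occurrences preceded by smaller letters, the lattice condition applied to the prefix ending just before the first $v^{*}$ would force that prefix to contain no letter $<v^{*}$, so all letters before the first $v^{*}$ would exceed $v^{*}$ --- impossible, since $w_1=r\ne v^{*}$ and the letter just before the first $v^{*}$ would be simultaneously $>v^{*}$ and $<v^{*}$. Hence $B(w)\ge r-2$, so $\#\{k:w_k<w_{k+1}\}\le(r-1)(d-1)$ and $s(w)\ge r+d-1$. For the matching bound, the greedy recipe of \Cref{rmk:lex-ordering} builds a maximal clique $F_0$ whose moving sequence is the row-concatenation of an $(r+d-1)\times r$ staircase diagram (the exact analogue of \Cref{fig:c=r+d+1}); it has exactly $r+d-1$ increasing runs, so $\ell_{F_0}=(r-1)(d-1)$, and one checks its legality as in Step~\ref{step-1}.

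Assembling these, $\projdim((\ini(P))^{\vee})=rd-(r+d-1)=(r-1)(d-1)$. The one genuinely delicate step is the combinatorial lower bound $s(w)\ge r+d-1$, equivalently $B(w)\ge r-2$: the naive estimate ``each increasing run has length $\le r$'' only yields $s(w)\ge d+1$, and squeezing out the additional $r-2$ forced breaks is exactly where the ``at most one clean value'' phenomenon --- hence the $<_d$-structure, via the lattice-word condition --- is indispensable.
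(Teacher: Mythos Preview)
Your proof is correct, and its overall architecture---adapting \Cref{MaxCli} to the boundary case, observing that the forced starting variable kills all tail generators so that $(\ini(P))^{\vee}$ has linear quotients, and reducing to computing $\min_F s(F)$---matches the paper exactly. The one substantive difference is how you establish $\min_F s(F)=r+d-1$. The paper simply invokes \Cref{rmk:lex-ordering}, whose swapping argument already shows that the greedy scan produces a clique with the minimal number of increasing runs; it then constructs that greedy clique (the $(r+d-1)\times r$ staircase diagram of \Cref{fig:c=r+d}) and counts. You instead prove the lower bound $s(w)\ge r+d-1$ from scratch via the lattice-word translation and the ``only $v=r$ can be clean'' observation, using the greedy construction only for achievability. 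Your route is more self-contained and makes the combinatorics explicit, but it duplicates work: since you cite \Cref{rmk:lex-ordering} for the construction anyway, you could equally well cite it for optimality and drop the entire $B(w)\ge r-2$ argument, which is what the paper does. That said, your lattice-word bound is a genuinely different (and rather clean) proof of the same fact, and it is correct as written: the key point, that no $v^{*}<r$ can have all occurrences preceded by smaller letters because the prefix before its first occurrence would then be forced (by $|w'|_{v}\le|w'|_{v+1}$) to contain only letters $>v^{*}$, goes through exactly as you say.
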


\begin{proof}
    It is time to go over the key parts stated in the first three subsections earlier.
    Recall that the dimension of $\calF(I)$ is then $rc-r^2+1=rd+1$ instead of $N=c+(r-1)d$ by \Cref{Deformation}. Now,
    the minimal monomial generating set of $\ini(P)^{\vee}$ is still given by \eqref{eqn:max-clique-gen}. However, all the maximal cliques take the form
    \begin{equation*}
        F=\Set{Y_{\bdbeta_1}>\cdots>Y_{\bdbeta_{rd+1}}},
    \end{equation*}
    where
    \[
        \bdbeta_1^j=j+(j-1)d \qquad \text{for $j=1,2,\dots,r$,}
    \]
    and
    \[
        \bdbeta_{rd+1}^j=N-(r-j)(d+1) \qquad \text{for $j=1,2,\dots,r$.}
    \]
    Here, the index $\bdbeta_1$ corresponds to the main diagonal of the leftmost maximal minor of $\bdH_{r,c,d}$, and $\bdbeta_{rd+1}$ corresponds to the rightmost one. Surely we don't have \eqref{condition:clique-end}. As for \eqref{eqn:clique-condition} and \eqref{condition:move-1}, we only need to change the corresponding $N$ into $rd+1$. 

    We will apply the $lex$ type ordering to the minimal generating set $G( (\ini(P))^{\vee})$.
    The description will be completely the same as in \Cref{ss:linear-quotients}.
    Since all maximal cliques have common starting and ending variables, there is no tail generator in any of the colon ideals. The description of the corners is identical to that stated in \Cref{case:corner}. In particular, $(\ini(P))^{\vee}$ has linear quotients, and its projective dimension is achieved exactly by the maximal cardinality of the corners sets.
    Undoubtedly, this maximal length is achieved by applying the strategy in \Cref{rmk:lex-ordering}.

    Due to the explicit description of the starting variable $Y_{\bdbeta_1}$, the moving sequence is then the concatenation of the rows of the $(d+r-1)\times r$ diagram in \Cref{fig:c=r+d}, starting from the top.
    \begin{figure}[htb]
        \includegraphics[width=7.5cm]{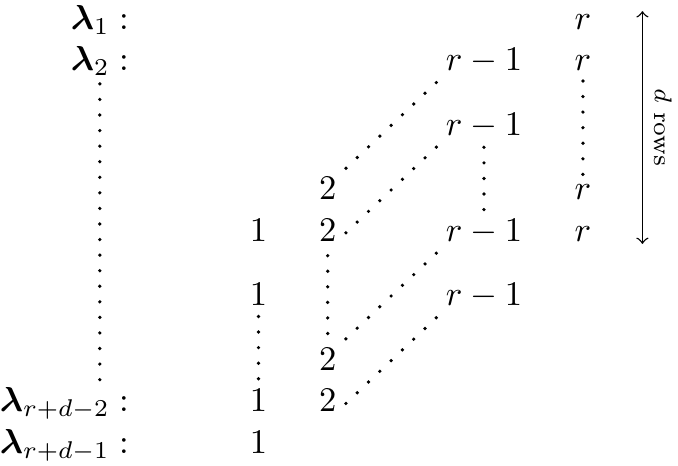} 
        \caption{Moving sequence in the case of $c=r+d$} 
        \label{fig:c=r+d}
    \end{figure}
    It is not difficult to see that it contains  $rd-(r+d-1)=(r-1)(d-1)$ corners. In other words, the expected projective dimension is given by $(r-1)(d-1)$.
\end{proof}

Now, we consider the general case.

\begin{Proposition}
    \label{prop:less-than-r+d}
    Suppose that $r\le c\le r+d$. Then the projective dimension is given by
    \begin{equation*}
        \projdim((\ini(P))^\vee)=
        \begin{cases}
            (r-1)(c-r-1), & \text{if $r<c$},\\
            0, & \text{if $r=c$}.
        \end{cases}
    \end{equation*}
\end{Proposition}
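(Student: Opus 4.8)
The plan is to reduce everything to the already-established extremal case $c=r+d$ (\Cref{lem:c=r+d}), splitting into three subcases according to the size of $c$. When $r=c$ the ideal $I=I_r(\bdH_{r,c,d})$ is principal, so $\calF(I)\cong\KK[I]$ is a polynomial ring in one variable, the defining ideal $P$ is the zero ideal, and hence $(\ini(P))^{\vee}=0$ has projective dimension $0$. When $c=r+d$ there is nothing to prove: the value $(r-1)(d-1)$ supplied by \Cref{lem:c=r+d} is exactly $(r-1)(c-r-1)$. So the only genuine case is $r<c<r+d$.

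For $r<c<r+d$ I would invoke the variable-identification reduction recalled before \cite[Theorem 2.3]{arXiv:1901.01561}, exactly as it is used in the proof of \Cref{Deformation}\,\ref{Deformation-c} and in the proof of \Cref{thm:main-reg-balanced}. Put $d'\coloneqq c-r$, so that $1\le d'<d$ and $c=r+d'$. This reduction identifies certain of the variables $\bdx$ and realizes $I_r(\bdH_{r,c,d})$ as $I_r(\bdH_{r,c,d'})\KK[\bdx]$, where the variables $\bdx'$ of $\KK[\bdH_{r,c,d'}]$ form a subset of $\bdx$. The identification carries the generating maximal minors of $\bdH_{r,c,d}$ bijectively onto those of $\bdH_{r,c,d'}$, and it respects the ambient lexicographic order; consequently the fiber cone $\KK[I]$, its presenting ring $\KK[\bdY]$, the defining ideal $P$, and the initial ideal $\ini(P)$ are literally the same data for the triple $(r,c,d)$ and for the triple $(r,c,d')$. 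Therefore $(\ini(P))^{\vee}$ is unchanged under the reduction, and $\projdim((\ini(P))^{\vee})$ computed for $(r,c,d)$ coincides with its value for $(r,c,d')$.

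Finally, $c=r+d'$ is an instance of the extremal case, so \Cref{lem:c=r+d} applied with $d'$ in place of $d$ gives $\projdim((\ini(P))^{\vee})=(r-1)(d'-1)=(r-1)(c-r-1)$, which is the asserted formula. I do not expect a serious obstacle in this proposition: the substantive combinatorics has all been done in \Cref{lem:c=r+d}. The only point needing attention is the verification that the variable-identification reduction commutes with the formation of the fiber cone's defining ideal and of its initial ideal, so that passing from $(r,c,d)$ to $(r,c,c-r)$ truly preserves $\projdim((\ini(P))^{\vee})$; this is the same compatibility already exploited earlier, and it holds because the reduction is a bijection on the defining minors that is compatible with $>_{\lex}$.
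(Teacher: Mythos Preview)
Your proposal is correct and follows essentially the same route as the paper: handle $r=c$ directly, and for $r<c\le r+d$ use the variable-identification reduction to $d'=c-r$ so that \Cref{lem:c=r+d} applies. The only cosmetic difference is that the paper singles out $c=r+1$ with the one-line observation that all pairs in $\Lambda_{r,d}(N)$ are already sorted (so $P=0$), whereas you absorb $c=r+1$ into the reduction step with $d'=1$; since \Cref{lem:c=r+d} is stated for any positive $d$ and returns $(r-1)(d'-1)=0$ there, your treatment is equally valid.
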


\begin{proof}
    When $r=c$, $I_r(\bdH_{r,c,d})$ is principal of degree $r$.
    And when $c=r+1$, one can check that all pairs of the elements in $\Lambda_{r,d}(N)$ are comparable.
    In both cases, the defining ideal $P=0$ and the regularity of the fiber cone is $0$. 

    When $1<r<r+1<c\le r+d$, by the reduction before \cite[Theorem 2.3]{arXiv:1901.01561}, we can reduce the ideal $I_r(\bdH_{r,c,d})\subset R=\KK[\bdx]$ to some $I_r(\bdH_{r,c,d'})\subset \KK[{\bdx'}]$ with $d'=c-r$.  Here, the collection of variables ${\bdx'}$ is a subset of the original collection of variables $\bdx$, and $I_r(\bdH_{r,c,d'})\KK[\bdx]=I_r(\bdH_{r,c,d})$. Whence, $c=r+d'$ and by \Cref{lem:c=r+d}, the projective dimension is 
    \[
        \projdim((\ini(P'))^\vee)=(r-1)(d'-1),
    \]
    where $P'$ is the defining ideal of $\calF(I_r(\bdH_{r,c,d'}))$ as $P$ for $\calF(I_r(\bdH_{r,c,d}))$.
    Therefore, the original projective dimension is
    \begin{equation*}
        \projdim((\ini(P))^\vee)=(r-1)(c-r-1). 
    \end{equation*}
    when $1<r<c\le c+d$. 
\end{proof}

Therefore, we have completed the proof for our \Cref{thm:main-reg-balanced}.

\begin{Remark}
    Knowing the extremal Betti numbers of $\ini(P)$ amounts to knowing the extremal Betti numbers of $(\ini(P))^{\vee}$, by \cite[Theorem 5.61]{MR2110098}. Since $(\ini(P))^{\vee}$ is known to have linear quotients, the Cohen--Macaulay type of the fiber cone $\calF(\ini(I))$ is simply the top Betti number of $(\ini(P))^{\vee}$. And our approach in this section then paves a road towards handling it. In particular, one can try to characterize when $\calF(\ini(I))$ is Gorenstein. But this has already been done neatly in \cite[Theorem 3.7]{arXiv:1901.01561}, which shows that $\calF(\ini(I))$ is Gorenstein if and only if
    \[
        c\in \Set{r,r+1,r+d,r+d+1,2r+d}
    \]
    when $r\ge 2$ and $d\ge 1$. The paper \cite{arXiv:1901.01561} takes advantage of the Ehrhart ring theory, which is a standard combinatorial tool for handling this type of problem. As a quick corollary, since $\calF(I)$ and $\calF(\ini(I))$ have the same Cohen--Macaulay type, the original fiber cone $\calF(I)$ is Gorenstein if and only if the number of columns $c$ satisfies the same requirement, as mentioned earlier in \Cref{Deformation}.

    It is also worth mentioning that the paper \cite{arXiv:1901.01561} computed some geometric invariant $\delta$ of the associated integral convex polytope. When the corresponding fiber cone $\calF(\ini(I))$ is Gorenstein, this integer $\delta$ is simply $-\bda(\calF(\ini(I)))$ by \cite[Proposition 2.2]{Noma}, where $\bda(\calF(\ini(I)))$ is the \emph{$\bda$-invariant} introduced by Goto and Watanabe in \cite[Definition 3.1.4]{MR494707}. Whence, we can derive the corresponding regularity for free. This is because 
    \begin{equation}
        \bda(A)=\reg(A)-\dim(A)
        \label{def:a-inv}
    \end{equation}
    for any standard graded Cohen--Macaulay algebra $A$ over $\KK$, in view of the equivalent definition of regularity in \cite[Definitions 1 and 3]{MR676563}.
    It is not surprising that the outcome agrees with our formula in \Cref{thm:main-reg-balanced} for these Gorenstein cases.
\end{Remark}

We end this section with a quick application. It is also due to the following fact.

\begin{Lemma}
    [{\cite[Proposition 6.6]{CNPY} or \cite[Proposition 1.2]{MR3864202}}]
    \label{CNPY:6.6}
    Let $I \subset R = \KK[x_1,\dots,x_N]$ be a homogeneous ideal that is generated in one degree, say $d$. Assume that the fiber cone $\calF(I)$ is Cohen--Macaulay. Then each minimal reduction of $I$ is generated by $\dim(\calF(I))$ homogeneous polynomials of degree $d$, and $I$ has the reduction number $\r(I) = \reg(\calF(I))$.
\end{Lemma}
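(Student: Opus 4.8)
Since \Cref{CNPY:6.6} is quoted verbatim from \cite{CNPY} and \cite{MR3864202}, the plan is to recall its standard proof. One may assume the residue field $\KK$ is infinite; otherwise replace $\KK$ by $\KK(t)$, which changes neither $\reg(\calF(I))$, nor $\r(I)$, nor the Cohen--Macaulayness of $\calF(I)$, all of these being insensitive to faithfully flat base change. Because $I$ is generated in the single degree $d$, we have $\calF(I)\cong\KK[I_d]$ as a standard graded $\KK$-algebra, with $\calF(I)_1$ identified with the $\KK$-vector space $I_d$; write $\ell\coloneqq\dim\calF(I)$, the analytic spread of $I$.

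\textbf{Step 1: the minimal reductions.} For a general choice of $\ell$ elements of $\calF(I)_1=I_d$, their span gives a linear homogeneous system of parameters of $\calF(I)$, so degree-$d$ forms $f_1,\dots,f_\ell\in I$ lifting them generate an $\ell$-generated minimal reduction $J$ of $I$. Conversely, any minimal reduction is minimally generated by exactly $\ell(I)=\ell$ elements, and under the Cohen--Macaulay hypothesis on $\calF(I)$ the images of such a generating set are again forced to be a linear homogeneous system of parameters; this pins the reduction down to being generated by $\ell$ forms of degree $d$. (This implication is precisely the first assertion of the cited statement.)

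\textbf{Step 2: reduction number as a top nonvanishing degree.} Fix $J=(f_1,\dots,f_\ell)$ as above with images $\bar f_1,\dots,\bar f_\ell$ a linear homogeneous system of parameters of $\calF(I)$. As $\calF(I)$ is Cohen--Macaulay, $\bar f_1,\dots,\bar f_\ell$ is a regular sequence, so the Artinian reduction $\bar A\coloneqq\calF(I)/(\bar f_1,\dots,\bar f_\ell)\calF(I)$ has Hilbert series equal to that of $\calF(I)$ multiplied by $(1-t)^{\ell}$, independently of the chosen linear homogeneous system of parameters, and $\reg(\calF(I))=\reg(\bar A)=\max\{j:\bar A_j\neq 0\}$. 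On the other hand, graded Nakayama for the irrelevant maximal ideal $\mm$ of $R$ gives, for every $k\ge 0$,
\[
I^{k+1}=JI^{k}\iff I^{k+1}=JI^{k}+\mm I^{k+1}\iff \calF(I)_{k+1}=(\bar f_1,\dots,\bar f_\ell)\,\calF(I)_{k}\iff \bar A_{k+1}=0 .
\]
Since the condition $I^{k+1}=JI^{k}$ is stable under increasing $k$, the set of $k$ with $\bar A_{k+1}=0$ is upward closed, so $\r_J(I)=\min\{k\ge 0:\bar A_{k+1}=0\}=\max\{j:\bar A_j\neq 0\}=\reg(\calF(I))$.

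\textbf{Conclusion.} By Step 1 every minimal reduction $J$ has its generators' images forming a regular sequence on the Cohen--Macaulay ring $\calF(I)$, so the computation of Step 2 applies verbatim to each such $J$ and yields $\r_J(I)=\reg(\calF(I))$; in particular $\r(I)=\reg(\calF(I))$. I expect the most delicate point to be the second half of Step 1 --- that \emph{every} minimal reduction, not merely a generic one, is homogeneous and generated in degree $d$ --- which is exactly where the Cohen--Macaulay hypothesis on $\calF(I)$ enters essentially; the rest is the standard dictionary among fiber cones, Artinian reductions, and regularity. As the full argument is already in \cite{CNPY} and \cite{MR3864202}, the paper simply quotes it.
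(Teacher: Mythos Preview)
Your proposal is correct: the paper does not give its own proof of this lemma but simply quotes it from \cite{CNPY} and \cite{MR3864202}, exactly as you anticipated in your final sentence. The argument you supply is the standard one underlying those references, so there is nothing to compare.
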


\begin{Corollary} 
    \label{reduction}
    The reduction numbers of the ideal $I_r(\bdH_{r,c,d})$ and its initial ideal $\ini(I_r(\bdH_{r,c,d}))$ are given by
    \[
        \r(I_r(\bdH_{r,c,d}))=
        \r(\ini(I_r(\bdH_{r,c,d})))=
        \begin{cases}
            N-1-\floor{(N-1)/r}, & \text{if $2r+d\le c$},\\
            dr-2r-3d+2c-2, & \text{if $r+d< c< 2r+d$},\\
            (r-1)(c-r-1), & \text{if $r< c\le r+d$},\\
            0, & \text{if $r=c$},
        \end{cases}
    \]
    where $N=c+(r-1)d$. And the $\bda$-invariants of $\mathcal{F}(I_r(\bdH_{r,c,d}))$ and  $\mathcal{F}(\ini (I_r(\bdH_{r,c,d})))$ are given by  
    \[
        \bda(\mathcal{F}(I_r(\bdH_{r,c,d})))=
         \bda(\mathcal{F}(\ini I_r(\bdH_{r,c,d})))=
        \begin{cases}
            -1-\floor{(N-1)/r}, & \text{if $2r+d\le c$},\\
            c-2r-2d-2, & \text{if $r+d< c< 2r+d$},\\
            -c, & \text{if $r< c\le r+d$},\\
            -1, & \text{if $r=c$}.
        \end{cases}
    \]
\end{Corollary}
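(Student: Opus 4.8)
The plan is to derive both formulas as immediate consequences of the results already established in \Cref{thm:main-reg-balanced}, \Cref{Deformation}, and \Cref{CNPY:6.6}. First I would note that $I_r(\bdH_{r,c,d})$ is generated in the single degree $r$, its generators being the maximal minors of $\bdH_{r,c,d}$; likewise $\ini(I_r(\bdH_{r,c,d}))$ is generated in degree $r$ by the set $G_{r,c,d}$ of squarefree monomials attached to $<_d$-chains. By \Cref{Deformation} \ref{Deformation-a} the fiber cone $\calF(I_r(\bdH_{r,c,d}))$ is Cohen--Macaulay, and, as noted in the proof of \Cref{thm:main-reg-balanced}, the Sagbi degeneration transfers this property to $\calF(\ini(I_r(\bdH_{r,c,d})))\cong \KK[\bdY]/\ini(P)$. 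Hence \Cref{CNPY:6.6} applies verbatim to each of the two ideals, yielding $\r(I_r(\bdH_{r,c,d}))=\reg(\calF(I_r(\bdH_{r,c,d})))$ and $\r(\ini(I_r(\bdH_{r,c,d})))=\reg(\calF(\ini(I_r(\bdH_{r,c,d}))))$. These two regularities agree and are computed explicitly in \Cref{thm:main-reg-balanced}; substituting that four-case formula produces the stated expression for the reduction numbers.

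For the $\bda$-invariants, I would invoke the identity $\bda(A)=\reg(A)-\dim(A)$ recorded in \eqref{def:a-inv}, valid for any standard graded Cohen--Macaulay $\KK$-algebra $A$. Applying it to $A=\calF(I_r(\bdH_{r,c,d}))$ and to $A=\calF(\ini(I_r(\bdH_{r,c,d})))$ --- both Cohen--Macaulay, and both of the same dimension since passing to an initial algebra preserves the Hilbert function --- reduces the problem to subtracting the analytic spread from the regularity. The analytic spread is furnished by \Cref{Deformation} \ref{Deformation-c} and equation \eqref{eqn:dim-toric-ring}: it equals $N=c+(r-1)d$ when $r+d<c$, equals $rc-r^2+1$ when $r<c\le r+d$, and equals $1$ when $r=c$. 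Combining this with the regularity formula from \Cref{thm:main-reg-balanced}, a routine simplification in each of the four ranges $2r+d\le c$, $r+d<c<2r+d$, $r<c\le r+d$, and $r=c$ gives $-1-\floor{(N-1)/r}$, $c-2r-2d-2$, $-c$, and $-1$ respectively.

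Because the statement is a corollary in the strict sense, there is no genuine obstacle; the only spot calling for a little care is the arithmetic of $\reg-\dim$ in the intermediate range, where one checks $(dr-2r-3d+2c-2)-(c+(r-1)d)=c-2r-2d-2$, and in the degenerate range $r<c\le r+d$, where $(r-1)(c-r-1)-(rc-r^2+1)$ must be expanded to $-c$. I would also flag explicitly that in the subrange $r<c<r+d$ the relevant dimension is $rc-r^2+1$, not $N$, in keeping with the reduction to $d'=c-r$ employed in \Cref{Deformation} and in the proof of \Cref{thm:main-reg-balanced}; this is what makes the single closed formula for the $\bda$-invariant consistent across the whole range $r<c\le r+d$.
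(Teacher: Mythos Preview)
Your approach is correct and essentially the same as the paper's: both combine \Cref{CNPY:6.6}, \Cref{thm:main-reg-balanced}, \Cref{Deformation}\ref{Deformation-c}, and \eqref{def:a-inv}, with the Cohen--Macaulayness of both fiber cones supplying the needed hypotheses. One small inaccuracy to fix: you write $\calF(\ini(I_r(\bdH_{r,c,d})))\cong \KK[\bdY]/\ini(P)$, but in fact $\calF(\ini(I))\cong \KK[\bdY]/P'$ with $P'$ the toric presentation ideal; the correct route (as in the paper, via \Cref{Fiber-Sagbi} and the squarefree-initial transfer) is that $\KK[\bdY]/P$, $\KK[\bdY]/\ini(P)=\KK[\bdY]/\ini(P')$, and $\KK[\bdY]/P'$ all share the Cohen--Macaulay property, which is what you need.
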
 

\begin{proof}
    By \Cref{Fiber-Sagbi}, \Cref{Deformation} \ref{Deformation-a} and  \cite[Corollary 3.4]{MR4019342}, the initial algebra of the fiber cone $\mathcal{F}(I_r(\bdH_{r,c,d}))$ is the fiber cone $\mathcal{F}(\ini (I_r(\bdH_{r,c,d})))$, and these two algebras are both Cohen--Macaulay. Now, it suffices to apply \cite[Corollary 2.5]{Sagbi}, \Cref{Deformation} \ref{Deformation-c}, \Cref{thm:main-reg-balanced}, \Cref{CNPY:6.6} and Equation \eqref{def:a-inv}.
\end{proof}

\begin{Remark}
    The initial ideal $\ini(I_r(\bdH_{r,c,d}))$ is also considered as the $(d+1)$-spread Veronese ideal of degree $r$ in \cite{MR4019342}. This concept was later generalized to the class of $c$-bounded $t$-spread Veronese ideals $I_{c,(n,d,t)}$ and the class of Veronese ideals of bounded support $I_{(n,d,t),k}$ in \cite{arXiv:2005.09601}. The regularity of the particular fiber cone $\KK[I_{(n,d,0),k}]$ was computed in \cite[Proposition 5.6]{arXiv:2005.09601}, which has a similar flavor as that in our \Cref{thm-main-result}. It is then natural to ask for the regularity of the fiber cone of other ideals considered in \cite{arXiv:2005.09601}.
\end{Remark}

\begin{acknowledgment*}
    The authors are grateful to the software system \texttt{Macaulay2} \cite{M2}, for serving as an excellent source of inspiration.
    The second author is partially supported by the ``Anhui Initiative in Quantum Information Technologies'' (No.~AHY150200) and the ``Fundamental Research Funds for the Central Universities''.
\end{acknowledgment*}

\bibliography{Regularity}

\end{document}